\renewcommand\ge\geqslant
\renewcommand\geq\geqslant
\renewcommand\le\leqslant
\renewcommand\leq\leqslant
\theoremstyle{plain}
\newtheorem{theorem}{Theorem}[section]
\newtheorem{proposition}[theorem]{Proposition}
\newtheorem{lemma}[theorem]{Lemma}
\theoremstyle{remark}
\newtheorem*{remark*}{Remark}
\numberwithin{equation}{section}
\newcommand{\refpart}[1]{({\it #1\/})}
\newcommand{\defeq}{\mathrel{:=}}
\newcommand{\EQN}{E}
\newcommand{\mathrak}[1]{#1} 
\newcommand{\ppt}{\hspace{1pt}} 
\DeclareMathOperator{\Hn}{Hn}
\newcommand{\hpgo}[2]{{}_{#1}{\rm F}_{#2}}
\newcommand{\hpgoppa}[5]{{}_{#1}{\rm F}_{#2}(#3;#4\,|\,#5)}
\newcommand{\hpg}[5]{{}_{#1}{\rm F}_{#2}\!\left(\!\begin{array}{c|}#3\\#4\end{array}\begin{array}{c}#5\end{array}\!\!\right)}
\newcommand{\led}[1]{#1}	
\newcommand{\hpgde}[1]{E(#1)}	
\newcommand{\heunde}[1]{\text{\it HE\ppt}(#1)} 
\DeclareMathOperator{\ord}{ord}
\newcommand{\branch}[3]{#1\,=\,#2\,=\,#3} 
\newcommand{\brep}[2]{$[#1]_{#2}$} 
\newcommand{\comp}[2]{${#2}\cdot{#1}$}
\newcommand{\compp}[3]{${#3}\cdot{#2}\cdot{#1}$}
\newcommand{\nocomposition}{\text{\rm indecomposable}}
\newcommand{\PH}[1]{\text{$H_{#1}$}}
\newcommand{\PHthirtynine}{\PH{39}}
\newcommand{\PHforty}{\PH{40}}
\newcommand{\PHfortyone}{\PH{41}}
\newcommand{\PHfortytwo}{\PH{42}}
\newcommand{\PHfortythree}{\PH{43}}
\newcommand{\PHfortyfour}{\PH{44}}
\newcommand{\PHfortyfive}{\PH{45}}
\newcommand{\PHfortysix}{\PH{46}}
\newcommand{\PHfortyseven}{\PH{47}}
\newcommand{\PHfortyeight}{\PH{48}}
\newcommand{\PN}[1]{\text{$N_{#1}$}} 
\newcommand{\FF}[1]{$F_{#1}$}        
\newcommand{\FE}[1]{$F^{\prime}_{#1}$} 
\newcommand{\FT}[1]{$F^{\prime\prime}_{#1}$} 
\newcommand{\FY}[1]{$F^{*}_{#1}$} 
\newcommand{\GT}[1]{#1}       
\newcommand{\HT}[1]{{#1}_H}   
\newcommand{\CT}[1]{{#1}_C}   
\newcommand{\AT}[1]{{#1}_A}   
\newcommand{\BT}[1]{{#1}_B}   
\newcommand{\DDT}{2\!\times\!2}  
\newcommand{\pback}[1]{\stackrel{\ppt#1}{\longleftarrow}}   
\newcommand{\CC}{\mathbb{C}}
\newcommand{\PP}{\mathbb{P}}
\newcommand{\QQ}{\mathbb{Q}}
\newcommand{\RR}{\mathbb{R}}
\newcommand{\ZZ}{\mathbb{Z}}
\newcommand{\fr}[2]{#1/#2}
\begin{document}

\title[Coverings yielding Heun-to-hypergeometric reductions]{A
  classification of coverings yielding\\ Heun-to-hypergeometric reductions}

\author{Raimundas Vidunas}
\address{Faculty of Mathematics, Kobe University, Rokko-dai 1-1, Nada-ku, 657-8501 Kobe, Japan}
\email{vidunas@math.kobe-u.ac.jp}
\urladdr{www.math.kobe-u.ac.jp/\~{}vidunas}

\author{Galina Filipuk}
\address{Faculty of Mathematics, Informatics, and Mechanics, University of
Warsaw, Banacha 2, 02-097 Warsaw, Poland
657-8501 Kobe, Japan}
\email{filipuk@mimuw.edu.pl}
\urladdr{http://www.mimuw.edu.pl/\~{}filipuk}

\begin{abstract}
  Pull-back transformations between Heun and Gauss hypergeometric equations 
  give useful expressions of Heun functions in terms of better understood
  hypergeometric functions. This article classifies, up~to M\"obius automorphisms,
  the coverings $\PP^1\to\PP^1$ that yield pull-back transformations 
  from hypergeometric to Heun equations with at~least
  one free parameter (excluding
  the cases when the involved hypergeometric equation has cyclic or dihedral monodromy).
  In all, $61$ parametric hypergeometric-to-Heun transformations are found,
  of maximal degree 12. Among them, $28$ pull-backs 
  are compositions of smaller degree transformations between hypergeometric 
  and Heun functions. The $61$ transformations are realized by $48$ different Belyi coverings 
  (though $2$ coverings should be counted twice as their moduli field is quadratic). 
  The same Belyi coverings appear in several other contexts. For example,
  38 of the coverings appear in Herfutner's list of elliptic surfaces over $\PP^1$ 
  with four singular fibers, as their $j$-invariants. In passing, 
  we demonstrate an elegant way to show that there are no coverings $\PP^1\to\PP^1$
  with some branching patterns. 
\end{abstract}


\maketitle

\section{Context and overview}
\label{sec:intro}

The Gauss hypergeometric equation
\begin{equation}
\label{eq:GHE}
\frac{{\rm d}^2y(z)}{{\rm d}z^2}+
\left(\frac{C}{z}+\frac{A+B-C+1}{z-1}\right)\,\frac{{\rm d}y(z)}{{\rm d}z}+\frac{AB}{z\,(z-1)}\,y(z)=0
\end{equation}
and the Heun equation
\begin{equation}
\label{eq:HE}
\frac{{\rm d}^2Y(x)}{{\rm
d}x^2}+\biggl(\frac{c}{x}+\frac{d}{x-1}+\frac{a+b-c-d+1}{x-t}\biggr)\frac{{\rm
d}Y(x)}{{\rm d}x}+\frac{ab\,x-q}{x(x-1)(x-t)} Y(x)=0
\end{equation}
are canonical second-order Fuchsian differential equations on the Riemann sphere~$\PP^1$, 
with $3$ and~$4$ regular singularities, respectively.
Transformations among these equations give identities between their standard 
hypergeometric and Heun solutions. For example, 
there is a single covering $\PP^1\to\PP^1$ of degree 2 (up to M\"obius transformations).
It induces the classical quadratic transformations of hypergeometric functions, such as
\begin{equation}
  \hpg{2}{1}{2A,\,2B\,}{A+B+\tfrac12}{\,x}
  =\hpg{2}{1}{A,\,B\,}{A+B+\tfrac12}{\,4x(1-x)}. 
\end{equation}
Moreover, the same covering induces the well-known Heun-to-Heun quadratic 
transformation \cite[Thm.~4.1]{Maier14}, and an identification of the general
$\hpgoppa{2}{1}{A,B}{C}{4x(1-x)}$ function with a standard local solution  
of Heun's equation 
with the parameters 
$(t,q,a,b,c,d)=\left(\tfrac12,2AB,2A,2B,C,C\right)$. 
These transformations are {\em parametric}, since they have
at least one free parameter such as $A,B$. 

The aim of this paper is classification of all parametric pull-back transformations
between hypergeometric and Heun functions. 
The considered pull-back transformations are of the form
\begin{equation}
\label{eq:algtransf}
z\longmapsto\varphi(x), \qquad y(z)\longmapsto
Y(x)=\theta(x)\,y(\varphi(x)),
\end{equation}
where $\varphi(x)$ is a rational function and $\theta(x)$ is a radical
function, i.e., a product of powers of rational functions.  Geometrically,
transformation~(\ref{eq:algtransf}) \emph{lifts} or \emph{pulls back} a
Fuchsian equation on the curve~$\PP^1_z$ to one on the curve~$\PP^1_x$,
along the covering $\varphi\colon\PP^1_x\to\PP^1_z$.
The {\em gauge prefactor}~$\theta(x)$ is usually chosen 
such that the pulled-back equation has fewer singularities and 
canonical values of some local exponents.

Pull-back transformations between Gauss hypergeometric equations were recently classified 
by Vidunas \cite{Vidunas2009}. Next to the classical quadratic, cubic and Goursat \cite{Goursat1881} 
transformations, a few sets of unpredicted transformations were found, including parametric 
transformations from hypergeometric equations with cyclic or dihedral monodromy.
Moreover, the hypergeometric-to-Heun transformations without the prefactor $\theta(x)$ have been
classified by Maier~\cite{Maier03}. In both classifications, the heart of the problem is
determining the covering maps~$\varphi(x)$ that can appear.  
They are typically  \emph{Belyi maps}, in the sense that 
(apart from dull exceptions of Proposition \ref{prop:nonbelyi} here)
they have at~most $3$ critical values on the Riemann sphere~$\PP_z^1$.  
In~fact, the critical values of those $\varphi(x)$  are typically the 
singular points $z=0$, $z=1$, $z=\infty$ of the hypergeometric equation,
and the branching points include the singularities $x=0$, $x=1$, $x=\infty$
(and $x=t$) of the pulled-back hypergeometric (or Heun) equation. 
The approaches of~\cite{Maier03,Vidunas2009} 
include:
\begin{itemize}
\item[\refpart{i}] determining the \emph{branching patterns} that $\varphi$~can have;
\item[\refpart{ii}] determining which of those patterns can be \emph{realized} by a rational 
function $\varphi(x)$;
\item[\refpart{iii}] normalizing the points $x=0$, $x=1$, $x=\infty$ of $\varphi(x)$,
and deriving identities between hypergeometric and Heun functions 
by identifying corresponding local solutions of thereby related differential equations.
\end{itemize}
This article follows this strategy and the techniques of \cite{Vidunas2009} to
generate a complete list of coverings~$\varphi$ that can appear in parametric
Heun-to-hypergeometric reductions. 
We find 61 different transformations (excluding infinite families of pull-backs 
from hypergeometric equations with cyclic or dihedral monodromy \cite{VidunasHDD}), 
realized by 48 different Belyi coverings.
An explicit formula for each covering is given in Table~\ref{tab:coverings}.
The Belyi maps are not normalized for Step \refpart{iii}. The induced identities
between hypergeometric and Heun functions are thoroughly examined 
in the parallel article \cite{Vidunas2009b}. 
Here we not concerned with the technical issues of determining the prefactor $\theta(x)$,
identifying local solutions, symmetries of the hypergeometric and Heun equations,
nor even introducing Heun functions.

By the Grothendieck correspondence~\cite{Schneps94} any Belyi map 
$\varphi\colon\PP^1_x\to\PP^1_z$ corresponds bijectively to 
a \emph{dessin d'enfant} on $\PP^1_x$, up~to M\"obius isomorphisms 
of both Riemann spheres $\PP^1_z$, $\PP^1_x$. 
Generally, the dessins 
are defined combinatorially as certain bicolored graphs.
For our purposes, the \emph{dessins d'enfant} of a Belyi map $\varphi(x)$ is the graph on $\PP^1_x$ 
obtained as the pre-image of the line segment~$[0,1]$ 
on $\PP_z^1$, up to isotopy. 
The vertices above $z=0$ are colored black, and the vertices above $z=1$ are colored white.
The order of each vertex is equal to the branching order at the corresponding $x$-point.
Figure \ref{fg:dessins} depicts the dessins 
for all 48 encountered Belyi coverings. 
Most of the white points have order 2, and then they are not depicted. Black points of order 3 or 4
are not depicted either, unless they are connected to a white point of order 1. 
A thin edge connects a pair of displayed black and white vertices. 
A thick edge connects two black points (either displayed or clearly branching) 
with an implicit white point  somewhere in the middle. 
Each {\em cell} (i.e., a two-dimensional connected component 
of the complement on $\PP_x^1$, possibly the outer one) represents a point above $z=\infty$. 
The branching order of each cell is determined by counting the number of black points met 
while tracing a loop along its boundary.


\begin{figure}
\[ \begin{picture}(340,348)\thicklines\linethickness{1pt}
\put(0,338){$H_{1}$} \put(37,316){\line(-1,0){22}} \put(15,316){\line(1,0){22}} \qbezier(37,316)(51,302)(51,316) \qbezier(37,316)(51,330)(51,316) \qbezier(15,316)(1,330)(1,316) \qbezier(15,316)(1,302)(1,316) \put(26,316){\line(0,1){11}} \qbezier(26,327)(40,341)(26,341) \qbezier(26,327)(12,341)(26,341) 
\put(61,338){$H_{2}$} \qbezier(74,325)(60,311)(60,325) \qbezier(74,325)(60,339)(60,325) \qbezier(110,325)(124,311)(124,325) \qbezier(110,325)(124,339)(124,325) \put(74,325){\line(1,0){10}} \qbezier(84,325)(92,341)(100,325) \qbezier(84,325)(92,309)(100,325) \put(100,325){\line(1,0){10}} 
\put(132,338){$H_{3}$} \qbezier(164,316)(178,302)(178,316) \qbezier(164,316)(178,330)(178,316) \put(164,316){\line(-1,0){10}} \put(154,316){\line(-1,-1){13}} \put(154,316){\line(-1,1){13}} \put(141,329){\line(0,-1){26}} \qbezier(141,303)(123,316)(141,329) 
\put(185,338){$H_{4}$} \qbezier(200,325)(186,311)(186,325) \qbezier(200,325)(186,339)(186,325) \put(200,325){\line(1,0){10}} \qbezier(210,325)(225,355)(240,325) \qbezier(210,325)(225,295)(240,325) \put(240,325){\line(-1,0){8}} \qbezier(232,325)(220,313)(220,325) \qbezier(232,325)(220,337)(220,325) 
\put(250,338){$H_{5}$} \qbezier(255,303)(242,317)(255,331) \qbezier(255,303)(268,317)(255,331) \qbezier(283,331)(296,317)(283,303) \qbezier(283,331)(270,317)(283,303) \put(283,303){\line(-1,0){28}} \put(255,331){\line(1,0){28}} 
\put(297,338){$H_{6}$} \put(317,338){\line(0,-1){21}} \put(317,317){\line(3,-2){21}} \put(317,317){\line(-3,-2){21}} \put(296,303){\line(1,0){42}} \put(338,303){\line(-3,5){21}} \put(296,303){\line(3,5){21}} 
\put(0,287){$H_{7}$} \put(37,267){\line(-1,0){22}} \put(15,267){\line(1,0){22}} \qbezier(37,267)(52,252)(52,267) \qbezier(37,267)(52,282)(52,267) \qbezier(15,267)(0,282)(0,267) \qbezier(15,267)(0,252)(0,267) \put(26,267){\line(0,1){15}} \put(26,282){\circle*3} 
\put(59,287){$H_{8}$} \qbezier(101,274)(115,260)(115,274) \qbezier(101,274)(115,288)(115,274) \put(101,274){\line(-1,0){10}} \qbezier(91,274)(82,292)(73,274) \qbezier(91,274)(82,256)(73,274) \put(73,274){\line(-1,0){12}} \put(61,274){\circle*3} 
\put(126,287){$H_{9}$} \qbezier(158,267)(172,253)(172,267) \qbezier(158,267)(172,281)(172,267) \put(158,267){\line(-1,0){10}} \qbezier(148,267)(136,291)(124,267) \qbezier(148,267)(136,243)(124,267) \put(124,267){\line(1,0){12}} \put(136,267){\circle*3} 
\put(179,287){$H_{10}$} \put(181,274){\circle*3} \put(181,274){\line(1,0){12}} \put(193,274){\line(1,-1){13}} \put(193,274){\line(1,1){13}} \put(206,287){\line(0,-1){26}} \qbezier(206,261)(224,274)(206,287) 
\put(225,287){$H_{11}$} \put(262,267){\line(-1,0){22}} \put(240,267){\line(1,0){22}} \qbezier(262,267)(277,252)(277,267) \qbezier(262,267)(277,282)(277,267) \qbezier(240,267)(225,282)(225,267) \qbezier(240,267)(225,252)(225,267) \put(251,267){\circle*3} {\thinlines\put(251,267){\line(0,1){14}} \put(251,282){\circle3} }
\put(285,287){$H_{12}$} \qbezier(327,273)(341,259)(341,273) \qbezier(327,273)(341,287)(341,273) \put(327,273){\line(-1,0){10}} \qbezier(317,273)(308,291)(299,273) \qbezier(317,273)(308,255)(299,273) \put(299,273){\circle*3} {\thinlines\put(299,273){\line(-1,0){11}} \put(287,273){\circle3} }
\put(0,237){$H_{13}$} \qbezier(17,222)(1,206)(1,222) \qbezier(17,222)(1,238)(1,222) \put(17,222){\line(1,0){12}} \qbezier(29,222)(43,250)(57,222) \qbezier(29,222)(43,194)(57,222) \put(57,222){\circle*3} {\thinlines\put(57,222){\line(-1,0){12}} \put(44,222){\circle3} }
\put(70,237){$H_{14}$} {\thinlines\put(70,222){\circle3} \put(71,222){\line(1,0){13}} }\put(84,222){\circle*3} \put(84,222){\line(1,-1){15}} \put(84,222){\line(1,1){15}} \put(99,237){\line(0,-1){30}} \qbezier(99,207)(119,222)(99,237) 
\put(126,237){$H_{15}$} \put(151,222){\circle*3} \put(162,222){\line(-1,0){22}} \put(140,222){\line(1,0){22}} \qbezier(162,222)(179,205)(179,222) \qbezier(162,222)(179,239)(179,222) \qbezier(140,222)(123,239)(123,222) \qbezier(140,222)(123,205)(123,222) 
\put(193,237){$H_{16}$} \qbezier(210,222)(194,206)(194,222) \qbezier(210,222)(194,238)(194,222) \put(210,222){\line(1,0){12}} \qbezier(222,222)(234,246)(246,222) \qbezier(222,222)(234,198)(246,222) \put(246,222){\circle*3} 
\put(252,237){$H_{17}$} \put(262,222){\circle*3} \put(262,222){\line(1,-1){15}} \put(262,222){\line(1,1){15}} \put(277,237){\line(0,-1){30}} \qbezier(277,207)(297,222)(277,237) 
\put(299,237){$H_{18}$} \qbezier(316,222)(300,206)(300,222) \qbezier(316,222)(300,238)(300,222) \put(316,222){\line(1,0){13}} \put(329,222){\line(1,-1){11}} \put(329,222){\line(1,1){11}} \put(340,233){\circle*3} \put(340,211){\circle*3} 
\put(0,187){$H_{19}$} \put(3,156){\circle*3} \put(35,188){\circle*3} \put(35,188){\line(-1,-1){10}} \put(3,156){\line(1,1){10}} \qbezier(13,166)(8,183)(25,178) \qbezier(13,166)(30,161)(25,178) 
\put(46,187){$H_{20}$} \put(50,172){\circle*3} \put(50,172){\line(1,0){14}} \qbezier(64,172)(78,200)(92,172) \qbezier(64,172)(78,144)(92,172) \put(92,172){\line(-1,0){14}} \put(78,172){\circle*3} 
\put(105,187){$H_{21}$} \qbezier(122,172)(106,156)(106,172) \qbezier(122,172)(106,188)(106,172) \put(122,172){\line(1,0){13}} \put(135,172){\circle*3} \put(135,172){\line(1,1){11}} \put(146,183){\circle*3} {\thinlines\put(135,172){\line(1,-1){10}} \put(146,161){\circle3} }
\put(160,187){$H_{22}$} \put(163,156){\circle*3} \put(163,156){\line(1,1){10}} \qbezier(173,166)(168,183)(185,178) \qbezier(173,166)(190,161)(185,178) \put(185,178){\circle*3} {\thinlines\put(185,178){\line(1,1){10}} \put(196,189){\circle3} }
\put(209,187){$H_{23}$} \put(213,172){\circle*3} \put(213,172){\line(1,0){14}} \qbezier(227,172)(241,200)(255,172) \qbezier(227,172)(241,144)(255,172) \put(255,172){\circle*3} {\thinlines\put(255,172){\line(-1,0){12}} \put(242,172){\circle3} }
\put(268,187){$H_{24}$} \qbezier(281,169)(258,170)(270,158) \qbezier(281,169)(282,146)(270,158) \put(281,169){\line(1,1){19}} \put(300,188){\circle*3} \put(290,178){\circle*3} 
\put(310,187){$H_{25}$} \put(312,158){\circle*3} \qbezier(312,158)(306,182)(330,176) \qbezier(312,158)(336,152)(330,176) \put(330,176){\line(1,1){12}} \put(342,188){\circle*3} 
\put(0,137){$H_{26}$} \qbezier(17,122)(1,106)(1,122) \qbezier(17,122)(1,138)(1,122) \put(17,122){\line(1,0){13}} \put(30,122){\circle*3} {\thinlines\put(30,122){\line(1,-1){10}} \put(30,122){\line(1,1){10}} \put(41,133){\circle3} \put(41,111){\circle3} }
\put(53,137){$H_{27}$} \put(64,116){\circle*3} \put(76,128){\circle*3} \qbezier(64,116)(59,133)(76,128) \qbezier(64,116)(81,111)(76,128) \put(76,128){\circle*3} {\thinlines\put(64,116){\line(-1,-1){10}} \put(76,128){\line(1,1){10}} \put(87,139){\circle3} \put(53,105){\circle3} }
\put(99,137){$H_{28}$} \put(113,122){\circle*3} \qbezier(113,122)(127,150)(141,122) \qbezier(113,122)(127,94)(141,122) \put(141,122){\circle*3} {\thinlines\put(141,122){\line(-1,0){13}} \put(127,122){\circle3} \put(113,122){\line(-1,0){13}} \put(99,122){\circle3} }
\put(153,137){$H_{29}$} \qbezier(165,118)(142,119)(154,107) \qbezier(165,118)(166,95)(154,107) \put(165,118){\line(1,1){10}} \put(175,128){\circle*3} {\thinlines\put(175,128){\line(1,1){10}} \put(186,139){\circle3} }
\put(194,137){$H_{30}$} \put(196,108){\circle*3} \qbezier(196,108)(190,132)(214,126) \qbezier(196,108)(220,102)(214,126) \put(214,126){\circle*3} {\thinlines\put(214,126){\line(1,1){11}} \put(226,138){\circle3} }
\put(235,137){$H_{31}$} \put(246,122){\circle*3} \qbezier(246,122)(258,146)(270,122) \qbezier(246,122)(258,98)(270,122) \put(270,122){\circle*3} 
\put(277,137){$H_{32}$} \put(282,112){\circle*3} \put(302,132){\circle*3} \put(282,112){\line(1,1){20}} {\thinlines
\put(311,137){$H_{33}$} \put(313,122){\circle3} \put(314,122){\line(1,0){14}} \put(328,122){\circle*3} \put(328,122){\line(1,-1){11}} \put(328,122){\line(1,1){11}} \put(340,134){\circle3} \put(340,110){\circle3} }
\put(0,87){$H_{34}$} \put(4,58){\circle*3} \put(4,58){\line(1,1){14}} \put(18,72){\circle*3} {\thinlines\put(18,72){\line(-1,-1){13}} \put(18,72){\line(1,1){13}} \put(32,86){\circle3} \put(4,58){\circle3} }
\put(42,87){$H_{35}$} \put(45,58){\circle*3} \put(73,86){\circle*3} \put(45,58){\line(1,1){28}} \put(59,72){\circle*3} 
\put(82,87){$H_{36}$} \put(84,72){\circle*3} \put(84,72){\line(1,0){15}} \put(99,72){\circle*3} {\thinlines\put(99,72){\line(1,-1){11}} \put(99,72){\line(1,1){11}} \put(111,84){\circle3} \put(111,60){\circle3} }
\put(122,87){$H_{37}$} {\thinlines\put(124,72){\circle3} \put(125,72){\line(1,0){14}} }\put(139,72){\circle*3} \put(139,72){\line(1,-1){12}} \put(139,72){\line(1,1){12}} \put(151,84){\circle*3} \put(151,60){\circle*3} 
\put(162,87){$H_{38}$} \put(164,72){\circle*3} \put(164,72){\line(1,0){15}} \put(179,72){\line(1,-1){12}} \put(179,72){\line(1,1){12}} \put(191,84){\circle*3} \put(191,60){\circle*3} 
\put(205,87){$H_{39}$} \put(206,56){\circle*3} \put(238,88){\circle*3} \put(206,56){\line(1,1){33}} \put(227,77){\circle*3} \put(217,67){\circle*3} 
\put(248,87){$H_{40}$} \put(251,72){\circle*3} \put(277,72){\circle*3} \put(251,72){\line(1,0){26}} \qbezier(264,72)(278,100)(292,72) \qbezier(264,72)(278,44)(292,72) \put(292,72){\circle*3} 
\put(298,87){$H_{41}$} \put(310,72){\circle*3} \put(340,72){\circle*3} \put(340,72){\line(-1,-1){15}} \put(310,72){\line(1,1){15}} \put(325,87){\circle*3} \put(325,57){\circle*3} \put(325,57){\line(-1,1){15}} \put(325,87){\line(1,-1){15}} 
\put(0,37){$H_{42}$} \qbezier(22,22)(-2,22)(10,10) \qbezier(22,22)(22,-2)(10,10) \put(22,22){\line(0,1){16}} \put(22,22){\line(1,0){16}} \put(38,22){\circle*3} \put(22,38){\circle*3} 
\put(53,37){$H_{43}$} \put(86,22){\line(-1,0){16}} \put(70,22){\line(1,0){16}} \qbezier(86,22)(104,4)(104,22) \qbezier(86,22)(104,40)(104,22) \qbezier(70,22)(52,40)(52,22) \qbezier(70,22)(52,4)(52,22) 
\put(113,37){$H_{44}$} \put(136,22){\circle*3} \qbezier(136,22)(112,22)(124,10) \qbezier(136,22)(136,-2)(124,10) \put(136,22){\line(0,1){16}} \put(136,38){\circle*3} {\thinlines\put(136,22){\line(1,0){15}} \put(152,22){\circle3} }
\put(160,37){$H_{45}$} \put(173,17){\circle*3} \put(162,6){\circle*3} \put(184,28){\circle*3} \put(162,6){\line(1,1){22}} {\thinlines\put(184,28){\line(1,1){10}} \put(195,39){\circle3} }{\thinlines
\put(206,37){$H_{46}$} \put(213,5){\circle*3} \put(213,5){\line(1,1){10}} \put(224,16){\circle3} \qbezier(224,17)(219,34)(235,28) \qbezier(224,17)(241,12)(235,28) \put(236,28){\circle*3} \put(236,28){\line(1,1){10}} \put(247,39){\circle3} }
\put(259,37){$H_{47}$} \qbezier(276,22)(250,22)(263,9) \qbezier(276,22)(276,-4)(263,9) \put(276,22){\line(1,1){13}} \put(289,35){\circle*3} 
\put(299,37){$H_{48}$} \put(324,22){\circle*3} {\thinlines\put(324,22){\line(0,1){15}} \put(324,22){\line(0,-1){15}} \put(324,6){\circle3} \put(324,38){\circle3} \put(324,22){\line(-1,0){15}} \put(324,22){\line(1,0){15}} \put(340,22){\circle3} \put(308,22){\circle3} }
\end{picture} \]
\caption{{\em Dessins d'enfant} of the Belyi coverings for parametric Heun-to-hypergeometric reductions}
\label{fg:dessins}
\end{figure}
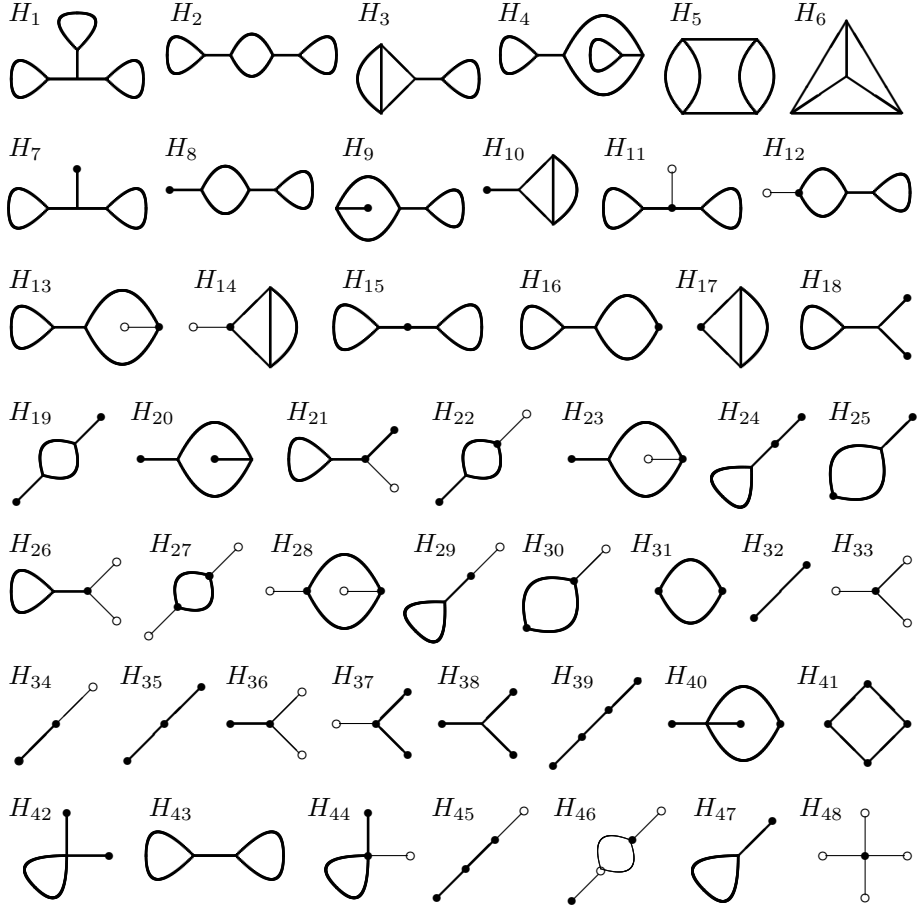


It is instructive to follow the branching orders and incidences 
on  the dessins 
while following our classification of possible coverings in Tables \ref{tab:clas}--\ref{tab:clas2}.
In~principle, the pull-back Belyi coverings can be classified by generating and counting 
the dessins  satisfying the suitable branching patterns. However, it is difficult to ensure 
completeness  of a large list of dessins. We first computed the Belyi coverings explicitly,
then easily generated the required dessins by combinatorial consideration. 
For each possible branching pattern, there is at most one Belyi covering except for the coverings
$H_{21}$ and $H_{44}$. 
Therefore completeness and identification of the dessins is quickly established. 
The coverings $H_{21}$, $H_{44}$ 
are defined over $\QQ(\sqrt{-3})$ and $\QQ(i)$, respectively. 
All other coverings are defined over $\QQ$ and $\RR$, hence their dessins have a reflection symmetry. 
The dessins for $H_{21}$, $H_{44}$ should actually be counted twice,
as the complex conjugation gives non-isotopic dessins. 
The proper count of dessins and Belyi coverings is therefore 50, not 48.

Many of the encountered Belyi coverings occur in other contexts, particularly
in the theory of elliptic surfaces and Picard-Fuchs equations.
The coverings from $H_1$ to $H_{38}$ occur in
Herfurtner's list~\cite{Herfurtner91} of elliptic surfaces with four singular fibers,
up to M\"obius transformations. The order of these coverings follows \cite[Table 3]{Herfurtner91},
and the numbering is used in \cite{Movasati2009} where the corresponding pull-backs 
to Heun equations (specializable to Picard-Fuchs equations for the elliptic surfaces) are observed. 
The coverings $H_1$ to $H_6$ have the maximal degree 12,
and produce the Beauville list \cite{Beauville82} of the coverings
generating {\em semi-stable} elliptic surfaces with four singular fibers. 
Their branching orders above $z=0$ are all 3,
and above $z=1$ they are all 2, as can be seen from the dessins.
The branching pattern of $H_1$ is written by us as follows:
\begin{equation}
\label{eq:firstpattern}
\text{\branch{\brep26}{\brep34}{9+1+1+1}.}
\end{equation}
The four singular fibers of the corresponding elliptic surface have the Kodaira types $I_9,I_1,I_1,I_1$. 
This covering is also described as a {\em Davenport-Stothers triple} \cite{ShiodaDS}:
it can be written as $F^3/G^2$, 
where $F,G$ are polynomials of degree 4 and 6 (respectively), 
such that  the polynomial $F^3-G^2$ has the minimal possible degree 3.

A pull-back transformation 
defined over $\RR$ can be nicely illustrated
by subdivisions of the Schwarz quadrangle for the pulled-back Heun equation
into Schwarz triangles for the initial hypergeometric equations, following 
\cite{Hodgkinson18,Hodgkinson20}. In the hyperbolic geometry setting,
these are  \emph{Coxeter decompositions} \cite{Felikson98} or {\em divisible tilings}
\cite{Broughton2000} of a hyperbolic quadrangle into mutually similar hyperbolic triangles. 
We describe these picturesque illustrations in \S \ref{subsec:Ftilings} and Figure \ref{fg:felixon}.

This article is structured as follows.   Section~\ref{sec:3} establishes pivotal lemmas
on the behavior of singularities and local exponents of Fuchsian equations
under pull-back transformations.
Section~\ref{sec:4} presents the main results in Tables
\ref{tab:clas}--\ref{tab:coverings}, and explains them (and the notation) in a few steps.
Of the three mentioned generation steps \refpart{i}--\refpart{iii},
the first step is elaborated in \S\S \ref{subsec:step2}, \ref{subsec:step3},
while computations for Step \refpart{ii} are reviewed in \S \ref{subsec:step4}.
Step \refpart{iii} is thoroughly considered in the parallel paper \cite{Vidunas2009b}.
Furthermore, \S \ref{sec:5} relates our classification to Herfurtner's list ~\cite{Herfurtner91}
and Felikson's list of Coxeter decompositions \cite{Felikson98},
and \S \ref{subsec:composite} examines the composite transformations. 
Section~\ref{sec:6} presents an elegant approach to prove non-existence 
(or uniqueness) 
of Belyi coverings with some branching patterns,
and applies it not only to the obtained list of branching patterns,
but also to  the Miranda--Persson classification~\cite{Miranda89} 
of K3 semi-stable elliptic surfaces with six singular fibers. 


\section{Pull-backs and local exponents}
\label{sec:3}

The singular points and the local exponents of Gauss hypergeometric equation (\ref{eq:GHE})
are usefully encoded in the Riemann P-symbol scheme
\begin{equation}
\label{eq:PsymbolGHE}
P\left
\{\begin{array}{ccc|c}
0 & 1 & \infty & z\\ \hline
0 & 0 & a & \\
1-c & c-a-b & b & 
\end{array} 
\right\}.
\end{equation}
The local exponent differences at the 3 singular points are therefore
\begin{equation} \label{eq:hpgled}
1-c, \quad c-a-b, \quad a-b. 
\end{equation}
Similarly, the Riemann scheme of the Heun equation (\ref{eq:HE}) is
\begin{equation}
\label{eq:PsymbolHE}
P\left\{
\begin{array}{cccc|c}
0 & 1 &t & \infty &x\\ \hline
0 & 0 &0 & a & \\
1-c & 1-d & c+d-a-b & b &
\end{array}
\right\}.
\end{equation}
The parameters $a,b,c,d$ determine the local exponents, 
while the parameter~$q$ is \emph{accessory}.  
In particular, the 4 exponent differences are 
\begin{equation}
1-c, \quad 1-d, \quad c+d-a-b, \quad a-b.
\end{equation}
The Heun equation contains many interesting special cases, including the Lam\'e
equation~\cite{Erdelyi53}. 
The Heun equation and its solutions appear in problems of diffusion, wave
propagation, heat and mass transfer, magneto-hydrodynamics, particle
physics, and the cosmology of the very early universe.

By $\hpgde{\alpha,\beta,\gamma}$ we denote a Gauss hypergeometric
equation of the form~(\ref{eq:GHE}) with the exponent differences (\ref{eq:hpgled})
equal to $\alpha,\,\beta,\,\gamma$ in some order.  
Similarly, by $\heunde{\alpha,\beta,\gamma,\delta}$ we denote a Heun equation
of the form~(\ref{eq:HE}) with its exponent differences equal to
$\alpha,\,\beta,\,\gamma,\,\delta$ in some order. 
These notations do not assign local exponents to particular singular points,
nor they specify the accessory parameter~$q$.

The degree of a pull-back transformation (\ref{eq:algtransf}) between Fuchsian 
equations is the degree of the rational function $\varphi(x)$. 
The existence of a pull-back 
from some $\hpgde{\alpha_1,\beta_1,\gamma_1}$ to some
$\heunde{\alpha_2,\beta_2,\gamma_2,\delta_2}$ of degree~$D$
will be indicated by
\begin{equation}
\label{eq:GHEtoHE}
\hpgde{\alpha_1,\beta_1,\gamma_1}\pback{D}\heunde{\alpha_2,\beta_2,\gamma_2,\delta_2}.  
\end{equation}
Sometimes the pull-back covering or the transformation will be indicated more specifically 
by a subscript on the degree~$D$.  Similarly,
\begin{equation*}
\label{eq:GHEtoGHE}
\hpgde{\alpha_1,\beta_1,\gamma_1}\pback{D}\hpgde{\alpha_2,\beta_2,\gamma_2}, \quad
\heunde{\alpha_1,\beta_1,\gamma_1,\delta_2}\pback{\;D_H}\heunde{\alpha_2,\beta_2,\gamma_2,\delta_2}
\end{equation*}
will indicate pull-back transformations between hypergeometric or between Heun equations. 
For brevity, we refer to these three types of  transformations as 
{\em Gauss-to-Heun},  {\em Gauss-to-Gauss} (or just {\em hypergeometric})  
and {\em Heun-to-Heun} pull-back transformations. In particular, 
the 3 quadratic transformations mentioned at the beginning of this article actually are:
 \begin{align}
\label{eq:quadtr1}
& \hpgde{\fr12,\,\alpha,\,\beta} \pback{2}\hpgde{\alpha,\,\alpha,\,2\beta},\\ \label{eq:quadtr3}
& \heunde{1/2,\,1/2,\,\alpha,\,\beta}\pback{\,\HT2}\heunde{\alpha,\,\alpha,\,\beta,\,\beta},\\ \label{eq:quadtr2}
& \hpgde{\alpha,\,\beta,\,\gamma}\pback{2}\heunde{\alpha,\,\alpha,\,2\beta,\,2\gamma}. 
\end{align}
As in the notation $(\alpha_1,\beta_1,\gamma_1)\pback{D}(\alpha_2,\beta_2,\gamma_2)$
of~\cite{Vidunas2009}, the arrows follow the direction of the covering
$\varphi\colon\PP_x^1\to\PP^1_z$.  To~emphasize: these notations
indicate the existence of \emph{some} differential equations with the stated exponent 
differences that are related by a pull-back transformation, rather than the existence
of a pull-back between \emph{any} equations with the
specified exponent differences.

Our classification is obtained by considering the behavior of singularities and
local exponents of Fuchsian equations under pull-backs. 
Any transformation of the form~(\ref{eq:algtransf}) pulls-back a Fuchsian
equation to a Fuchsian equation, usually with more singular points.
To pull-back a hypergeometric equation to a Fuchsian equation with just
4 singular points, special restrictions apply to the covering $\varphi(x)$ and
the hypergeometric equation. 

The following definitions are taken from~\cite{Vidunas2009}.  
An \emph{irrelevant singular point} of a Fuchsian equation
is a non-logarithmic singular point where the local exponent difference is equal to~$1$.  
For comparison, an {\em ordinary} (i.e., non-singular) point 
is a non-logarithmic point with the local exponents $0$ and $1$,
and an {\em apparent singularity} is a non-logarithmic singular point
with the local exponents $0$ and an integer $k>1$.
A~\emph{relevant singular point} is one that is not irrelevant.  
Any irrelevant singular point can be turned into an ordinary point by
a pull-back~(\ref{eq:algtransf}) which is prefactor-only, 
i.e., one with $\varphi(x)=x$.  Hence, what is of primary importance is how many
\emph{relevant} singular points the pulled-back equation has.  This number
is affected only by the choice of covering $\varphi(x)$, and not by the
choice of prefactor~$\theta(x)$.

The following two lemmas describe the crucial behavior of singularities
and local exponents under pull-backs.  
\begin{lemma}
\label{lem:genrami}
Let\/ $\varphi\colon\PP^1_x\to\PP^1_z$ be a finite covering.  Let\/ $\EQN_1$
denote a Fuchsian equation on\/ $\PP^1_z$, and let\/ $\EQN_2$ denote
the pull-back on\/ $\PP^1_x$ of\/ $\EQN_1$ by\/ transformation {\rm(\ref{eq:algtransf})}. 
For any\/ $S\in\PP^1_x$, let\/ 
$k:=\ord_\varphi(P)$ denote the branching order of\/ $\varphi$ at\/ $S$.
\begin{enumerate}
\item The exponents of\/ $\EQN_2$ at\/ $S$ equal\/ $k\alpha_1+\gamma$,
  $k\alpha_2+\gamma$, where:
\begin{itemize}
\item $\alpha_1,\alpha_2$ are the exponents of\/ $\EQN_1$ at\/
$\varphi(S)\in\PP^1_z$;
\item $\gamma$ is the exponent of the radical function\/ $\theta(x)$ at\/
  $S$.
\end{itemize}
\item If\/ $\varphi(S)$ is an ordinary point of\/ $\EQN_1$, then\/ $S$ will
  fail to be a relevant singular point for\/ $\EQN_2$ if and
  only if\/ $k=1$ {\rm(}i.e., the covering\/ $\varphi$ does not branch at\/
  $S$, i.e., $S$ is not a branching point of\/ $\varphi${\rm)}.
\item If $\varphi(S)$ is a singular point of $\EQN_1$, then $S$ will fail
  to be a relevant singular point of\/ $\EQN_2$ if and only if 
\begin{itemize}
\item $k>1$ and the exponent difference at\/ $\varphi(S)$ is equal to\/
  $1/k$; or,
\item $k=1$ and\/ $\varphi(S)$ is irrelevant.
\end{itemize}
 In either case\/ $S$ will be an irrelevant singular point or an ordinary
 point.
\end{enumerate}
\end{lemma}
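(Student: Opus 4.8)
The plan is to reduce all three assertions to one local computation at $S$, since each depends only on the local exponents of $\EQN_2$ at $S$ and on whether $S$ is logarithmic. First I would fix local coordinates $s$ at $S$ and $w$ at $\varphi(S)$ in which the covering reads $w=s^{k}$. A local solution of $\EQN_1$ at $\varphi(S)$ with exponent $\alpha$ has the form $w^{\alpha}h(w)$ with $h$ holomorphic, so its pull-back along $\varphi$ is $s^{k\alpha}\tilde h(s)$ with $\tilde h$ holomorphic and $\tilde h(0)\neq0$ whenever $h(0)\neq0$; multiplying by the radical prefactor, which near $S$ equals $s^{\gamma}$ times a unit, shifts the exponent by $\gamma$. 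This already yields part~\refpart{a}: the two exponents of $\EQN_2$ at $S$ are $k\alpha_1+\gamma$ and $k\alpha_2+\gamma$. I would stress that this is nothing but the transformation of the indicial roots under $w\mapsto s^{k}$ followed by the gauge twist, so it holds whether or not $\varphi(S)$ is logarithmic, the indicial roots being read from the leading terms alone.

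For part~\refpart{b} I would insert the ordinary-point exponents $\{0,1\}$ into part~\refpart{a}, obtaining exponents $\gamma,\,k+\gamma$ at $S$ and exponent difference $k$. When $k=1$ the covering is a local isomorphism, so the non-logarithmic ordinary point is carried to a non-logarithmic point of exponent difference $1$ (the prefactor shifts both exponents equally); such a point is irrelevant or ordinary, and in either case $S$ is not relevant. When $k>1$ the two holomorphic solutions $1+O(w)$ and $w+O(w^2)$ pull back to honest holomorphic functions of orders $0$ and $k$, so the resulting point is non-logarithmic with exponent difference $k>1$---an apparent singularity once $\gamma$ is normalized away---which is not irrelevant and hence relevant. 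This gives the stated equivalence.

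For part~\refpart{c}, write $e$ for the exponent difference at the singular point $\varphi(S)$; by part~\refpart{a} the exponent difference of $\EQN_2$ at $S$ equals $ke$. Since an irrelevant point and an ordinary point both have exponent difference $1$, a necessary condition for $S$ not to be relevant is $ke=1$, i.e.\ $e=1/k$. If $k=1$ this forces $e=1$, and because $\varphi$ is then a local isomorphism the non-logarithmic property transfers both ways, so $S$ is irrelevant exactly when $\varphi(S)$ is---the second bullet. If $k>1$ we are in the first bullet, $e=1/k$. I expect the one genuinely delicate point of the lemma to sit here: the pulled-back exponent difference $ke=1$ is an \emph{integer}, yet $S$ must still be shown non-logarithmic. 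The key observation is that $e=1/k\notin\ZZ$ forces $\varphi(S)$ to be non-logarithmic, so $\EQN_1$ possesses a clean basis $w^{\alpha_1}h_1$, $w^{\alpha_2}h_2$ with $h_i(0)\neq0$; pulling these back and applying the prefactor produces two solutions of the shape $s^{k\alpha_i+\gamma}\,(\text{unit})$, whose leading orders differ by $1$ and which are therefore linearly independent. A basis of this form carries no logarithm, so $S$ is a non-logarithmic singular point of exponent difference $1$, hence irrelevant (or ordinary after the prefactor shift). Both implications follow, and it is precisely this cancellation of the logarithm under ramification that lets the pull-backs reduce the number of relevant singular points.
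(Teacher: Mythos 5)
Your proof is correct, and it supplies in full the standard local computation (indicial roots under $w=s^{k}$ plus the gauge shift by $\gamma$, with the observation that a non\ppt-integer exponent difference $1/k$ at $\varphi(S)$ rules out logarithms and hence that the pulled-back difference $1$ comes with a clean solution basis). The paper itself does not argue the lemma at all but merely cites \cite[Lemma~2.4]{Vidunas2009}, and your argument is exactly the one underlying that reference, so there is nothing to fault and no genuinely different route to compare.
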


\begin{proof}
The first statement is mentioned in the proof
of~\cite[Lemma~2.4]{Vidunas2009}.  The other two statements are parts 2
and~3 of~\cite[Lemma~2.4]{Vidunas2009}.
\end{proof}

\begin{lemma}
\label{lem:genrami2}
Let\/ $\varphi\colon\PP^1_x\to\PP^1_z$ be a covering of degree\/ $D$, and
let\/ $\Delta$ denote a set of\/ $3$ points on\/ $\PP^1_z$.  
\begin{enumerate}
\item If all branching points of\/ $\varphi$ lie above\/ $\Delta$, i.e., no point
of\/ $\PP^1_z\setminus\Delta$ is a critical value of\/ $\varphi$,
then there are exactly\/ $D+2$ distinct points on\/ $\PP^1_x$ above\/ $\Delta$.
Otherwise, there are more than\/ $D+2$ distinct points above\/ $\Delta$.
\item If there are exactly $D+3$ distinct points above $\Delta$, there is only one 
branching point that is not above $\Delta$.
\end{enumerate}
\end{lemma}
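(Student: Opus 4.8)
The plan is to reduce both assertions to a single ramification count coming from the Riemann--Hurwitz formula. First I would introduce, for each point $p\in\PP^1_z$, the number $n(p)$ of \emph{distinct} points in the fiber $\varphi^{-1}(p)$. Since the branching orders in a fiber sum to the degree, $\sum_{P\in\varphi^{-1}(p)}\ord_\varphi(P)=D$, the ramification contributed above $p$ is $\sum_{P\in\varphi^{-1}(p)}\bigl(\ord_\varphi(P)-1\bigr)=D-n(p)$. Writing $\Delta=\{p_1,p_2,p_3\}$ and setting $N:=n(p_1)+n(p_2)+n(p_3)$ for the total number of distinct points above $\Delta$, the total ramification lying above $\Delta$ is therefore $3D-N$.

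Next I would invoke Riemann--Hurwitz. As both $\PP^1_x$ and $\PP^1_z$ have genus $0$, the formula reads $-2=-2D+\sum_{P}\bigl(\ord_\varphi(P)-1\bigr)$, so the \emph{total} ramification of $\varphi$ equals $2D-2$. Subtracting off the part above $\Delta$, the ramification occurring above points of $\PP^1_z\setminus\Delta$ equals $(2D-2)-(3D-N)=N-D-2$. This one identity drives the whole lemma, and it also explains the numerology: $N-D-2$ is simultaneously the count of ``extra'' preimages beyond the baseline $D+2$ and the total ramification off~$\Delta$.

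For the first assertion, if all branching points lie above $\Delta$ then the left-hand side vanishes, forcing $N=D+2$; conversely, a branching point $P$ with $\varphi(P)\notin\Delta$ contributes $\ord_\varphi(P)-1\geq 1$, so $N-D-2\geq 1$ and hence $N>D+2$. For the second assertion, the hypothesis $N=D+3$ makes the ramification off $\Delta$ equal to exactly $1$; since this is the sum $\sum\bigl(\ord_\varphi(P)-1\bigr)$ over the points $P$ with $\varphi(P)\notin\Delta$, and every term of that sum is a positive integer, the sum can equal $1$ only if there is precisely one such point (necessarily with $\ord_\varphi(P)=2$).

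I do not expect a genuine obstacle here; the argument is entirely a matter of setting up the ramification bookkeeping correctly. The only step meriting a word of care is the final one: I would phrase it so that ``one branching point'' unambiguously refers to a single \emph{geometric} point $P\in\PP^1_x$ with $\ord_\varphi(P)>1$, rather than to a single critical value in $\PP^1_z\setminus\Delta$. Because the identity $N-D-2=1$ counts the points $P$ directly (each weighted by $\ord_\varphi(P)-1\geq1$), the desired conclusion is immediate once this distinction is made explicit.
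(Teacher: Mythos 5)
Your proof is correct and is essentially the argument the paper relies on: it cites part~1 of \cite[Lemma~2.5]{Vidunas2009} together with the Hurwitz formula, which gives exactly your identity that the ramification off $\Delta$ equals $N-D-2$, and calls part~\refpart{b} a ``slight extension'' of the same count. You have simply written out the bookkeeping that the paper delegates to the citation, including the correct observation that $N=D+3$ forces a single simple branching point outside the fibers over $\Delta$.
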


\begin{proof}
The first statement is part~1 of~\cite[Lemma~2.5]{Vidunas2009}.  It follows from the
Hurwitz formula \cite[Corollary IV.2.4]{Hartshorne77}, which says that 
the sum of $\ord_\varphi(P)-1$ over the branching points~$P\in\PP^1_x$ must equal $2(D-1)$.
The second statement is a slight extension (utilized in \cite{Kitaev2005}).
\end{proof}

Suppose one starts with a hypergeometric equation $\EQN_1$ on~$\PP_z^1$.
Let $\Delta$ denote the set $\{0,\,1,\,\infty\}$ containing the singularities of $\EQN_1$. 
 It follows from the above lemmas that to minimize the number of singular
points of a pull-back of~$\EQN_1$, 
one should typically allow branching points of~$\varphi$ only above~$\Delta$.
Otherwise, there would be more than $D+2$ distinct points above~$\Delta$,
and generically, each of these $D+2$ points would be a singular point of
the pulled-back equation.  By Lemma~\ref{lem:genrami}\refpart{c}, further
minimization is possible if one or more of the exponent differences
of~$\EQN_1$ in~$\Delta$ are restricted to be of the form~$1/k$.

Recall that a covering $\varphi\colon\PP^1\to\PP^1$ is a \emph{Belyi
  covering}~\cite{Shabat2000} if it is unbranched above the complement of a
set of three points, such as $\{0,1,\infty\}$.  By the above consideration,
one expects that the pull-back coverings for Gauss-to-Heun transformations
will typically be Belyi coverings.  The following proposition classifies
the rather degenerate situations in which non-Belyi coverings can occur.

\begin{proposition}
\label{prop:nonbelyi}
Suppose there is a pull-back transformation 
{\rm(\ref{eq:algtransf})} of a hypergeometric equation\/ $\EQN_1$ 
to a Fuchsian equation with at most\/ $4$ singular points, 
and the covering defined by the rational function $\varphi(x)$ is not a Belyi map. 
Then one of the following statements must hold:
\begin{itemize}
\item[(i)] Two of the three exponent differences of $\EQN_1$ are equal to $1/2$; or
\item[(ii)] $\EQN_1$ has a basis of solutions consisting of algebraic functions of $z$.
\end{itemize}
\end{proposition}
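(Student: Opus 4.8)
The plan is to convert the failure of the Belyi condition into an excess of ramification off the singular locus, and then to collide that excess with the Hurwitz formula. Write $\Delta=\{0,1,\infty\}$ for the singular locus of $\EQN_1$ and $\alpha,\beta,\gamma$ for its exponent differences; I would first reduce to the case that none of $\alpha,\beta,\gamma$ equals $1$, since an exponent difference $1$ marks an irrelevant singularity that a prefactor-only transformation removes, leaving a degenerate equation with reducible monodromy that is handled separately. Since $\varphi$ is not a Belyi map it has at least four critical values, so at least one critical value lies off $\Delta$; by Lemma~\ref{lem:genrami}\refpart{b} each branching point of $\varphi$ above $\PP^1_z\setminus\Delta$ is a \emph{relevant} singular point of $\EQN_2$. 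Let $b\ge1$ be the number of such points and $R_{\mathrm{out}}\ge b$ the ramification they carry.

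Next I would count the relevant singularities lying above $\Delta$. By Lemma~\ref{lem:genrami}\refpart{c}, over a relevant point $w\in\Delta$ with exponent difference $\delta_w$ a point $S$ fails to be relevant only when $\delta_w=1/k_w$ for some integer $k_w\ge2$ and $\ord_\varphi(S)=k_w$; writing $s_w$ for the number of such absorbable points one has $s_w\le D/k_w$, since they alone account for $s_w k_w$ of the degree. With $r_w$ the number of points above $w$, the Hurwitz formula (Lemma~\ref{lem:genrami2}) gives $\sum_{w}r_w=3D-R_\Delta$ and $R_\Delta=2D-2-R_{\mathrm{out}}$, whence the number of relevant singularities of $\EQN_2$ is
\[ N=\sum_{w\in\Delta}(r_w-s_w)+b=D+2+R_{\mathrm{out}}-\sum_{w}s_w+b. \]
The hypothesis of at most four singular points gives $N\le4$, hence $\sum_w s_w\ge D+R_{\mathrm{out}}+b-2\ge D+2b-2\ge D$; together with $s_w\le D/k_w$ this forces
\[ \textstyle\sum_{w:\,\delta_w=1/k_w}1/k_w\ \ge\ 1+\tfrac{2b-2}{D}\ \ge\ 1. \]

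It then remains to solve $\sum 1/k_w\ge1$ in integers $k_w\ge2$, where an exponent difference not of the form $1/k$ simply drops out of the sum. If only two terms survive they must be $1/2+1/2$, and any surviving triple containing two $2$'s likewise has two exponent differences equal to $1/2$: this is alternative~(i). The only remaining triples, those with at most one entry equal to $2$, are the spherical ones $(2,3,3),(2,3,4),(2,3,5)$ and the Euclidean ones $(2,3,6),(2,4,4),(3,3,3)$. The spherical triples are precisely the tetrahedral, octahedral and icosahedral entries of Schwarz's classical list, so $\EQN_1$ then has finite monodromy and a basis of algebraic solutions, which is alternative~(ii).

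The step I expect to be the crux is excluding the three Euclidean triples, for which $\sum 1/k_w=1$ exactly. Here $D\le\sum_w s_w\le D\sum 1/k_w=D$ must be an equality throughout, which pins down $b=1$ and $s_w=D/k_w$ for every $w$; that is, \emph{every} point above each $w\in\Delta$ must be totally ramified of order $k_w$. But then the ramification above $\Delta$ alone is $R_\Delta=\sum_w D(1-1/k_w)=D(3-1)=2D$, which already exceeds the total $2D-2$ allowed by the Hurwitz formula --- equivalently, such complete ramification over three points with $\sum 1/k_w=1$ would give the source curve orbifold Euler characteristic $0$, i.e.\ genus~$1$ rather than $\PP^1$. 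This contradiction eliminates the Euclidean cases and leaves only alternatives~(i) and~(ii). This genus/Hurwitz obstruction is exactly the flavour of argument the paper later promotes for ruling out branching patterns, so I would expect to reuse it verbatim here.
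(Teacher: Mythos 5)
Your main argument is correct and is, in substance, the paper's own proof in different clothing: the inequality $\sum_w s_w\ge D$ that you extract from the Hurwitz bookkeeping is the paper's ``at least $D$ ordinary points above $\Delta$'' (obtained there directly from Lemma~\ref{lem:genrami2}\refpart{a}: at least $D+3$ points above $\Delta$, of which at most $3$ may be singular); the bound $s_w\le D/k_w$ is the same; and your elimination of the Euclidean triples --- equality forcing every fiber over $\Delta$ to be uniformly ramified, hence ramification $2D>2D-2$ --- is exactly equivalent to the paper's remark that an optimal arrangement still leaves only $D$ points above $\Delta$, contradicting the $D+3$ of Lemma~\ref{lem:genrami2}\refpart{a}. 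The Diophantine trichotomy and the appeal to the Schwarz list coincide with the paper's $M=2$ and $M=3$ cases.

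The one genuine gap is your opening ``reduction'' to the case that no exponent difference of $\EQN_1$ equals $1$. This is not a reduction. If $\EQN_1$ has a non-logarithmic singularity with exponent difference $1$ --- so $\EQN_1$ is of the form $\hpgde{1,\alpha,\alpha}$, with $\alpha$ possibly irrational --- then neither alternative (i) nor (ii) holds for $\EQN_1$, so the proposition is true for such equations only because no non-Belyi pull-back to an equation with at most $4$ singular points exists from them; that impossibility must be proved, not deferred to a case ``handled separately''. (Your main count does cover the subcase where the exponent difference $1$ sits at a \emph{logarithmic} singularity, since that fiber then contributes $s_w=0$; but then the point is relevant, not irrelevant as your sentence asserts.) The paper's proof devotes its entire $M=1$ bullet to precisely this degenerate case: one shows that having at most $3$ points above the two relevant singularities forces all critical values of $\varphi$ into a set of three points, i.e.\ forces $\varphi$ to be a Belyi map after all, while having $4$ or more such points, together with the branch points off $\Delta$ guaranteed by the non-Belyi hypothesis, already produces more than $4$ relevant singularities. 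Adding this short case would make your proof complete.
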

\begin{proof}
Let $D=\deg\varphi$, and $\Delta=\{0,\,1,\,\infty\}\subset\PP^1_z$.  
Since $\varphi\colon\PP^1_x\to\PP^1_z$ is not a Belyi map,
there is a branching point $P_0$ that does \emph{not} lie above $\Delta$.  
By part \refpart{a} of Lemma~\ref{lem:genrami2}, there are at least $D+3$
distinct points above $\Delta$.  At~most $3$~of them
can be singularities of the pulled-back equation, because $P_0$
will be a singularity by Lemma~\ref{lem:genrami}\refpart{b}. 
Therefore there are at~least $D$ ordinary points above $\Delta$.

One or more of the 3 exponent differences of $\EQN_1$ must be of the form~$1/k$
for an integer $k\ge1$, because only then ordinary points occur above $\Delta$
by Lemma~\ref{lem:genrami}\refpart{c}.
Above a point of $\Delta$ with the exponent difference~$1/k$, there may be at~most $D/k$ ordinary points.  
Let $M$ denote the number of restricted exponent differences of $\EQN_1$. 
There are three possibilities:
\begin{itemize}
\item $M=1$. One must have $k=1$, and by Lemma~\ref{lem:genrami}\refpart{c}, 
this point is not a relevant singularity for ~$\EQN_1$. Let $m$ denote the number
of distinct points above the two (generally) relevant singularities of $\EQN_1$.
If $m=2$, the covering is cyclic (i.e., M\"obius-equivalent to $\varphi(x)=x^D$). 
If $m=3$, there is only one branching point not above the relevant singularities of $\EQN_1$,
by Lemma  \ref{lem:genrami2}\refpart{b} basically. Hence $\varphi$ is a Belyi covering
for $m\le 3$. If $m>3$, the pulled-back equation will have more than 4 singularities.
\item $M=2$.  The exponent differences will be $1/k$, $1/\ell$ with
  $k,\ell$ positive integers and $D/k+D/\ell\ge D$.  One must have
  $1/k,1/\ell=1/2$, which is case~(i).
\item $M=3$.  The exponent differences will be $1/k$, $1/\ell$,
  $1/m$ with $k,\ell,m$ positive integers and $D/k+D/\ell+D/m\ge D$, i.e.,
  $1/k+1/\ell+1/m\ge 1$.  The subcase $1/k+1/\ell+1/m=1$ can be ruled~out,
  since even if the points above $\Delta$ are optimally
  arranged, there will be fewer than $D$ ordinary points above
  $\Delta$, contradicting Lemma~\ref{lem:genrami2}\refpart{a}.  It is
  known ~\cite{Erdelyi53,Poole36} that in the subcase $1/k+1/\ell+1/m>1$, 
  the equation $\EQN_1$ has only algebraic solutions.\qedhere
\end{itemize}
\end{proof}

\begin{remark*}
In case (i), the projective monodromy group of 
$\EQN_1$ is generally an infinite dihedral group. As we recall in \S \ref{sec:6},
the possible projective monodromies in case (ii) are: a finite cyclic, a finite dihedral,
$\mathrak{A}_4$~(tetrahedral), $\mathrak{S}_4$~(octahedral) or $\mathrak{A}_5$~(icosahedral) groups.
If $M=1$, the monodromy is generally an infinite cyclic group. 
Gauss-to-Heun transformations 
with (finite or infinite)  cyclic or dihedral monodromy 
are going to be considered throughly in a separate article \cite{VidunasHDD}.
\end{remark*}

\section{Main result: Generation and classification}
\label{sec:4}

Here we present the method and the results of classification of
Gauss-to-Heun transformations with at least one free parameter.
Following part \refpart{c} of Lemma \ref{lem:genrami}, we restrict $m\in\{0,1,2\}$
local exponent differences of the general hypergeometric equation (\ref{eq:GHE}) to
the reciprocals of integers $k\ge 1$. Thereby we have $M=3-m$ free parameters.
Basically, the free parameters are the unrestricted exponent differences.

We ignore the cases (considered in \cite{VidunasHDD})
when an exponent difference is restricted to 1 at a non-logarithmic singularity,
or when two exponent differences are restricted to $1/2$, as then the hypergeometric
equation has cyclic or dihedral monodromy. 
Apart from this, Tables \ref{tab:clas}, \ref{tab:clas1}, \ref{tab:clas2} 
below give a full list of  Gauss-to-Heun pull-back  transformations with a free parameter 
in terms of the exponent differences (in the first two columns), the degree and
the branching pattern of the pull-back covering (in the next two columns)
among the entries where a covering is indicated by the $H$-notation in the last column. 
Table {\rm\ref{tab:coverings}} gives a full list of the encountered Belyi maps
(up to M\"obius transformations and complex conjugation), 
and the introductory Figure \ref{fg:dessins} depicts the {\em dessins d'enfant} of those Belyi maps.
The parallel article \cite{Vidunas2009b} identifies the pulled-back Heun equations 
in detail, and gives a representative list of transformation formulas between 
hypergeometric and Heun functions. 

\begin{table}
\begin{center}
{
\small
\begin{tabular}{llcll}
\hline
\multicolumn{2}{@{}c}{Exponent differences} & Deg. &
Branching pattern & Covering characterization, \\
\cline{1-2} hyperg. & Heun & $D$ & above singularities & composition \\
\hline
$\led{\alpha,\,\beta,\,\gamma}$ & $\led{\alpha,\,\alpha,\,2\alpha,\,2\gamma}$ & 2
& \branch{2}{1+1}{1+1}
& \PH{32}, \FF{1}, \FY{1}, \nocomposition \\
\hline 
$\led{\fr12,\,\alpha,\,\beta}$
& $\led{\alpha,\,\alpha,\,2\alpha,\,4\beta}$ & 4
& \branch{\brep22}{4}{2+1+1}
& \PH{35}, \FF{4}, \FY{5}, \comp{\GT2}{2} \\
& $\led{\alpha,\,3\alpha,\,\beta,\,3\beta}$ & 
& \branch{\brep22}{3+1}{3+1}
& \PHfortyseven, \FE{4}, \FY{6}, \nocomposition \\
& $\led{2\alpha,\,2\alpha,\,\beta,\,3\beta}$ & 
& \branch{\brep22}{3+1}{2+2}
& no covering, \PN{27} \\
& $\led{2\alpha,\,2\alpha,\,2\beta,\,2\beta}$ & 
& \branch{\brep22}{2+2}{2+2}
& \PH{31}, \FF{3}, \FY{4}, $2\times 2$ \\
& $\led{1/2,\,\alpha,\,2\alpha,\,3\beta}$ & 3
& \branch{\brep21+1}{2+1}{3}
& \PH{34}, \FF{2}, \FY{2}, \nocomposition \\ 
\hline 
$\led{1/3,\,\alpha,\,\beta}$
& $\led{\alpha,\,2\alpha,\,\beta,\,2\beta}$ & 3
& \branch{\brep31}{2+1}{2+1}
& \PH{34}, \FT{2}, \FY{3}, \nocomposition \\
& $\led{\alpha,\,\alpha,\,\alpha,\,3\beta}$ & 
& \branch{\brep31}{3}{1+1+1} & \PH{33}, \nocomposition \\
\hline
\end{tabular} 
}
\bigskip
\caption{Possible branching patterns of hypergeometric-to-Heun
  transformations with 2 or 3 free parameters.}
\label{tab:clas} 
\end{center}
\end{table}

\begin{table}
\begin{center}
{
\small
\begin{tabular}{lcll} 
\hline
Exponent differences & Deg. & Branching pattern & Covering characterization, \\
of the Heun equation & $D$ & above singularities & composition \\
\hline
$\led{\alpha,\,\alpha,\,\alpha,\,9\alpha}$ & 12
& \branch{\brep26}{\brep34}{9+1+1+1}
& \PH{1}, \comp{\GT4}{\CT3} \\
$\led{\alpha,\,\alpha,\,2\alpha,\,8\alpha}$ & 
& \branch{\brep26}{\brep34}{8+2+1+1}
& \PH{2},  \FF{23}, \FY{34}, \compp{\GT3}{\GT2}{2}\\
$\led{\alpha,\,\alpha,\,3\alpha,\,7\alpha}$ & 
& \branch{\brep26}{\brep34}{7+3+1+1}
& no covering, \PN{1} \\
$\led{\alpha,\,2\alpha,\,2\alpha,\,7\alpha}$ & 
& \branch{\brep26}{\brep34}{7+2+2+1}
& no covering, \PN{2} \\
$\led{\alpha,\,\alpha,\,4\alpha,\,6\alpha}$ & 
& \branch{\brep26}{\brep34}{6+4+1+1}
& no covering, \PN{3} \\
$\led{\alpha,\,2\alpha,\,3\alpha,\,6\alpha}$ & 
& \branch{\brep26}{\brep34}{6+3+2+1}
& \PH{3}, \FF{27}, \FY{33}, \comp{\GT4}{3}, \comp{\GT3}{4}\\
$\led{2\alpha,\,2\alpha,\,2\alpha,\,6\alpha}$ & 
& \branch{\brep26}{\brep34}{6+2+2+2}
& no covering, \PN{4} \\
$\led{\alpha,\,\alpha,\,5\alpha,\,5\alpha}$ & 
& \branch{\brep26}{\brep34}{5+5+1+1}
& \PH{4},  \FF{24},  \FY{32}, \comp{6}{\HT2} \\
$\led{\alpha,\,2\alpha,\,4\alpha,\,5\alpha}$ & 
& \branch{\brep26}{\brep34}{5+4+2+1}
& no covering, \PN{5} \\
$\led{\alpha,\,3\alpha,\,3\alpha,\,5\alpha}$ & 
& \branch{\brep26}{\brep34}{5+3+3+1}
& no covering, \PN{6} \\
$\led{2\alpha,\,2\alpha,\,3\alpha,\,5\alpha}$ & 
& \branch{\brep26}{\brep34}{5+3+2+2}
& no covering, \PN{7} \\
$\led{\alpha,\,3\alpha,\,4\alpha,\,4\alpha}$ & 
& \branch{\brep26}{\brep34}{4+4+3+1}
& no covering, \PN{8} \\
$\led{2\alpha,\,2\alpha,\,4\alpha,\,4\alpha}$ & 
& \branch{\brep26}{\brep34}{4+4+2+2}
& \PH{5},  \FF{22},  \FY{31}, 
\compp{\GT2}{\GT{\CT3}}{2},
\comp{\GT3}{\DDT} \\
$\led{2\alpha,\,3\alpha,\,3\alpha,\,4\alpha}$ & 
& \branch{\brep26}{\brep34}{4+3+3+2}
& no covering, \PN{9} \\
$\led{3\alpha,\,3\alpha,\,3\alpha,\,3\alpha}$ & 
& \branch{\brep26}{\brep34}{3+3+3+3}
& \PH{6}, \comp{\GT4}{\CT3}, \compp{\CT3}{\HT2}{\HT2} \\ 
$\led{\fr13,\,\alpha,\,\alpha,\,8\alpha}$ & 10
& \branch{\brep25}{\brep33+1}{8+1+1}
& \PH{7}, \nocomposition \\
$\led{\fr13,\,\alpha,\,2\alpha,\,7\alpha}$ & 
& \branch{\brep25}{\brep33+1}{7+2+1}
& \PH{8},  \FF{21}, \FY{28}, \nocomposition \\
$\led{\fr13,\,\alpha,\,3\alpha,\,6\alpha}$ & 
& \branch{\brep25}{\brep33+1}{6+3+1}
& no covering, \PN{10} \\
$\led{\fr13,\,2\alpha,\,2\alpha,\,6\alpha}$ & 
& \branch{\brep25}{\brep33+1}{6+2+2}
& no covering, \PN{11} \\
$\led{\fr13,\,\alpha,\,4\alpha,\,5\alpha}$ & 
& \branch{\brep25}{\brep33+1}{5+4+1}
& \PH{9},  \FF{19}, \FY{29}, \nocomposition \\
$\led{\fr13,\,2\alpha,\,3\alpha,\,5\alpha}$ & 
& \branch{\brep25}{\brep33+1}{5+3+2}
& \PH{10},  \FF{26},  \FY{30}, \nocomposition \\
$\led{\fr13,\,2\alpha,\,4\alpha,\,4\alpha}$ & 
& \branch{\brep25}{\brep33+1}{4+4+2}
& no covering, \PN{12} \\
$\led{\fr13,\,3\alpha,\,3\alpha,\,4\alpha}$ & 
& \branch{\brep25}{\brep33+1}{4+3+3}
& no covering, \PN{13} \\ 
$\led{\fr12,\,\alpha,\,\alpha,\,7\alpha}$ & 9
& \branch{\brep24+1}{\brep33}{7+1+1}
& \PH{11}, \nocomposition \\
$\led{\fr12,\,\alpha,\,2\alpha,\,6\alpha}$ & 
& \branch{\brep24+1}{\brep33}{6+2+1}
& \PH{12},  \FF{20},  \FY{27},  \comp{\GT3}{3} \\
$\led{\fr12,\,\alpha,\,3\alpha,\,5\alpha}$ & 
& \branch{\brep24+1}{\brep33}{5+3+1}
& \PH{13},  \FF{18},  \FY{26}, \nocomposition\\
$\led{\fr12,\,2\alpha,\,2\alpha,\,5\alpha}$ & 
& \branch{\brep24+1}{\brep33}{5+2+2}
& no covering, \PN{14} \\
$\led{\fr12,\,\alpha,\,4\alpha,\,4\alpha}$ & 
& \branch{\brep24+1}{\brep33}{4+4+1}
& no covering, \PN{15} \\
$\led{\fr12,\,2\alpha,\,3\alpha,\,4\alpha}$ & 
& \branch{\brep24+1}{\brep33}{4+3+2}
& \PH{14},  \FF{25},  \FY{25}, \comp{\GT3}{3}\\
$\led{\fr12,\,3\alpha,\,3\alpha,\,3\alpha}$ & 
& \branch{\brep24+1}{\brep33}{3+3+3}
& no covering, \PN{16} \\ 
$\led{\fr23,\,\alpha,\,\alpha,\,6\alpha}$ & 8
& \branch{\brep24}{\brep32+2}{6+1+1}
& \PH{15},  \FF{14},  \comp{\GT4}{2} \\
$\led{\fr23,\,\alpha,\,2\alpha,\,5\alpha}$ & 
&  \branch{\brep24}{\brep32+2}{5+2+1}
&\PH{16},  \FF{17},  \nocomposition \\
$\led{\fr23,\,\alpha,\,3\alpha,\,4\alpha}$ & 
& \branch{\brep24}{\brep32+2}{4+3+1}
& no covering,  \PN{17}\\
$\led{\fr23,\,2\alpha,\,2\alpha,\,4\alpha}$ & 
& \branch{\brep24}{\brep32+2}{4+2+2}
& no  covering, \PN{18} \\
$\led{\fr23,\,2\alpha,\,3\alpha,\,3\alpha}$ & 
& \branch{\brep24}{\brep32+2}{3+3+2}
& \PH{17}, \FF{13}, \comp{\GT4}{2}\\
$\led{\fr13,\fr13,\,\alpha,\,7\alpha}$ & 
& \branch{\brep24}{\brep32+1+1}{7+1}
& \PH{18}, \nocomposition \\
$\led{\fr13,\fr13,\,2\alpha,\,6\alpha}$ & 
& \branch{\brep24}{\brep32+1+1}{6+2}
& \PH{19},  \FF{16},  \FY{21}, \comp{\GT2}{\BT4}, \comp{\GT4}{2} \\
$\led{\fr13,\fr13,\,3\alpha,\,5\alpha}$ & 
& \branch{\brep24}{\brep32+1+1}{5+3}
& no covering, \PN{19} \\
$\led{\fr13,\fr13,\,4\alpha,\,4\alpha}$ & 
& \branch{\brep24}{\brep32+1+1}{4+4}
& \PH{20},  \FF{15},  \FY{20}, \comp{\GT2}{4},
\comp{\AT4}{\HT2}\\ 
$\led{\fr12,\fr13,\,\alpha,\,6\alpha}$ & 7
& \branch{\brep23+1}{\brep32+1}{6+1}
& \PH{21}, \nocomposition \\
$\led{\fr12,\fr13,\,2\alpha,\,5\alpha}$ & 
& \branch{\brep23+1}{\brep32+1}{5+2} &
\PH{22}, \FF{11}, \FY{18}, \nocomposition \\
$\led{\fr12,\fr13,\,3\alpha,\,4\alpha}$ & 
& \branch{\brep23+1}{\brep32+1}{4+3}
& \PH{23}, \FF{12}, \FY{19}, \nocomposition \\
\hline
\end{tabular}
}
\bigskip
\caption{Possible branching patterns for pull-back transformations
from $\hpgde{1/2,1/3,\alpha}$ to a Heun equation, of degree $D\ge7$.}
\label{tab:clas1} 
\end{center}
\end{table}

\begin{table}
\begin{center}
{
\small
\begin{tabular}{@{}llcll@{}} 
\hline 
\multicolumn{2}{@{}c}{Exponent differences} & Deg. &
Branching pattern & Covering characterization, \\
\cline{1-2} \, hyperg. & \, Heun & $D$ & above singularities & composition \\
\hline
$\led{\fr12,\fr13,\,\alpha} $
& $\led{\fr13,\fr23,\,\alpha,\,5\alpha}$ & 6
& \branch{\brep23}{\brep31+2+1}{5+1}
& \PH{24}, \FF{9}, \nocomposition \\
& $\led{\fr13,\fr23,\,2\alpha,\,4\alpha}$ & 
& \branch{\brep23}{\brep31+2+1}{4+2}
& \PH{25}, \FF{10}, \comp{\GT2}{3} \\
& $\led{\fr13,\fr23,\,3\alpha,\,3\alpha}$ & 
& \branch{\brep23}{\brep31+2+1}{3+3}
& no covering, \PN{20} \\
& $\led{\fr13,\fr13,\fr13,\,6\alpha}$ & 
& \branch{\brep23}{\brep31+1+1+1}{6}
&  \PH{38}, \comp{\GT2}{\CT3} \\
& $\led{\fr12,\fr12,\,\alpha,\,5\alpha}$ & 
& \branch{\brep22+1+1}{\brep32}{5+1}
& \PH{26}, \nocomposition \\
& $\led{\fr12,\fr12,\,2\alpha,\,4\alpha}$ & 
& \branch{\brep22+1+1}{\brep32}{4+2}
&  \PH{27}, \FF{7}, \FY{13}, \comp{\GT3}{2} \\
& $\led{\fr12,\fr12,\,3\alpha,\,3\alpha}$ & 
& \branch{\brep22+1+1}{\brep32}{3+3}
& \PH{28}, \FF{6}, \FY{12}, \comp{\CT3}{\HT2}\\ 
& $\led{\fr12,\fr23,\,\alpha,\,4\alpha}$ & 5
& \branch{\brep22+1}{\brep31+2}{4+1}
&  \PH{29}, \FF{8}, \nocomposition \\
& $\led{\fr12,\fr23,\,2\alpha,\,3\alpha}$ & 
& \branch{\brep22+1}{\brep31+2}{3+2}
&  \PH{30}, \FF{5}, \nocomposition \\
& $\led{\fr12,\fr13,\fr13,\,5\alpha}$ & 
& \branch{\brep22+1}{\brep31+1+1}{5}
&  \PH{37}, \nocomposition \\ 
& $\led{\fr12,\fr12,\fr13,\,4\alpha}$ & 4
& \branch{\brep21+1+1}{\brep31+1}{4} 
& \PH{36}, \nocomposition \\
\hline 
$\led{\fr12,\fr14,\,\alpha}$
& $\led{\alpha,\,\alpha,\,\alpha,\,5\alpha}$ & 8
& \branch{\brep24}{\brep42}{5+1+1+1}
& no covering, \PN{21} \\
& $\led{\alpha,\,\alpha,\,2\alpha,\,4\alpha}$ & 
& \branch{\brep24}{\brep42}{4+2+1+1}
& \PHforty,  \FE{9},  \FY{24}, \compp{\GT2}{\GT2}{2} \\
& $\led{\alpha,\,\alpha,\,3\alpha,\,3\alpha}$ & 
& \branch{\brep24}{\brep42}{3+3+1+1}
& \PH{20},  \FE{8},  \FY{23}, \comp{\GT2}{4}, \comp{\AT4}{\HT2} \\
& $\led{\alpha,\,2\alpha,\,2\alpha,\,3\alpha}$ & 
& \branch{\brep24}{\brep42}{3+2+2+1}
& no covering, \PN{22} \\
& $\led{2\alpha,\,2\alpha,\,2\alpha,\,2\alpha}$ & 
& \branch{\brep24}{\brep42}{2+2+2+2}
& \PHfortyone,  \FE{7}, \FY{22},
$2\!\times\!2\!\times\!2$ \\ 
& $\led{\fr12,\,\alpha,\,\alpha,\,4\alpha}$ & 6
& \branch{\brep23}{\brep41+2}{4+1+1}
& no covering, \PN{23} \\
& $\led{\fr12,\,\alpha,\,2\alpha,\,3\alpha}$ & 
& \branch{\brep23}{\brep41+2}{3+2+1}
& \PH{25},  \FE{6},  \FY{15}, \comp{\GT2}{3} \\
& $\led{\fr12,\,2\alpha,\,2\alpha,\,2\alpha}$ & 
& \branch{\brep23}{\brep41+2}{2+2+2}
& no covering, \PN{24} \\
& $\led{\fr14,\fr14,\,\alpha,\,5\alpha}$ & 
& \branch{\brep23}{\brep41+1+1}{5+1}
& \PHfortytwo, \nocomposition \\
& $\led{\fr14,\fr14,\,2\alpha,\,4\alpha}$ & 
& \branch{\brep23}{\brep41+1+1}{4+2}
& no covering, \PN{23} \\
& $\led{\fr14,\fr14,\,3\alpha,\,3\alpha}$ & 
& \branch{\brep23}{\brep41+1+1}{3+3}
& \PHfortythree,  \FE{14}, \FY{14}, \comp{3}{\HT2} \\ 
& $\led{\fr12,\fr14,\,\alpha,\,4\alpha}$ & 5
& \branch{\brep22+1}{\brep41+1}{4+1}
& \PHfortyfour, \nocomposition \\
& $\led{\fr12,\fr14,\,2\alpha,\,3\alpha}$ & 
& \branch{\brep22+1}{\brep41+1}{3+2}
& \PH{29},  \FE{11}, \FY{10}, \nocomposition \\ 
& $\led{\fr12,\fr12,\,\alpha,\,3\alpha}$ & 4
& \branch{\brep21+1+1}{\brep41}{3+1}
& \PH{36}, \nocomposition \\
& $\led{\fr12,\fr12,\,2\alpha,\,2\alpha}$ & 
& \branch{\brep21+1+1}{\brep41}{2+2} 
& \PH{35},  \FE{10},  \FY{7}, \comp{\GT2}{\HT2} \\
\hline 
$\led{\fr12,\fr15,\,\alpha}$
& $\led{\fr15,\,\alpha,\,\alpha,\,4\alpha}$ & 6
& \branch{\brep23}{\brep51+1}{4+1+1}
& \PHfortytwo, \nocomposition \\
& $\led{\fr15,\,\alpha,\,2\alpha,\,3\alpha}$ & 
& \branch{\brep23}{\brep51+1}{3+2+1}
& \PH{24},  \FE{15},  \FY{16}, \nocomposition \\
& $\led{\fr15,\,2\alpha,\,2\alpha,\,2\alpha}$ & 
& \branch{\brep23}{\brep51+1}{2+2+2}
& no covering, \PN{25} \\ 
& $\led{\fr12,\,\alpha,\,\alpha,\,3\alpha}$ & 5
& \branch{\brep22+1}{\brep51}{3+1+1}
& \PH{37}, \nocomposition \\
& $\led{\fr12,\,\alpha,\,2\alpha,\,2\alpha}$ & 
& \branch{\brep22+1}{\brep51}{2+2+1}
& \PHfortyfive,  \FE{12},  \FY{11}, \nocomposition \\
\hline 
$\led{\fr12,\fr16,\,\alpha}$
& $\led{\alpha,\,\alpha,\,\alpha,\,3\alpha}$ & 6
& \branch{\brep23}{\brep61}{3+1+1+1}
& \PH{38}, \comp{\GT2}{\CT3} \\
&  $\led{\alpha,\,\alpha,\,2\alpha,\,2\alpha}$ & 
& \branch{\brep23}{\brep61}{2+2+1+1}
&  \PHthirtynine,  \FE{13},  \FY{17}, \comp{\GT2}{3}, \comp{3}{\HT2} \\
\hline 
$\led{\fr13,\fr13,\,\alpha}$
&  $\led{\alpha,\,\alpha,\,\alpha,\,3\alpha}$ & 6
& \branch{\brep32}{\brep32}{3+1+1+1}
& no covering, \PN{26} \\
& $\led{\alpha,\,\alpha,\,2\alpha,\,2\alpha}$ & 
& \branch{\brep32}{\brep32}{2+2+1+1}
&\PH{28}, \comp{\GT{\CT3}}{2} \\ 
& $\led{\fr13,\fr13,\,\alpha,\,3\alpha}$& 4
& \branch{\brep31+1}{\brep31+1}{3+1}
&  \PHfortysix, \FT{4},  \FY{9}, \nocomposition \\
& $\led{\fr13,\fr13,\,2\alpha,\,2\alpha}$ & 
& \branch{\brep31+1}{\brep31+1}{2+2}
&  \PHfortyseven, \FT{3},  \FY{8}, \nocomposition \\
\hline 
$\led{\fr13,\fr14,\,\alpha}$     
& $\led{\fr13,\,\alpha,\,\alpha,\,2\alpha}$ & 4
& \branch{\brep31+1}{\brep41}{2+1+1}
& \PH{36}, \nocomposition \\
\hline 
$\led{\fr14,\fr14,\,\alpha}$     
& $\led{\alpha,\,\alpha,\,\alpha,\,\alpha}$ & 4
& \branch{\brep41}{\brep41}{1+1+1+1}
& \PHfortyeight, \comp{2}{\HT2}\\
\hline
\end{tabular}
}
\bigskip
\caption{The other possible branching patterns of Gauss-to-Heun
  transformations with one free parameter.}
\label{tab:clas2} 
\end{center}
\end{table}

\begin{table}
\begin{center}
{
\small
\begin{tabular}{@{}ccll@{}} 
\hline 
Id\; & Deg.  & Branching pattern & A rational expression for $\varphi(x)$\\
\hline
\PH{1\hphantom{2}} &12 & \branch{\brep26}{\brep34}{9+1+1+1} &
$\fr{64\,x^3(x^3-1)^3}{(8x^3-9)}$ \\ 
\PH{2\hphantom{2}} &  & \branch{\brep26}{\brep34}{8+2+1+1} &
$\fr{27\,x^2(x^2-4)}{4\,(x^4-4x^2+1)^3}$ \\ 
\PH{3\hphantom{2}} & & \branch{\brep26}{\brep34}{6+3+2+1} &
$\fr{27\,(x-1)^3(2x-3)^2(x+3)}{4\,x^3(x^3-6x+6)^3}$ \\ 
\PH{4\hphantom{2}} & & \branch{\brep26}{\brep34}{5+5+1+1} &
$\fr{1728\,x^5(x^2-11x-1)}{(x^4-12x^3+14x^2+12x+1)^3}$ \\ 
\PH{5\hphantom{2}} & & \branch{\brep26}{\brep34}{4+4+2+2} & 
$\fr{27\,x^4(x^2-1)^2}{4\,(x^4-x^2+1)^3}$ \\ 
\PH{6\hphantom{2}} & & \branch{\brep26}{\brep34}{3+3+3+3} &
$\fr{-64\,x^3(x^3-1)^3}{(8x^3+1)^3}$ \\ 
\PH{7\hphantom{2}} &10 & \branch{\brep25}{\brep33+1}{8+1+1} & 
$\fr{-4\,(x+2)(x^3+3x+2)^3}{27\,(3x^2-2x+11)}$ \\
\PH{8\hphantom{2}} & & \branch{\brep25}{\brep33+1}{7+2+1} & 
$\fr{4\,(x+4)\,(x^3-6x-2)^3}{27\,(3x+4)^2(4x-11)}$ \\ 
\PH{9\hphantom{2}} & & \branch{\brep25}{\brep33+1}{5+4+1} & $\fr{-(8x-1)(8x^3+87x^2+96x-64)^3}{2^3 3^{12} \,x^4(x+10)}$ \\ %
\PH{10} & & \branch{\brep25}{\brep33+1}{5+3+2} & $\fr{-(x-3)(81x^3-9x^2-53x-27)^3}{2^{14} 3\,x^3(9x+5)^2}$ \\ %
\PH{11} &9 & \branch{\brep24+1}{\brep33}{7+1+1} & 
$\fr{4(x^3+4x^2+10x+6)^3}{27(4x^2+13x+32)}$ \\ %
\PH{12} & & \branch{\brep24+1}{\brep33}{6+2+1} & 
$\fr{27\,x^2(x-3)}{4\,(x^3-3x^2+1)^3}$ \\ 
\PH{13} & & \branch{\brep24+1}{\brep33}{5+3+1} & 
$\fr{-25(5x^3+45x^2+39x-25)^3}{2^{14}3^3\,x^3(3x+25)}$ \\ %
\PH{14} & & \branch{\brep24+1}{\brep33}{4+3+2} & 
$\fr{27\,x^3(3x-4)^2}{4\,(x^3-3x-4)^3}$ \\ 
\PH{15} &8 & \branch{\brep24}{\brep32+2}{6+1+1} & 
$\fr{64\,x^2(x^2-1)^3}{(8x^2-9)}$ \\ 
\PH{16} & & \branch{\brep24}{\brep32+2}{5+2+1} & $\fr{-4\,x^2(x^2+8x+10)^3}{27(2x+1)^2(4x+27)}$ \\ %
\PH{17} & & \branch{\brep24}{\brep32+2}{3+3+2} & 
$\fr{-64\,x^2(x^2-1)^3}{(8x^3+1)^3}$ \\ 
\PH{18} & & \branch{\brep24}{\brep32+1+1}{7+1} & $\fr{-(x^2-13x+49)(x^2-5x+1)^3}{2^6 3^3 x}$ \\ %
\PH{19} & & \branch{\brep24}{\brep32+1+1}{6+2} & 
$\fr{-64\,x^2}{(x^2-1)^3(x^2-9)}$ \\ 
\PH{20} & & \branch{\brep24}{\brep32+1+1}{4+4} & 
$\fr{16\,x^3(2x+1)(x-4)}{(x^2-2x-2)^4}$ \\ 
\PH{21} &7 & \branch{\brep23+1}{\brep32+1}{6+1} &
$\fr{4(x-1)((1+2\omega)x^2-3x-\omega)^3}{(4-(1+3\omega)x)}$ \\
\PH{22} & & \branch{\brep23+1}{\brep32+1}{5+2} & $\fr{x(2x^2-35x+140)^3}{108(14x-125)^2}$ \\ %
\PH{23} & & \branch{\brep23+1}{\brep32+1}{4+3} & $\fr{-(x+27)(16x^2+80x-243)^3}{2^2 3^3 7^7 x^3}$ \\ %
\PH{24} &6 & \branch{\brep23}{\brep31+2+1}{5+1}& $\fr{-x^2(x-3)(x+5)^3}{64(3x+16)}$ \\ %
\PH{25} & & \branch{\brep23}{\brep31+2+1}{4+2} &
 $\fr{-4\,x^3(x-1)^2(x+2)}{(3x-2)^2}$ \\ 
\PH{26} & & \branch{\brep22+1+1}{\brep32}{5+1} & 
$\fr{1728\,x}{(x^2+10x+5)^3}$ \\ 
\PH{27} & & \branch{\brep22+1+1}{\brep32}{4+2} & 
$\fr{27\,x^2}{4\,(x^2-1)^3}$ \\ 
\PH{28} & & \branch{\brep22+1+1}{\brep32}{3+3} & $\fr{(x^2+6x-3)^3}{(x^2-6x-3)^3}$ \\ %
\PH{29} &5& \branch{2+2+1}{3+2}{4+1} & $\fr{4\,x^2(x+5)^3}{27(5x+27)}$ \\ %
\PH{30} & & \branch{2+2+1}{3+2}{3+2} & $\fr{x^2(9x-5)^3}{4(5x-1)^2}$ \\ %
\PH{31} &4& \branch{2+2}{2+2}{2+2} & 
$\fr{-4\,x^2}{(x^2-1)^2}$ \\ 
\PH{32} &2& \branch{2}{2}{1+1} & $x^2$ \\ %
\PH{33} &3& \branch{3}{3}{1+1+1} & $x^3$ \\ %
\PH{34} & &\branch{3}{2+1}{2+1} & $x\,(4x-3)^2$ \\ 
\PH{35} &4&\branch{4}{2+2}{2+1+1} & $4x^2(1-x^2)$ \\ %
\PH{36} & &\branch{4}{3+1}{2+1+1} &  $-x^3\,(3x+4)$ \\ 
\PH{37} &5& \branch{5}{2+2+1}{3+1+1} & $\fr{(x^2+11x+64)(x+3)^3}{2^63^3}$ \\ %
\PH{38} &6& \branch{\brep23}{6}{3+1+1+1} &
$4\,x^3\,(1-x^3\,)$ \\ %
\hline
\PHthirtynine &6& \branch{\brep23}{6}{2+2+1+1} & $x^2\,(4x^2-3)^2$ \\ 
\PHforty &8& \branch{\brep24}{\brep42}{4+2+1+1} &
$\fr{4x^2(x^2-2)}{(x^2-1)^4}$ \\ 
\PHfortyone & & \branch{\brep24}{\brep42}{2+2+2+2}  &
$\fr{-4\,x^4}{(x^4-1)^2}$ \\ 
\PHfortytwo &6& \branch{\brep23}{4+1+1}{5+1} & $\fr{-(x-1)^4(x^2-6
x+25)}{256\,x}$ \\ %
\PHfortythree & & \branch{\brep23}{4+1+1}{3+3} &
$\fr{27\,x\,(x-4)}{4\,(x^2-4x+1)^3}$ \\  
\PHfortyfour &5& \branch{4+1}{4+1}{2+2+1} &
$\fr{x\,(x-1-2i)^4}{((1+2i)x-1)^4}$ \\ %
\PHfortyfive & & \branch{5}{2+2+1}{2+2+1} & $\fr{x\,(x^2-5x+5)^2}4$ \\ %
\PHfortysix &4& \branch{3+1}{3+1}{3+1} & $\fr{x^3\,(x+2)}{(2x+1)}$ \\ %
\PHfortyseven & & \branch{3+1}{3+1}{2+2} &
$\fr{64\,x\,(x-1)^3}{(8x-9)}$ \\ 
\PHfortyeight & & \branch{4}{4}{1+1+1+1} & $x^4$ \\ %
\hline
\end{tabular}
}
\bigskip
\caption{The Belyi coverings appearing in Gauss-to-Heun pull-backs, 
  up~to M\"obius transformations. }
\label{tab:coverings}
\end{center}
\end{table}

We proceed to explain the results and notation in Tables \ref{tab:clas}--\ref{tab:coverings}.
Let $\omega$ denote a primitive cubic root of unity,
say $\omega=\exp(2\pi i/3)$. In particular, $\omega^2+\omega+1=0$. 

The pull-back transformations from a hypergeometric equation $E_1$
to a Heun equation $E_2$ are classified and demonstrated in the following four steps.
They parallel the principal steps \refpart{i}--\refpart{iii} outlined in the introduction,
with the only difference that Step \refpart{i} is split into two steps.

{\bf Step 1} is determination of possible restrictions on the exponent differences 
of $E_1$ and the degree of the pull-backs. This step is elaborated in \S\ref{subsec:step2}.
The restrictions on the exponent differences determine the {\em type} of possible 
branching patterns, which is by~definition an unordered list of the integers $k\ge 1$
that determine the restricted exponent differences $1/k$.
The following list of types is obtained:
\begin{equation}
(), \ (2), \  (3), \ (2,3), \ (2,4), \ (2,5), \ (2,6), \ (3,3), \ (3,4), \ (4,4).
\end{equation}
The first type $()$ means no restrictions on the parameters of $E_1$. 
We skipped the types $(1)$ and $(2,2)$; they are considered in \cite{VidunasHDD} as mentioned.
The types are indicated by the exponent differences of $E_1$ 
in the first columns of Tables \ref{tab:clas}, \ref{tab:clas2}, and the whole Table~\ref{tab:clas1}
is devoted to the type $(2,3)$. The entries of different types are separated by horizontal lines.
The pull-back degree is given in the third columns of 
Tables \ref{tab:clas}, \ref{tab:clas2}, and the second column of Table~\ref{tab:clas1}. 
The maximal degree is 12. It occurs for the type $(2,3)$ only.

{\bf Step 2} is determination of possible branching patterns. The method is explained in 
\S \ref{subsec:step3}. The result is presented by the fourth columns of
Tables \ref{tab:clas}, \ref{tab:clas2}, and the third column of Table \ref{tab:clas1}.
Generally, we indicate a branching pattern by an (unordered) list of three
unordered partitions of its degree~$D$, separated by the equality signs.
The partitions specify the branching indices in each of the three branching
fibers of a Belyi covering. Besides, we use the abbreviation \brep{k}{n}~ 
for a partition block $k+\dots+k$ ($n$~times). 
In Tables \ref{tab:clas}, \ref{tab:clas1}, \ref{tab:clas2}, 
the symbol \brep{k}{n} specifically means presence of $n$~points of~$\EQN_2$ 
with the branching order $k$ above a singular point of $\EQN_1$ 
with the the exponent difference $1/k$.
By part \refpart{c} of Lemma \ref{lem:genrami}, each of the $n$ points will be either 
ordinary or an irrelevant singularity for $\EQN_2$. By the described convention,
the branching patterns for pull-back transformations with $M$ free parameters have 
$3-M$ numbers (i.e., branching orders) inclosed in square brackets, and exactly $4$ 
non-bracketed numbers representing the $4$~singular points of $\EQN_2$. 

In total,  we get a list of 89 branching patterns, though some of the patterns differ only
by the square-brackets specification of ordinary points of $E_2$. For example, two degree 
3 branching patterns in Table \ref{tab:clas} are the same, leading to the same 
cubic covering $H_{34}$  (identified in the last column).
The exponent differences of $E_2$ are determined by $E_1$ and the branching pattern,
and are given by the second columns of Tables \ref{tab:clas}, \ref{tab:clas2} and 
the first column of Table \ref{tab:clas1}.

{\bf Step 3} is computation of the Belyi coverings $\varphi\colon\PP^1_x\to\PP^1_z$. 
Generally, computation of Belyi maps with a given branching pattern is a difficult problem. 
However, the maximal degree implied by the possible branching patterns is just 12. 
With the aid of modern computer algebra systems 
this problem is tractable for coverings of degree $12$ or less, 
even using a straightforward Ansatz method with undetermined coefficients.
Most of the Belyi maps are actually known in the literature, if only because
the Belyi maps of the type (2,3) occur in Herfurtner's list~\cite{Herfurtner91}
of elliptic surfaces with four singular fibers. Specifically, the 
$\mathcal{J}(X,Y)$-expressions in \cite[Table~3]{Herfurtner91} are homogeneous 
expressions of  the Belyi maps $H_1,\dots,H_{38}$ up to M\"obius transformations.
Moreover, the same coverings basically appear in pull-backs between 
hypergeometric equations, because a free parameter can always be specialized so
to reduce the Heun equation $E_2$ to a hypergeometric (or simpler) equation.

The computational issues of Step 3 are discussed in \S \ref{subsec:step3}. 
Complementarily, \S \ref{sec:6} presents an elegant approach
to show non-existence of Belyi maps with many branching patterns.
The full list of computed Belyi maps is given in Table~\ref{tab:coverings},
and further commented in \S \ref{sec:5}.
The last columns of Tables \ref{tab:clas}, \ref{tab:clas1}, \ref{tab:clas2}
identify the Belyi map for each possible pull-back transformation.
These columns also specify the {\em Coxeter decompositions} \cite{Felikson98} and
{\em divisible tilings} \cite{Broughton2000} for the Schwarz maps associated to the pulled-back 
Heun's equation $E_2$ (by various $F$-numbers, as explained in \S \ref{subsec:Ftilings}),
and describe 
composite transformations by product expressions indicating degrees
of occurring  indecomposable transformations. 
The product notation has to be followed
from right to left to trace the composition from the starting hypergeometric equation.
The factor $2_H$ denotes quadratic Heun-to-Heun transformation 
(\ref{eq:quadtr3}). Here is the meaning of other indexed degrees: 
$\CT3$ denotes the cyclic covering \PH{33} with the branching pattern \branch{3}{3}{1+1+1},
while $\AT4$ and $\BT4$ stand for the coverings \PH{36} (\branch{4}{3+1}{2+1+1})
and \PH{46} (\branch{3+1}{3+1}{3+1}), respectively.
The unindexed numbers 3 and 4 denote the frequent coverings \PH{34} (\branch{3}{2+1}{2+1})
and \PH{47} (\branch{3+1}{3+1}{2+2}), respectively. 
In any composition, there is exactly one factor representing an indecomposable Gauss-to-Heun
transformation; it is the first one from the left which is not~$2_H$. The other factors
to the right represent pull-backs between hypergeometric equations.
The notation $\DDT$ indicates a composition of quadratic transformations that can be realized
in multiple ways, possibly including $\HT2$; see (\ref{eq:compDD}) below for the most typical
example. The compositions are considered more thoroughly in \S \ref{subsec:composite} 
and in \cite[Appendix B]{Vidunas2009b}.

There are 27 different branching patterns for which there is no Belyi map.
The non-existence in all these cases can be elegantly shown by considering
implied (but not possible) pull-back transformations between Fuchsian equations,
as explained in \S \ref{sec:6}. The indexed $N$-notation refers to Table \ref{tab:nonexist} below.
For each branching pattern except two leading to $\PH{21}$ and $\PH{44}$, there
is at most one covering (and one pull-back) up to M\"obius transformations. 
The coverings $\PH{21}$ and $\PH{44}$
are defined, respectively, over $\QQ(\omega)$ and $\QQ(i)$. 
In either of these cases, we actually have a complex-conjugated pair of Belyi coverings.
Table~\ref{tab:coverings} lists 48 different coverings, though $\PH{21}$ and $\PH{44}$
should be properly counted twice. It is instructive to compare the branching pattern and
the orders of vertices and cells of the {\em dessins d'enfant} in Figure \ref{fg:dessins}.
In total, we count 61 parametric pull-backs 
among the entries of Tables \ref{tab:clas}, \ref{tab:clas1}, \ref{tab:clas2}.
Of them, 28 are composite. Evidently, some of the 48 coverings appear in more than one 
pull-back.  Accordingly, the symbol~\brep{k}{n} in Table~\ref{tab:coverings} 
merely indicates  presence of $n$~points of branching order~$k$ in the same fiber.
The coverings \PH{20}, \PH{24}, \PH{25}, \PH{28}, \PH{29},
\PH{34}, \PH{35}, \PH{37}, \PH{38}, \PH{42}, \PH{47} appear twice in
Tables \ref{tab:clas}, \ref{tab:clas1}, \ref{tab:clas2}, while \PH{36} three times.

{\bf Step 4} is derivation of identities between standard $\hpgo{2}{1}(z)$ and $\Hn(x)$ solutions
of the related hypergeometric and Heun equations, with $z=\varphi(x)$. This gives 
{\em Heun-to-hypergeometric reduction} formulas, expressing found Heun functions in
terms of the better understood Gauss hypergeometric functions. 
This final step is comprehensively considered in the parallel paper \cite{Vidunas2009b}
by the same authors. In particular, \cite[\S 3]{Vidunas2009b} explains
the technical issue of choosing the gauge prefactor $\theta(x)$ in 
pull-back transformations (\ref{eq:algtransf}). The transformations without a prefactor
(i.e., $\theta(x)=1$) are classified by Maier in \cite{Maier03}. The branching patterns 
for these pull-backs typically have a fiber with just one point, 
and that point is a singularity  for $E_2$. 
There are 7 of these pull-back transformations. Their type is $()$, $(2)$, $(3)$ or $(2,3)$,
and the coverings are numbered consequently from $H_{32}$ to $H_{38}$. 
Formulas without a prefactor arise from the transformations of Tables \ref{tab:clas}, \ref{tab:clas1} 
realized by these coverings, 
except for the type $(3)$ transformation  with the covering $H_{34}$.
The well-known quadratic transformation (\ref{eq:quadtr2})
is described at the beginning of this article.

Hereby we complete the description of four classification steps. At the same time,
we explained the results and notation in Tables  \ref{tab:clas}--\ref{tab:coverings}.
The next two subsections give a methodological proof of Steps 1 and 2.
Section \ref{sec:5} discusses computational issues of Step 3, composite coverings,
and relations of the recorded transformations to the Herfurtner's list \cite{Herfurtner91}
of elliptic surfaces and Felikson's list \cite{Felikson98} of Coxeter decompositions. 
Section \ref{sec:6} describes the elegant approach of proving non-existence
of Belyi coverings with certain branching patterns, and applies it to Tables 
\ref{tab:clas}--\ref{tab:clas2} and the Miranda-Persson list \cite{Miranda89}
of degree 24 branching patterns.

\subsection{Step 1: Possible restricted exponent differences and degree}
\label{subsec:step2}

We are looking for the Belyi coverings $\varphi\colon\PP_x^1\to\PP_z^1$
that pull-back a hypergeometric equation $E_1$ to Heun's equation $E_2$. 
We assume that $E_1$ 
is not specifically of the form $E(1,\alpha,\beta)$ or $E(1/2,1/2,\alpha)$,
because then either it has a logarithmic singularity 
(if~$\beta\neq\pm\alpha$ by \cite[Lemma 5.1]{Vidunas2009}; it
would not  contribute extra non-singular points above $\{0,1,\infty\}\subset\PP_z^1$ 
necessary for new cases of Gauss-to-Heun pull-backs), 
or it has cyclic (if $\beta=|\alpha|$)
or dihedral monodromy as explored in \cite{VidunasHDD}.

We restrict $m\in\{0,1,2\}$ exponent differences of the general hypergeometric 
equation (\ref{eq:GHE}) to the reciprocals of integers $k>1$,
and look for particular cases when part {\em (c)} of Lemma \ref{lem:genrami}
allows enough non-singular points above $\{0,1,\infty\}\subset\PP_z^1$.
The degree of $\varphi$ is denoted by $D$.

First, assume that $m=0$. This puts no restriction on the exponent differences of~$\EQN_1$, 
so generally 
all points above $z=0,\,1,\,\infty$ will be singularities of~$\EQN_2$. 
There will be exactly $D+2$ singular points by
Lemma~\ref{lem:genrami2}, and we wish the transformed equation to be Heun's.
Hence $D+2\leq 4$, so that $D\leq 2$.  If $D=1$ then $\varphi$~is 
a M\"obius transformation and does not 
alter the number of singular points, hence $\EQN_2$~will have only three.  If $D=2$, then
$\varphi$~is a quadratic covering with the branching pattern ${\branch{2}{2}{1+1}}$. 
The pull-back is then the well-known quadratic transformation (\ref{eq:quadtr2}),
applicable to any hypergeometric equation.
We do not need to consider quadratic transformations ($D=2$) subsequently,
nor the case $m=0$ in more detail in the other steps.

For $m\in\{1,2\}$, the number of non-singular points above the restricted singularities 
of $E_1$ must be at least $(D+2)-4=D-2$.

If $m=1$, we allow two free parameters.
We restrict just one exponent difference of~$\EQN_1$ to
equal $1/k$, with integer $k>1$.  The pulled-back equation~$\EQN_2$ will have
at~most $\lfloor D/k\rfloor$ ordinary points above
$\{0,\,1,\,\infty\}\subset\PP_z^1$ by Lemma~\ref{lem:genrami}, and one must
have
\begin{equation}
\left\lfloor \frac{D}k \right\rfloor\ge D-2.
\end{equation}
This leads to the Diophantine inequality
\begin{equation}
\frac2{D}+\frac1k\geq 1.
\end{equation}
For $k>1$ and $D>2$, we have the following possibilities:
\begin{equation}
(k,D)\in \{ (2,3),\;  (2,4),\; (3,3) \}.
\end{equation}
The resulting branching patterns are of types $(2)$, $(3)$, according to~$k$.

If $m=2$, we allow one free parameter.
Suppose that the restricted exponent differences of~$\EQN_1$ equal $1/k$, $1/\ell$, 
where $k,\ell$ are integers.  We assume \mbox{$1<k\le\ell\le D$}
without loss of generality; the last inequality allows actual utilization of the restriction $1/\ell$.
The transformed equation has at~most
$\lfloor D/k\rfloor+\lfloor D/\ell\rfloor$ ordinary points above
$\{0,\,1,\,\infty\}\subset\PP_z^1$.  Similarly to the above, one must have
\begin{equation}
\left\lfloor \frac{D}k\right\rfloor+\left\lfloor\frac{D}{\ell}\right\rfloor\ge D-2,
\end{equation}
which leads to the weaker Diophantine inequality
\begin{equation}
\frac2{D}+\frac1k+\frac1\ell \geq 1.
\end{equation}
Discarding $k=\ell=2$, we get the following possibilities for
$k,\ell$ and for the upper bound~$D_{\max}$ on the degree~$D$:
\begin{equation}
\label{eq:ledt}
(k,\ell,D_{\max})\in\{
(2,3,12),\; (2,4,8),\; (2,5,6),\;  (2,6,6),\;
(3,3,6),\; (3,4,4), \; (4,4,4) \}.
\end{equation}
The resulting branching patterns are the seven types $(2,3)$,
$(2,4)$, $(2,5)$, $(2,6)$, $(3,3)$, $(3,4)$, $(4,4)$, respectively.

\subsection{Step 2: Possible branching patterns}
\label{subsec:step3}

Here we look at each possible type and degree, and determine all branching patterns fitting them.
The constraint that there must be at~least $D-2$ ordinary points above~$\{0,1,\infty\}\subset\PP_z^1$ 
will usually require taking the number of ordinary points above the points with
restricted exponent differences is \emph{maximal}, i.e., equal to
$\left\lfloor D/k \right\rfloor$ or $\left\lfloor D/\ell \right\rfloor$.

The \emph{a priori} possible branching patterns for the case $m=1$
are straightforward to determine. They are listed in the
fourth column of Table~\ref{tab:clas}.  That table 
is comparable to \cite[Table~1b]{Movasati2009}.

In the case $m=2$, we start with the coverings of the type $(2,3)$
of the maximal degree $D=12$, 
as in Table~\ref{tab:clas1}. There must be $12-2=10$ ordinary points
above the two singular points of $\hpgde{1/2,1/3,\alpha}$ with exponent
differences $1/2$ or~$1/3$; all $x$-points in these two fibers must be
ordinary, as $\left\lfloor12/2\right\rfloor+\left\lfloor12/3\right\rfloor=10$.  
The third fiber is a partition of 12 with 4 parts. There are $15$ such partitions,
and they are all listed in the third column of Table~\ref{tab:clas1}.
Next, there are no transformations of degree~$11$, because
$11-2>\left\lfloor11/2\right\rfloor+\left\lfloor11/3\right\rfloor$ and
there would not be enough ordinary points in the two fibers.  In a similar
way, the pull-back coverings of degree $D=10$, $9$, $8$ or~$7$ must have
the maximal number of ordinary points in the two restricted fibers; and all
branching patterns consistent with this constraint are listed.
The branching patterns of type $(2,3)$ continue in Table~\ref{tab:clas2}.
The degrees $D=6$, $4$ require less than 
$\left\lfloor D/2\right\rfloor+\left\lfloor D/3\right\rfloor$ ordinary points
in the restricted fibers, and there is some choice of how to split 
a bracketed number $[2]$ or $[3]$ into a pair of non-bracketed numbers, 
though at least one bracketed number must remain in the two restricted fibers. 
For $D=5$, there is a choice of splitting (or not splitting) the number $2$ 
in the $[3]$ fiber. In total, we get 53 branching patterns of the type $(2,3)$,
all different. 

The other types $(2,4)$, $(2,5)$, $(2,6)$, $(3,3)$, $(3,4)$, $(4,4)$ similarly
give less numerous sets of branching patterns, some of them coinciding mutually
or with previously encountered ones.

\section{The Belyi coverings}
\label{sec:5}

First, this section briefly explains computation of Belyi maps and utilizing specialization
of parametric Gauss-to-Heun transformations to 
transformations between  hypergeometric equations. 
In \S\S \ref{subsec:Herfurtner}--\ref{subsec:Ftilings},
we explain how the $H$-numbering of Table~\ref{tab:coverings} comes partly 
from an algebraic-geometric classification of Herfurtner~\cite{Herfurtner91},
and clarify the various $F$-numbers in the last columns of Tables \ref{tab:clas}--\ref{tab:clas2}
as representing {\em Coxeter decompositions} of Felikson~\cite{Felikson98}
and {\em divisible tilings} of \cite{Broughton2000}. Lastly, in \S \ref{subsec:composite}
we examine the composite transformations among our results.

\subsection{Computational issues}
\label{subsec:step4}

To compute the Belyi maps $\varphi\colon\PP^1_x\to\PP^1_z$ with a given branching pattern
means to find all rational functions $\varphi(x)$ such that the numerators of $\varphi(x)$,
$1-\varphi(x)$ and the denominator of $\varphi(x)$ factor according to the branching pattern. 
A straightforward Ansatz method with undetermined coefficients can be used for low degree
coverings. Modern computer algebra systems (such as {\sf Maple} and {\sf Mathematica})
can handle the resulting systems of algebraic equations easily if the degree of $\varphi(x)$ 
is 12 or less. More cannily, one can consider factorization of the numerators of the logarithmic
derivatives of $\varphi(x)$ and $\varphi(x)-1$ as in \mbox{\cite[\S\,3]{Vidunas2005}}. 
For example, to determine $H_1$, one is looking for a constant $c$ and
monic polynomials $P,Q,R$ of degree 4, 3, 6, respectively, such that $\varphi(x)=c\,P^3/Q$ and
\mbox{$\varphi(x)-1=c\,R^2/Q$}. To find these polynomials, one considers
\begin{equation}
\frac{\varphi'(x)}{\varphi(x)}=\frac{3P'}{P}-\frac{Q'}{Q} \stackrel{!}=\frac{9R}{P\,Q},\qquad
\frac{(\varphi(x)-1)'}{\varphi(x)-1}=\frac{2R'}{R}-\frac{Q'}{Q} \stackrel{!}=\frac{9P^2}{R\,Q}.
\end{equation}
Zeroes of the derivatives are the branching points other than in the denominators,
and the factor 9 is determined by local consideration at $x=\infty$.  
The whole polynomial $R$ can be eliminated symbolically using the first identification,
and the resulting equation system for the undetermined coefficients of $P$, $Q$ is rather transparent.
In general, a covering with a given branching pattern may
not exist, or there may be several Belyi maps (up to M\"obius equivalence)
or even several $\overline{\QQ}/\QQ$-Galois orbits of Belyi maps with the same branching pattern.
The Galois action on the Belyi maps and their {\em dessins d\'enfant} is of primary interest
to Grothendieck's theory \cite{Schneps94}, \cite{Shabat2000}.

Less demandingly, one may notice that the free parameter of our Gauss-to-Heun transformations
can be specialized so that to the pulled-back Heun equation has actually less than 4 singular points.
Therefore, the Belyi coverings must appear in the classification \cite{Vidunas2009} of 
Gauss-to-Gauss transformations 
in principle,  though there are a few infinite families of those transformations 
(for degenerate, dihedral, algebraic or elliptic hypergeometric functions). 
In particular, each branching pattern of Table~\ref{tab:clas} 
can be found in \mbox{\cite[Table~1]{Vidunas2009}}, except for \branch{2+2}{2+2}{2+2} which
corresponds to the transformation $\hpgde{1/2,1/2,\alpha}\pback{4}\hpgde{1,2\alpha,2\alpha}$ 
briefly mentioned in \cite[p.~161]{Vidunas2009}.
The branching patterns of Tables \ref{tab:clas1} and~\ref{tab:clas2} (with
$m=1$ free parameter) can be handled similarly, yielding reductions 
of one-parameter Gauss-to-Heun transformations to 
zero-parameter pull-backs between hypergeometric functions.
For example, the covering \PH{27} implies the hypergeometric transformations
$\hpgde{1/2,1/3,1/2}\pback{6}\hpgde{1/2,1/2,2}$
and $\hpgde{1/2,1/3,1/4}\pback{6}\hpgde{1/2,1/2,1/2}$.
These specializations reductions are possible whenever there is a branching point with a free
exponent difference.  Among~the relevant branching patterns, 
only the last one 
(\branch{4}{4}{1+1+1+1}) in Table~\ref{tab:clas2} does not satisfy this
condition. But even~it represents a nominally hypergeometric
transformation, namely $\hpgde{\alpha,\alpha,1}\pback{4}\hpgde{4\alpha,4\alpha,1}$.
Section \ref{sec:6} gives more details for obtaining the list of Gauss-to-Heun pull-backs
from the classification in \cite{Vidunas2009}. In particular, the non-unique coverings 
$\PH{21}$ and $\PHfortyfour$ come from Lemma \ref{lem:elliptic}.

\subsection{The Herfurtner classification}
\label{subsec:Herfurtner}

Pull-back transformations from hypergeometric equations of the form
$\hpgde{1/2,1/3,\alpha}$ to Heun equations have a close relation to
elliptic surfaces over $\CC(x)$ with $4$ singular
fibers~\cite{Herfurtner91,Movasati2009}.  The Belyi coverings
$z=\varphi(x)$ that induce these transformations appear as $j$-invariants
of the elliptic surfaces, with $z$ equal to $\mathcal{J}\defeq j/1728$, the
traditional Klein $j$-invariant.

The elliptic surfaces with $4$~singular fibers are classified by
Herfurtner~\cite{Herfurtner91}.  His article lists $50$~configurations of
singular fibers which give such elliptic surfaces, and for each
configuration, supplies a formula $\mathcal{J}=\mathcal{J}(X,Y)$ which is a
projectivized version of $z=\varphi(x)$, up~to a M\"obius transformation
of~$x$ and a permutation of $z=0,1,\infty$.  Heun equations arise from $38$
of his $50$~cases, as Movasati and Reiter~\cite{Movasati2009} recently
observed.  We adopt the enumeration of~\cite[Table~1]{Movasati2009}, and
denote these $38$~Belyi coverings of Herfurtner, which were not
originally numbered, by \PH{1} to \PH{38}.  
The ordering is by degree in two ranges, as evident in Table~\ref{tab:coverings}:
decreasing in the range $H_{1},\dots,H_{31}$, and increasing in the range
$H_{32},\dots,H_{38}$.

By examining Table \ref{tab:clas1} and the upper part of Table \ref{tab:clas2}, 
one finds that the 
coverings $H_1,\dots,H_{30}$ and $H_{36},\dots,H_{38}$ induce 
Gauss-to-Heun pull-backs of the type $(2,3)$ with one free
parameter. These transformations use each of these $34$
coverings exactly once, and no~other coverings appear.  
The ordering by decreasing degree make the $H$-numbers appear
ordered in Table \ref{tab:clas1}, and almost ordered in
the upper part of Table \ref{tab:clas2}.
By examining Table~\ref{tab:clas},  one finds Herfurtner's coverings $H_{31},\dots,H_{35}$
(with $H_{34}$ appearing twice) and a ``new" covering $H_{47}$. 
The covering $H_{47}$ cannot pull-back $\hpgde{1/2,1/3,\alpha}$ to a Fuchsian equation 
with exactly 4 singularities. The pattern \branch{\brep31}{2+1}{2+1} for $\PH{34}$
cannot be refined to such a pull-back from $\hpgde{1/2,1/3,\alpha}$ either,
but this is possible for the other $\PH{34}$ parsing \branch{\brep21+1}{2+1}{3}.
This explains why $\PH{34}$ appears in Herfurtner's list once.

Some of Herfurtner's coverings additionally induce one-parameter Gauss-to-Heun
transformations of types $(2,4)$, $(2,5)$, etc., as evident in
Table~\ref{tab:clas2}.  But $10$~extra coverings appear in the latter sections of that table;
they have no interpretation in~terms of elliptic surfaces.  
We denote them $\PH{39},\dots,\PH{48}$, ordered somewhat arbitrarily
in the lower part of Table~\ref{tab:coverings}.
The covering~$\PHfortyseven$ induces transformations of the types $(2)$ and $(3,3)$.

\subsection{Coxeter decompositions}
\label{subsec:Ftilings}

Recall that a \emph{Schwartz map} for an second order differential equation in
the complex domain is a map $\CC\to\CC$ defined as the ratio of a pair of
independent solutions of the differential equation~\cite{Beukers2007}.
Consider a hypergeometric equation with real exponent differences $(\alpha,\beta,\gamma)$
satisfying $0\le \alpha,\beta,\gamma<1$.  The image of the upper half plane
under its Schwarz map is a curvilinear \emph{Schwarz triangle}; the sides
are line or circle segments, and the angles are equal to
$\pi\alpha,\pi\beta,\pi\gamma$.  Similarly, consider a Heun equation with real exponent
differences $(\alpha,\beta,\gamma,\delta)$ satisfying $0\le
\alpha,\beta,\gamma,\delta<1$.  The image of the upper half plane under its
Schwarz map is a curvilinear \emph{Schwarz quadrangle}, with the same
kind of sides, and angles are equal to $\pi\alpha,\pi\beta,\pi\gamma,\pi\delta$.

It was noticed by Hodgkinson \cite{Hodgkinson18,Hodgkinson20} that if the covering $\varphi(x)$
of a pull-back transformation between hypergeometric equations is defined over~$\RR$,
the analytic continuations of their solutions according to the Schwarz reflection principle are
compatible. In consequence, the covering~$\varphi$ (of degree~$D$, say)
will induce a subdivision of a Schwarz triangle of the pulled-back hypergeometric equation
into $D$~Schwarz triangles of the original hypergeometric equation. 
Examples of such subdivisions are given in \cite[Figure 1]{Vidunas2005}.

Similarly, suppose we have a Gauss-to-Heun 
transformation defined over $\RR$. In particular, the fourth singular point $x=t$ is real. 
Then the analytic continuations of the hypergeometric and Heun solutions
according to the Schwarz reflection principle are compatible, and
the covering~$\varphi$ (of degree~$D$) will induce a subdivision of a 
Schwarz quadrangle of the Heun equation into
$D$~Schwarz triangles of the hypergeometric equation.

In the context of hyperbolic geometry, the possible subdivisions of
curvilinear quadrangles (or triangles) into curvilinear triangles have been classified by
Felikson~\cite{Felikson98}; they are called \emph{Coxeter decompositions}.
The triangles have angles $\pi\alpha,\pi\beta,\pi\gamma$ satisfying
$\alpha+\beta+\gamma<1$.  The Coxeter decompositions with 
a free (angle) parameter are depicted in Figures 10, 11, 14 in~\cite{Felikson98}.
The subdivisions of Schwarz quadrangles into Schwarz triangles
induced by our Gauss-to-Heun transformations defined over~$\RR$
have the same shape. In Tables \ref{tab:clas}--\ref{tab:clas2}, 
\begin{itemize}
\item the notation \FF{k} refers to the $k$th subdivision picture in \cite[Figure 14]{Felikson98};
these subdivisions are applicable to Gauss-to-Heun pull-backs of the type $(2,3)$;
\item \FE{k} similarly refers to \cite[Figure 11]{Felikson98};
these subdivisions are applicable the pull-backs of the types $(2)$, $(2,4)$, $(2,5)$, $(2,6)$;
\item \FT{k} similarly refers to \cite[Figure 10]{Felikson98};
these subdivisions are applicable the pull-backs of the type $(3)$ or $(3,3)$.
\end{itemize}

Figure \ref{fg:felixon}\refpart{a} depicts the Coxeter dexomposition \FE{13} of a quadrangle with the
angles $\pi\alpha,\pi\alpha,2\pi\alpha,2\pi\alpha$ 
into 6 hyperbolic triangles with the angles $\pi/2,\pi/6,\pi\alpha$. 
It gives a decomposition of a Schwarz quadrangle for 
$\heunde{\alpha,\alpha,2\alpha,2\alpha}$ into Schwarz triangles for $\hpgde{1/2,1/6,\alpha}$
induced by the type (2,6) transformation with the covering $H_{39}$. 
The Schwarz reflection principle is applied to a few edges intersecting at a common vertex.
The decompositions $3\cdot 2$ and $2_H\cdot 3$ are clearly visible in the Coxeter decomposition.
Consequently, the picture also illustrates the decomposition \FT{2} of the same quadrangle into 3 triangles 
with the angles $\pi/3,\pi\alpha,\pi\alpha$, and the decomposition \FF{2} of a quadrangle with the angles
$\pi/2,\pi/2,\alpha,2\alpha$.  Both decompositions are induced by the cubic covering $H_{34}$.
The factor $2_H$ represents a Schwarz reflection between two smaller quadrangles.

Figure \ref{fg:felixon}\refpart{b} is not a quadrangle, of course.  But it contains two Coxeter 
decompositions for Gauss-to-Heun transformations of the type $(3,3)$. 
If we remove the upper black triangle, we get the decomposition \FT{3} of  a quadrangle
with the angles $\pi/3,\pi/3,2\pi\alpha,2\pi\alpha$. If the left white triangle is removed, 
the decomposition \FT{4} of  a quadrangle with the angles $\pi/3,\pi/3,\pi\alpha,3\pi\alpha$
is obtained. The coverings are $H_{47}$ and $H_{46}$, respectively.

\begin{figure}
\[ \begin{picture}(340,285)
\put(141,88){\includegraphics[height=214pt]{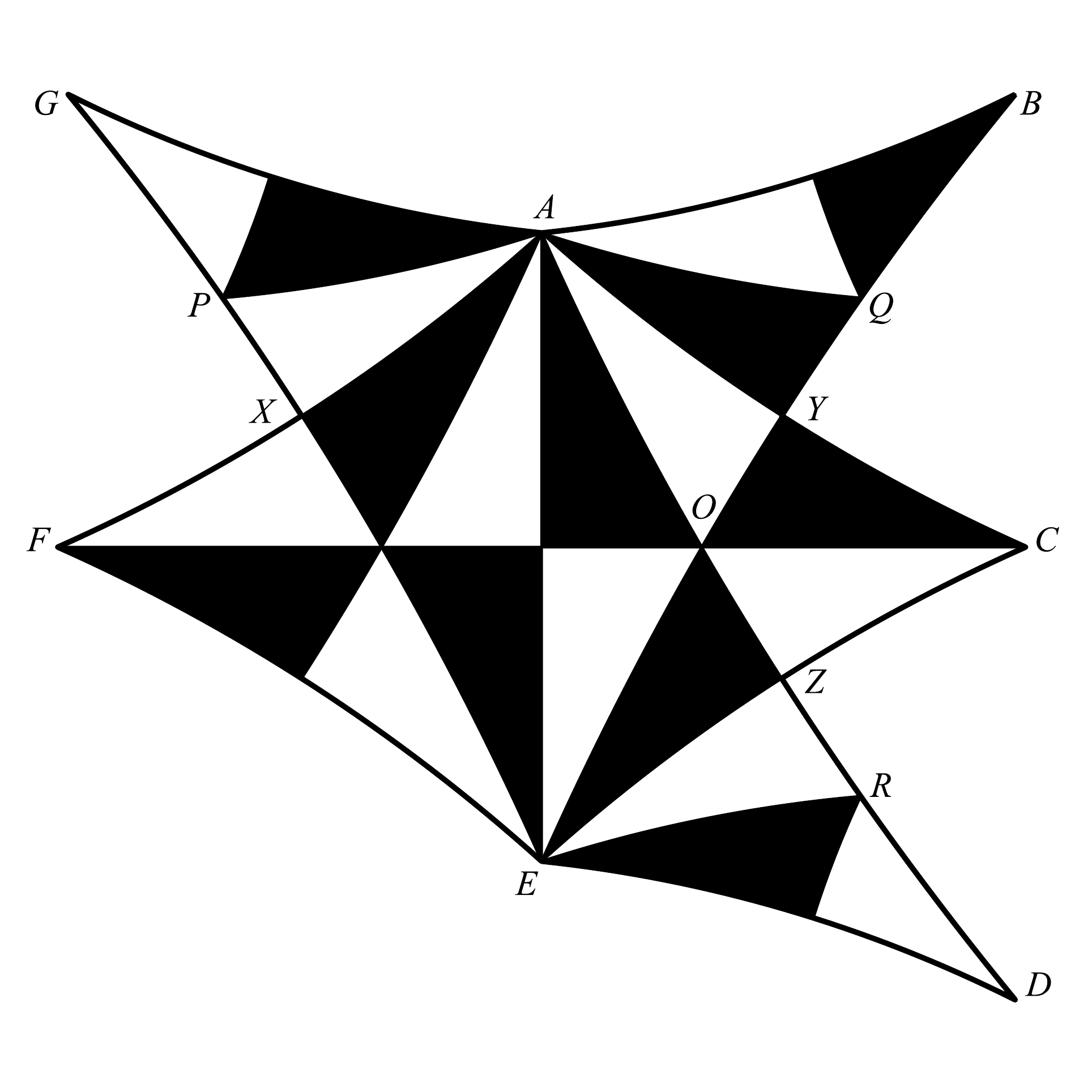}}
\put(-12,-46.5){\includegraphics[height=214pt]{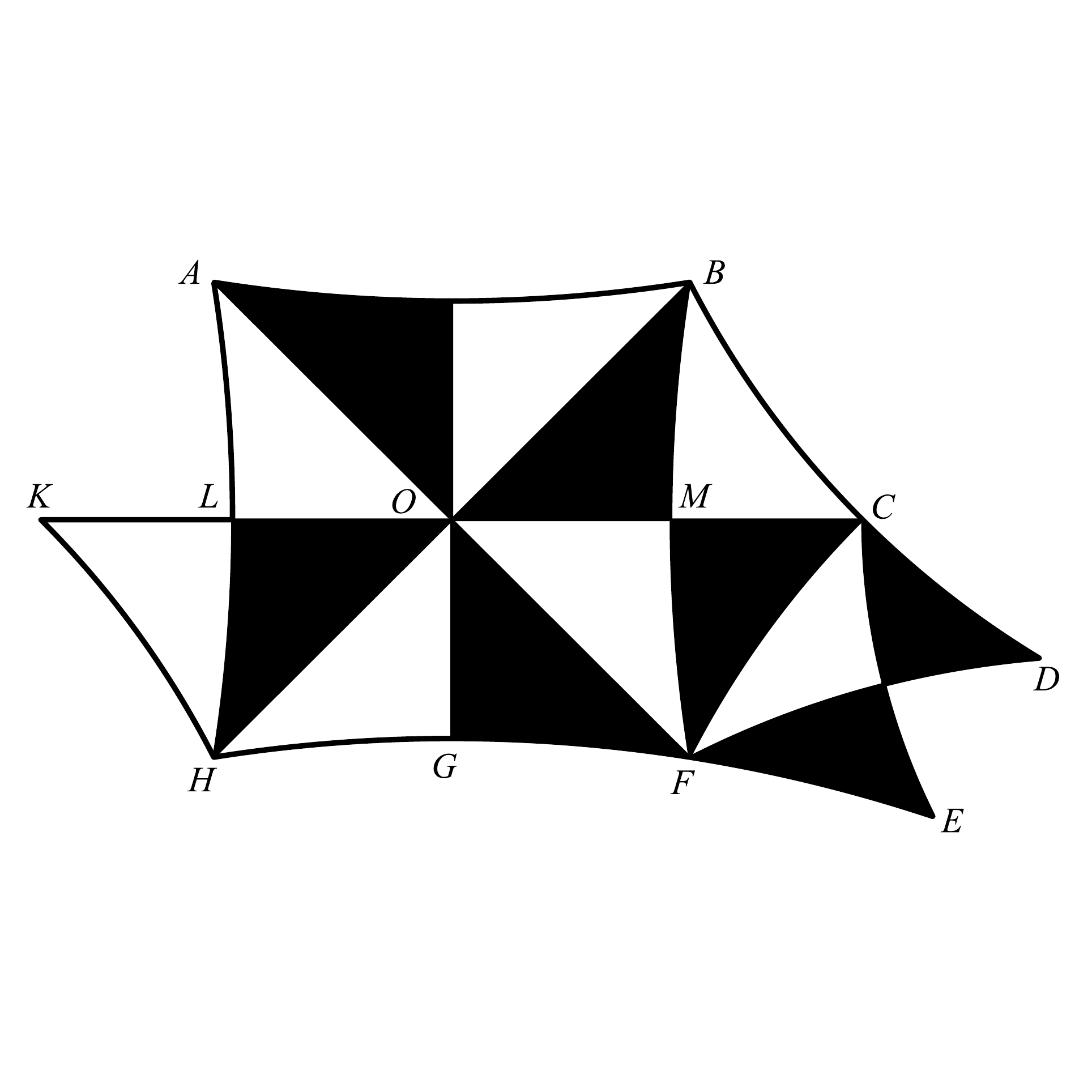}}
\put(-4,202){\includegraphics[height=102pt]{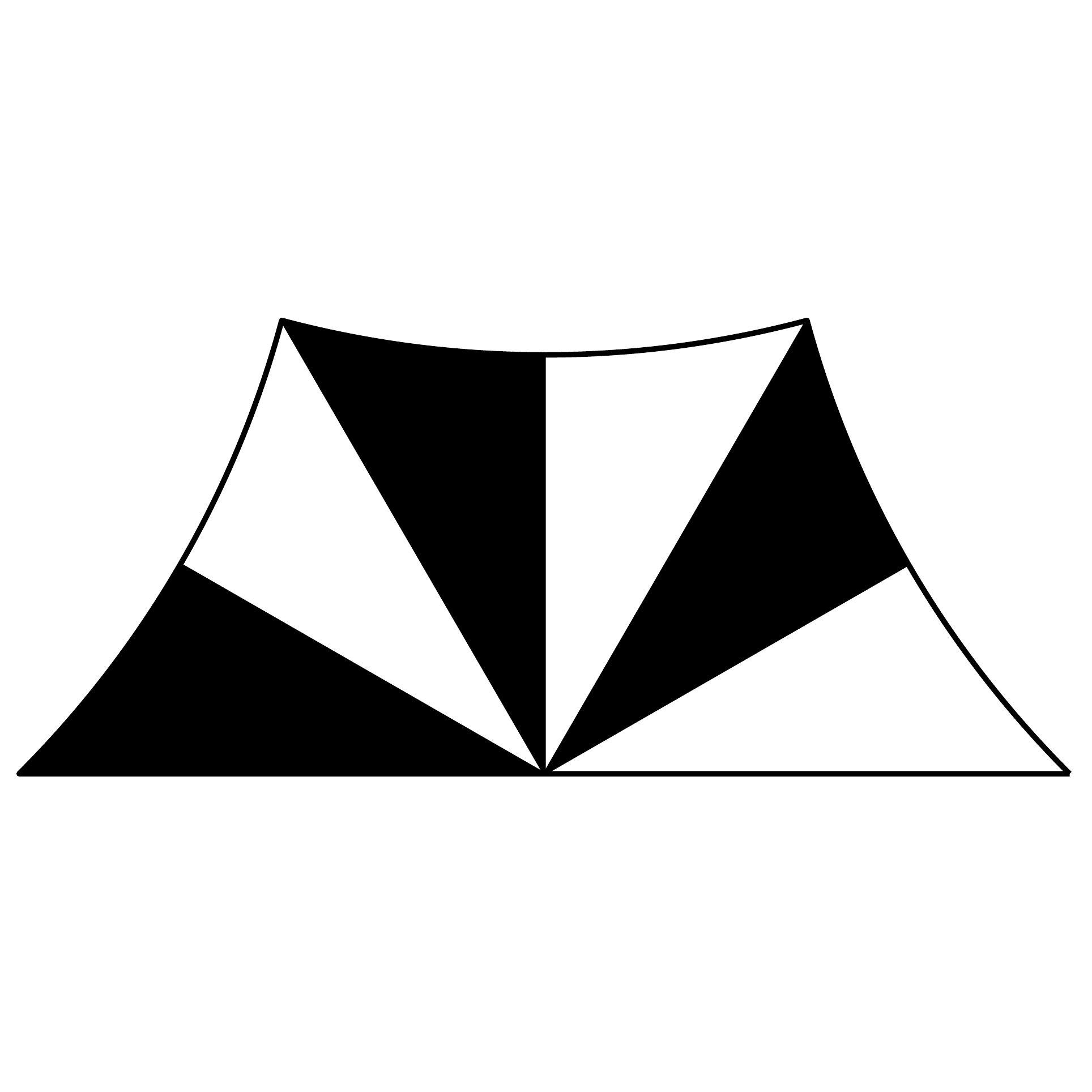}}
\put(-4,129){\includegraphics[height=90pt]{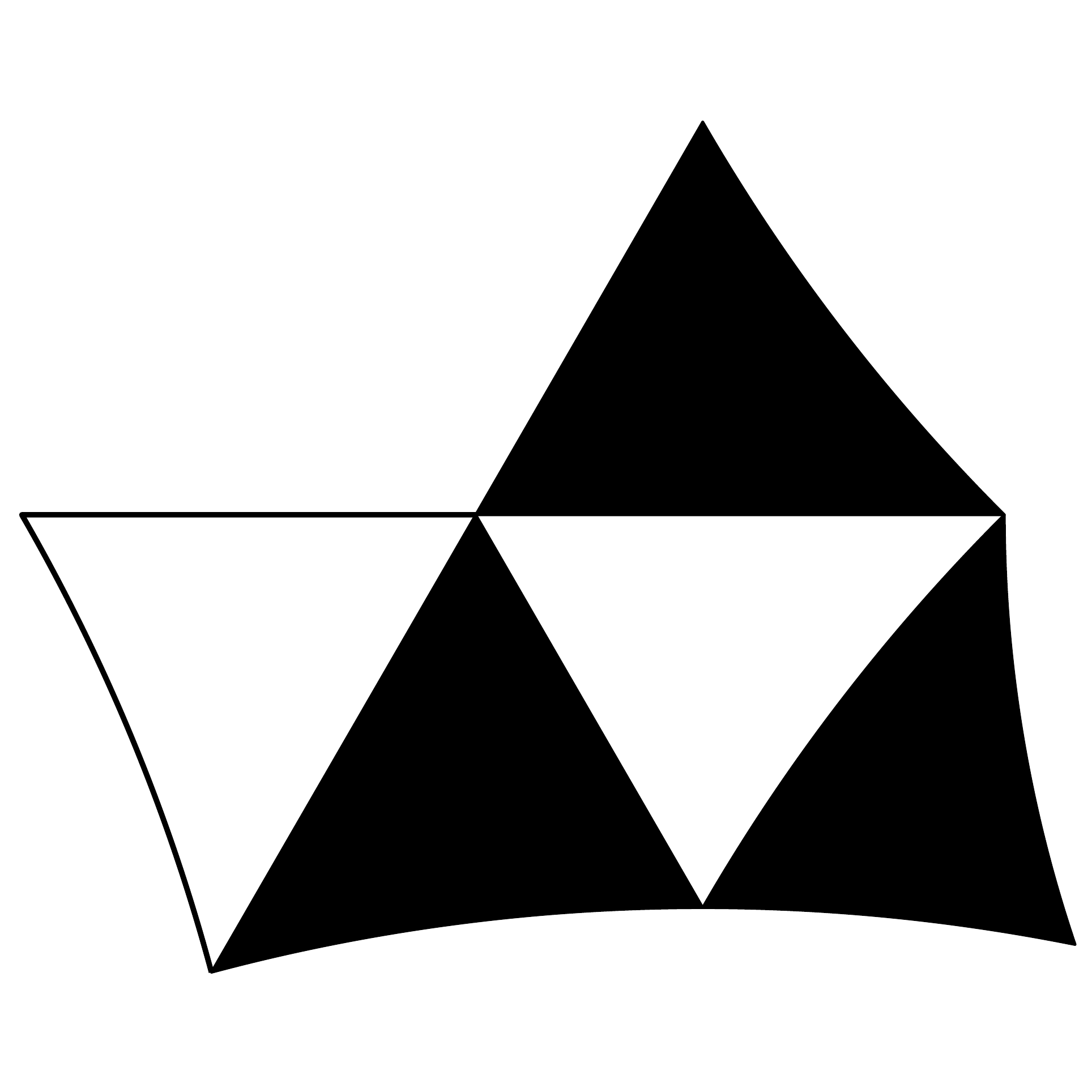}}
\put(-8,274){\refpart{a}}  \put(-8,202){\refpart{b}}
\put(-8,107){\refpart{c}} \put(178,132){\refpart{d}}
\end{picture}  \]
\caption{Coxeter decompositions for the parametric 
Gauss-to-Heun transformations defined over $\RR$}
\label{fg:felixon}
\end{figure}

Similarly, Figure \ref{fg:felixon}\refpart{c} includes all Coxeter 
decompositions for the Gauss-to-Heun transformations of the types $(2,4)$ and $(2,5)$. 
Here we identify the quadrangles (and the corresponding Belyi coverings) for the Coxeter decompositions
\FE{6} to \FE{12}, respectively:
\begin{align*}
\mbox{\it ABCF }(\PH{25}), \quad \mbox{\it ABFH }(\PH{41}),  
\quad \mbox{\it ABDF }(\PH{20}), \quad \mbox{\it BDFH }(\PH{40}),  \\
 \mbox{\it ABML }(\PH{35}), \quad \mbox{\it ABCL }(\PH{29}), \quad \mbox{\it OCEG }(\PH{45}).
\end{align*}
The quadrangles (and coverings) for the Coxeter decompositions
\FE{14} and \FE{15} are 
\mbox{\it KCFH }(\PH{43}) and \mbox{\it OCEH }(\PH{24}), respectively.

Finally, Figure \ref{fg:felixon}\refpart{c} includes all Coxeter 
decompositions for the Gauss-to-Heun transformations specifically of the type $(2,3)$.
They are numbered from \FF{5} to \FF{27} by \cite[Figure 10]{Felikson98}. 
Here are their quadrangles (and coverings), respectively:
\begin{align*}
\mbox{\it AOEX }(\PH{30}), \quad \mbox{\it AXEZ }(\PH{28}),  \quad \mbox{\it AXEY }(\PH{27}), 
\quad \mbox{\it AFOY }(\PH{29}), \quad \mbox{\it AFOQ }(\PH{24}),  \\
\mbox{\it AOEP }(\PH{25}), \quad \mbox{\it AXEQ }(\PH{22}),  \quad \mbox{\it AXER }(\PH{23}), 
\quad \mbox{\it AFEO }(\PH{17}), \quad \mbox{\it AFOB }(\PH{15}),  \\
\mbox{\it APER }(\PH{20}), \quad \mbox{\it APEQ }(\PH{19}),  \quad \mbox{\it AOEG }(\PH{16}), 
\quad \mbox{\it AXED }(\PH{13}), \quad \mbox{\it APED }(\PH{9}), \; \\
\mbox{\it AXEB }(\PH{12}), \quad \; \mbox{\it APEB }(\PH{8}),  \; \quad \mbox{\it ACEF }(\PH{5}), 
\; \quad \mbox{\it ABEG }(\PH{2}) \;, \quad \mbox{\it ADEG }(\PH{4}), \; \\
\mbox{\it AXEC }(\PH{14}), \quad \mbox{\it APEC }(\PH{10}), \quad \mbox{\it ACEG }(\PH{3}). \; 
\end{align*}
In total, there are $(27-4)+(15-5)+(4-2)=35$ subdivisions \FF{k}, \FE{k}, \FT{k} 
representing Gauss-to-Heun transformations with exactly one parameter.

The subdivisions for the Gauss-to-Heun transformations with 2 or 3 parameters are
the following: 
\begin{itemize}
\item  The Coxeter decomposition for quadratic transformation (\ref{eq:quadtr2}) 
is represented by a single Schwarz reflection.
It can be discerned in many places in Figure \ref{fg:felixon}, for example as the quadrangle
\emph{OYCZ}  in picture \refpart{d}. 
It appears several times in Felikson's figures, in particular as \FF{1}=\FE{1}=\FT{1}.
\item There are two degree 3 decompositions \FF{2}=\FE{2} and \FT{2}. 
They are both represented by the covering $H_{34}$, as we mentioned discussing picture \refpart{a}.
The other cubic transformation (with the covering $H_{33}$) is not defined over $\RR$ in the normalized
form \cite[\S 4.4.4]{Vidunas2009b} but over $\QQ(\omega)$, hence there is no Coxeter decomposition for it.
\item There are three degree 4 decompositions, \FF3=\FE3, \FE4 and \FF4=\FE5.
They can be discerned, for example, as the following quadrangles in picture \refpart{c}, respectively:
{\it OBCF }(\PH{31}), {\it OABC }(\PH{47}),  {\it OCEF }(\PH{35}).
\end{itemize}

Whether a Gauss-to-Heun transformation is realized by a Coxeter decomposition,
is determined by a close inspection in Step 4 of \S \ref{sec:4}.  A necessary and sufficient condition
is that the Belyi covering has to be defined over $\RR$ after a normalization (by M\"obius transformations)
that locates 3 of the 4 singular points of Heuns equation as $x=0$, $x=1$, $x=\infty$. 
In particular, the fourth singular point $x=t$ has to be real, though this is not a sufficient condition.
For example, a proper normalization of $H_{48}$ for the type $(4,4)$ transformation
is $8ix(x^2-1)/(x+i)^4$. This gives $t=-1$, but the covering is not defined over $\RR$. 
There is one other example of this type: a proper normalization of $H_{28}$ for a type $(3,3)$ 
pull-back is $3(1+2\omega)x^2(x^2-1)/(x^2+\omega)^3$.
On the other hand,  a proper normalization of the same 
$H_{28}$ for a type $(2,3)$ pull-back is
defined over $\QQ(\sqrt{3})$, giving the Coxeter decomposition $F_6$. 
There are two different Coxeter decompositions for each of the following coverings:
\PH{20}, \PH{24}, \PH{25}, \PH{29}, \PH{34}, \PH{35}, \PH{47}. 
Comparison of our classification and Felikson's list \cite{Felikson98} provides 
a useful mutual confirmation.

The considered 
Coxeter decompositions are \emph{parametrized}, in~that one or more of the
triangular vertex angles are free to vary. For somewhat larger real values of the free parameter(s), 
the Coxeter decompositions 
are transfigured to spherical geometry  of the Riemann sphere 
(if angles larger than $\pi$ are allowed), as subdivisions of spherical 
quadrangles into  spherical triangles with the angles satisfying $\alpha\pi+\beta\pi+\gamma\pi>\pi$.
Most of the Coxeter decompositions can be transfigured to the plain Euclidean geometry 
(where $\alpha\pi+\beta\pi+\gamma\pi=\pi$) as~well. The exceptions are
\FF{14}, \FF{16}, \FF{20}, \FF{27}, \FE{9}, \FT{4}, for which the quadrangles degenerates to flat triangles.

Broughton et al.~\cite{Broughton2000} classify similar geometric objects:
\emph{divisible tilings} of the hyperbolic plane.  Compared with Felikson's
pictures, divisible tilings form a proper subset of Coxeter decompositions.
The condition for a Coxeter decomposition to be a divisible tiling is that
the quadrangle angles be equal to~$\pi/k$, with $k$ an integer.  In general
Coxeter decompositions, \emph{rational} multiples of~$\pi$ are also
allowed.  The one-parameter divisible tilings relevant here are depicted in
\cite[Table~6.6]{Broughton2000}.  There are 34 of them; the first~6
correspond to Gauss-to-Heun transformations with 2 or~3 parameters.  Divisible
tilings are indicated in Tables \ref{tab:clas}--\ref{tab:clas2} by the
notation $F_{7}^*,\dots,F_{34}^*$, where the subscripts refer to the
numbering in \cite[Table~6.6]{Broughton2000}.  There are $35-(34-6)=7$
relevant Coxeter decompositions with one parameter that are not divisible tilings;
they all have the angle $2\pi/3$. 

\subsection{Composite transformations}
\label{subsec:composite}

The composite Gauss-to-Heun transformations can be inductively deduced 
from a smaller set of pull-back transformations among hypergeometric and Heun functions.
Due to the associativity of the composition operation, one can always decompose a
Gauss-to-Heun transformation as a product of the following:
\begin{itemize}
\item A possibly composite Gauss-to-Gauss transformation 
  with a free parameter, excluding M\"obius transformations
  and from $\hpgde{1,\alpha,\alpha}$ or $\hpgde{1/2,1/2,\alpha}$ for the purposes of this article.  
  This could be  the quadratic transformation (\ref{eq:quadtr1}) and one of 6 classical transformations 
  (of degrees~$3$, $4$ and~$6$)  worked~out  by Goursat~\cite{Goursat1881}
  and listed in \cite[Table~1]{Vidunas2009}.
\item An \emph{indecomposable} Gauss-to-Heun transformation with at~least one
  free parameter.  This could be the quadratic transformation (\ref{eq:quadtr2});
  one of 4 other indecomposable transformations 
  of Table~\ref{tab:clas}; or an indecomposable transformation of
  Table \ref{tab:clas1} or~\ref{tab:clas2} of degree at most 6, possibly fitting a
  Gauss-to-Gauss or a Heun-to-Heun transformation. 
\item A Heun-to-Heun transformation with at~least one free parameter.  
  This could be the quadratic transformation~(\ref{eq:quadtr3}), 
  or the degree $4$ composite transformation 
\begin{align}
\label{eq:quadtr4}
\heunde{\fr12,\fr12,\fr12,\,\alpha}\pback{\HT2}
\heunde{\fr12,\fr12,\,\alpha,\,\alpha}\pback{\HT2}
\heunde{\alpha,\,\alpha,\,\alpha,\,\alpha},
\end{align}
realized by the covering \PH{31}. See \cite[\S 4.3]{Vidunas2009b} for an overview.
\end{itemize}

\begin{landscape}
{\small
\begin{figure}
\renewcommand{\arraystretch}{1.0}
\begin{center}
\begin{tabular}{cccccccccccccrccccccccccccl}
\cline{2-14}  \cline{20-26} & \multicolumn{13}{||c||}
{$\PHfortythree:\hpgde{\frac12,\frac13,\,p}\pback{3}\hpgde{\frac12,p,2p}\pback{2}\hpgde{p,p,4p}$}
&&&&&&
\multicolumn{3}{||r}{$\stackrel{}{\displaystyle\swarrow^{\hspace{-14pt}3}\hspace{10pt}}$} &
$\!\hpgde{\frac12,p,2p}\!$ & \multicolumn{3}{l||}{$\;\stackrel{}{\!\!\displaystyle\nwarrow^{\!2}}$} \\ 
\cline{2-16}  \multicolumn{2}{c|}{} && \multicolumn{13}{||c||}
{$\PH{35}:\hpgde{\frac12,\frac14,\,p}\pback{2}\hpgde{\frac12,p,p}\pback{2}\hpgde{p,p,2p}$}
&&&&  \multicolumn{2}{||l}{$\hpgde{\frac12,\,\frac13,\,p}$}
&&&& \multicolumn{2}{r||}{$\hpgde{2p,2p,2p}$} \\ 
\cline{4-17}  && \multicolumn{5}{|c|}{} &&&
\multicolumn{8}{||c||}{$\PH{33}:\hpgde{\frac13,\,\frac13,\,p}\pback{\CT3}\hpgde{p,\,p,\,p}$}
&&& \multicolumn{3}{||r}{$\stackrel{\displaystyle\nwarrow}{}^{\!2}$} & $\!\hpgde{\frac13,\frac13,2p}\!$
& \multicolumn{3}{l||}{$\;\stackrel{\displaystyle\swarrow^{\hspace{-17pt}\CT3}}{}$} \\ 
\cline{10-17}  \cline{20-26} \multicolumn{2}{r|}{\PH{2}} & \multicolumn{5}{r|}{\PHforty}
&&&&&&& \multicolumn{8}{|c|}{} \\ 
\multicolumn{2}{r|}{\PH{5}} & \multicolumn{5}{r|}{\PHfortyone} &
\multicolumn{6}{r|}{\PH{28}} & \multicolumn{8}{r|}{\PH{5}} \\  
&& \multicolumn{5}{|c|}{} &&&&&&& \multicolumn{8}{|c|}{} \\  
\cline{2-22} & \multicolumn{21}{|c|}{$\PH{32}:\hpgde{\alpha,\beta,\gamma}\pback{2}\heunde{\alpha,\alpha,2\beta,2\gamma}$}
& \multicolumn{5}{l}{$\alpha=\frac12\Rightarrow\PH{31}$} \\ 
\cline{2-22} \multicolumn{5}{c|}{} &&&&&&&&&& \multicolumn{5}{|c|}{}
&&&& \multicolumn{5}{l}{$\beta=\gamma=\frac14\Rightarrow\PHfortyeight$} \\ 
\multicolumn{5}{r|}{\PH{15}} & \multicolumn{9}{r}{\PH{35}\!}
& \multicolumn{5}{|r|}{\PH{27}\!} & \multicolumn{3}{l}{$\!\Rightarrow\PH{5}$}
& \multicolumn{5}{l}{$\alpha=\frac12,\beta=\frac14\Rightarrow\PH{31},\PH{35}\Rightarrow\PHfortyone$} \\ 
\multicolumn{5}{r|}{\PH{17}} & \multicolumn{9}{r}{\PH{31}\!}
& \multicolumn{5}{|l|}{$\!p=\frac14\Rightarrow\PHfortyone$} \\ 
\multicolumn{5}{r|}{\PH{19}} & \multicolumn{9}{r}{\PH{31}\!}
& \multicolumn{5}{|l|}{$\!q=\frac14\Rightarrow\PHfortyone$} \\ 
\multicolumn{5}{c|}{} &&&&&&&&&& \multicolumn{5}{|c|}{} \\ 
\cline{4-13} \cline{19-25} &&&
\multicolumn{10}{||c||}{$\PHfortyseven:\hpgde{\frac12,\,\frac13,\,p}\pback{4}\hpgde{\frac13,\,p,\,3p}$}
&& \multicolumn{4}{|c}{} &
\multicolumn{7}{||c||}{$\PH{34}:\hpgde{\frac12,\,\frac13,\,p}\pback{3}\hpgde{\frac12,\,p,\,2p}$} \\ 
\cline{4-13} \cline{19-25} &&&&&&
\multicolumn{6}{|r|}{} &&& \multicolumn{6}{|r|}{} & \multicolumn{4}{r|}{}\\ 
\multicolumn{6}{r|}{\PH{3}} & \multicolumn{6}{r|}{\PH{1}\!} &&& \multicolumn{6}{|r|}{\PH{3}}
& \multicolumn{4}{r|}{\PH{12}\!}\\ 
&&&&&& \multicolumn{6}{|r|}{\PH{6}\!} &&& \multicolumn{6}{|r|}{}
& \multicolumn{4}{r|}{\PH{14}\!}\\ 
\cline{2-11} \cline{17-23} &
\multicolumn{10}{|c|}{$\PH{34}:\hpgde{\frac13,\alpha,\beta}\pback{3}\heunde{\alpha,2\alpha,\beta,2\beta}$}
&& \multicolumn{2}{|c|}{} &&&
\multicolumn{7}{|c|}{$\PHfortyseven:\hpgde{\frac12,\alpha,\beta}\pback{4}\heunde{\alpha,3\alpha,\beta,3\beta}$}
& \multicolumn{1}{c|}{} \\ 
\cline{2-13} \cline{17-25}  \multicolumn{2}{c|}{} &&
\multicolumn{10}{|c|}{$\PH{33}:\hpgde{\frac13,\alpha,\beta}\pback{\CT3}\heunde{\alpha,\alpha,\alpha,3\beta}$}
&& \multicolumn{3}{|c|}{} &&
\multicolumn{7}{|c|}{$\PH{34}:\hpgde{\frac12,\alpha,\beta}\pback{3}\heunde{\frac12,\alpha,2\alpha,3\beta}$}
& \multicolumn{2}{l}{$\alpha=\frac14\Rightarrow\PHfortythree$} \\  
\cline{4-13} \cline{19-25} && \multicolumn{6}{|c|}{} &
\multicolumn{5}{r}{$\quad\alpha=\frac12\Rightarrow\PH{28}\Rightarrow\PH{6}\hspace{-5pt}$} & &
\multicolumn{3}{|c|}{} &&&&& \multicolumn{4}{|c}{} &
\multicolumn{2}{l}{$\beta=\frac16\Rightarrow\PHthirtynine$} \\ 
\multicolumn{2}{r|}{$p=\frac13:\PH{25}$} &
\multicolumn{6}{r|}{$p=\frac13:\PH{38}$} &&&&&&&
\multicolumn{3}{|r|}{$\;q=\frac14:\PH{20}\!$} &
\multicolumn{4}{r|}{$q=\frac14:\PH{25}\!$} \\ 
\multicolumn{2}{r|}{$q=\frac16:\PHthirtynine$} & \multicolumn{6}{r|}{$q=\frac16:\PH{38}$} &
\multicolumn{6}{c|}{} &&&& \multicolumn{4}{|r|}{} \\ 
&& \multicolumn{6}{|c|}{} & \multicolumn{6}{c|}{} &&&& \multicolumn{4}{|c|}{} \\ 
\cline{2-22} & \multicolumn{21}{||c||}{$\PH{32}:\hpgde{\frac12,\,p,\,q}\pback{2}\hpgde{p,\,p,\,2q}$} \\ 
\cline{2-22} &&&&&&& \multicolumn{8}{|c|}{} \\ 
\multicolumn{7}{r|}{$p=\frac13:\PH{19}$} &
\multicolumn{8}{r|}{$p=\frac13:\PH{20}$} \\  
\cline{21-26} &&&&&&& \multicolumn{8}{|c|}{} &&&&&&
\multicolumn{6}{|c|}{$\PH{26}:\hpgde{\frac12,\frac13,\alpha}\pback{6}
\heunde{\frac12,\frac12,\alpha,5\alpha}$} & $\Rightarrow\PH{4}$ \\ 
\cline{4-13} \cline{21-26} &&&
\multicolumn{10}{|c|}{$\PHfortysix:\hpgde{\frac13,\frac13,\alpha}\pback{\BT4}\heunde{\frac13,\frac13,\alpha,3\alpha}$}
& \multicolumn{2}{c|}{} &&&&&&
\multicolumn{6}{|c|}{$\PH{36}:\hpgde{\frac12,\frac13,\alpha}\pback{\AT4}
\heunde{\frac12,\frac12,\frac13,4\alpha}$} & $\Rightarrow\PH{20}$ \\ 
\cline{4-16} \cline{21-26} &&&&&
\multicolumn{11}{|c|}{$\PHfortyseven:\hpgde{\frac13,\frac13,\alpha}\pback{4}\heunde{\frac13,\frac13,2\alpha,2\alpha}$}
&&&&& \multicolumn{6}{|c|}{$\PH{36}:\hpgde{\frac12,\frac14,\alpha}\pback{\AT4}
\heunde{\frac12,\frac12,\alpha,3\alpha}$} & $\Rightarrow\PH{20}$ \\ 
\cline{6-16} \cline{21-26}
\end{tabular}
\caption{Compositions of pull-back transformations between
hypergeometric and Heun equations.}
\label{fig:comps} 
\end{center}
\end{figure}
}
\end{landscape}

Figure~\ref{fig:comps} graphically depicts all possible compositions of the
preceding three types (Gauss-to-Gauss, etc.)
The two longest boxes, centrally placed, represent quadratic transformations (based
on the double covering~$\PH{32}$).  The following objects and information are
included in the figure.


There are $7$ boxes with double edges on the left and the right
  sides, representing the classical Gauss-to-Gauss transformations.
  The quadratic transformation 
  appears as the long box in the lower part; two indecomposable
  transformations (of degree $3$ or~$4$) appear as boxes in the central
  part; and the remaining four classical transformations (of degrees $3$,
  $4$ and~$6$) are represented in the upper part.  Of~the latter, only the
  cubic transformation is indecomposable.  The transformation appearing
  near the upper-right corner can be decomposed in two different ways; its
  covering does not occur in Tables \ref{tab:clas}--\ref{tab:clas2}, hence
  it is not identified by an $H$~number. These $7$~boxes will be called
  $E\to E$ boxes.

The $10$ other boxes represent indecomposable Gauss-to-Heun
  transformations.  The quadratic transformation (\ref{eq:quadtr1}) is represented
  by the long box in the upper part; the four indecomposable
  transformations of Table~\ref{tab:clas} appear in the central part.  The
  three isolated boxes near the lower right corner represent the
  indecomposable transformations of Table~\ref{tab:clas2}, to each of which
  the quadratic Heun-to-Heun transformation~(\ref{eq:quadtr3}) can be applied.
  The other two lowest boxes represent transformations in
  Table~\ref{tab:clas2} that can be composed with a specialization of the
  quadratic $E\to E$ transformation.  These $10$~boxes will be called $E\to
  \textit{HE}$ boxes.

The vertical lines connect $E\to E$ and $E\to \textit{HE}$ boxes whose
  transformations can be composed (perhaps after a specialization of
  parameters).  The composed coverings are labeled by $H$~numbers on the
  left side of each vertical line.  Relevant specializations of the
  quadratic $E\to E$ transformation are given as~well.  Note that the
  specializations $p=\frac12$ and $q=\frac12$ of the quadratic $E\to E$
  transformation are not given, because (as~stated above) the dihedral 
  family is not considered here.  The number of possible compositions
  between an $E\to E$ box and an $E\to \textit{HE}$ box depends on the number of
  ways to identify (without degeneracy) the exponent differences of the
  intermediate hypergeometirc equation.  It is instructive to compare compositions of the
  quadratic $E\to \textit{HE}$ transformation with the two hypergeometric
  transformations coming from the coverings $\PHfortyseven$ and~$\PH{34}$.
  Compositions of the quadratic $E\to E$ and $E\to \textit{HE}$ transformations
  occur as the composite quartic coverings $\PH{35}$, $\PH{31}$ in
  Table~\ref{tab:clas}.

The $\Rightarrow$ symbols outside the boxes indicate application of
  the quadratic Heun-to-Heun transformation~(\ref{eq:quadtr3}). If this 
  transformation can be applied after an indecomposable Gauss-to-Heun
  transformation, the relevant parameter specializations and composite
  coverings are indicated to the right (or near the lower right corner) of
  the respective box.  If (\ref{eq:quadtr3}) can be applied after a
  composite Gauss-to-Heun transformation, this is indicated by the
  $\Rightarrow$~symbol to the right of the $H$~number of the composite
  covering (and to the right of the respective vertical line).

Some boxes of the same kind touch each other, but that does not have a
particular meaning.  The box for the quadratic $E\to \textit{HE}$ transformation
(\ref{eq:quadtr2}) is connected to all $E\to E$ boxes, since this 
transformation can always be applied without restrictions on the
exponent differences.  The box for the quadratic $E\to E$ transformation
(\ref{eq:quadtr1}) is connected to all $E\to \textit{HE}$ boxes, except for
the isolated three.

To show completeness of Figure~\ref{fig:comps}, one must:
\begin{itemize}
\item Consider all transformations of Tables
  \ref{tab:clas1},~\ref{tab:clas2} to which the quadratic Heun-to-Heun
  transformation~(\ref{eq:quadtr3}) can be applied; and after computing and
  examining the resulting compositions, keep only indecomposable
  transformations.
\item Check the classical $E\to E$ transformations of
  \cite[Table~1]{Vidunas2009} and the quadratic $E\to E$ transformations,
  to determine whether the pulled-back hypergeometric equation can ever
  have exponent differences of the form $1/k$,~$1/\ell$, consistent
  with~(\ref{eq:ledt}); if~so, composition with a one-parameter $E\to
  \textit{HE}$ transformation of Table~\ref{tab:clas1} or~\ref{tab:clas2}
  may be possible.
\item If a pair of $E\to E$ and $E\to \textit{HE}$ boxes is not connected
  by a vertical line, check that the respective transformations cannot be
  composed.
\item Check completeness of coverings for each vertical line.
\item Check possible compositions with the Heun-to-Heun transformations of
degrees $2$ and~$4$. 
\end{itemize}

The information of Figure~\ref{fig:comps} is given in the rightmost
columns of Tables~\ref{tab:clas}, \ref{tab:clas1}, \ref{tab:clas2}.
The compositions are spelled out more explicitly in \cite[Appendix B]{Vidunas2009b}.  
A~multiple occurrence of a covering in
Figure~\ref{fig:comps} means either that it can be decomposed in more than
one way (as~for \PH{3}, \PH{5}, \PH{6}, \PH{19}, \PH{31},
\PHthirtynine, \PHfortyone); or that it appears in more than one
composition (as~for \PH{25}, \PH{28}, \PH{35},~\PH{38}); or both (as
for~$\PH{20}$).  

The following cases are worth attention.  Firstly, there
are three ways to decompose the quartic covering~\PH{31} in
Table~\ref{tab:clas}, on account of its special branching pattern
\branch{2+2}{2+2}{2+2}:
\begin{equation}
\PH{31}\colon
\left\{
\begin{aligned}
\label{eq:compDD} 
& \hpgde{\fr12,\,\alpha,\,\beta}\pback{2}\hpgde{\alpha,\,\alpha,\,2\beta}
\pback{2}\heunde{2\alpha,\,2\alpha,\,2\beta,\,2\beta}, \\
& \hpgde{\fr12,\,\alpha,\,\beta}\pback{2}\hpgde{2\alpha,\,\beta,\,\beta}
\pback{2}\heunde{2\alpha,\,2\alpha,\,2\beta,\,2\beta}, \\
& \hpgde{\fr12,\,\alpha,\,\beta}\pback{2}\heunde{1/2,1/2,\,2\alpha,\,2\beta}
\pback{\HT2}\heunde{2\alpha,\,2\alpha,\,2\beta,\,2\beta}.
\end{aligned}
\right.
\end{equation}
This is indicated by the $2\times2$ in the rightmost column.  The covering
\PH{31} occurs as a part of the larger compositions \PH{5} and \PHfortyone;
see their composition lattices in \cite[(B.5), (B.4)]{Vidunas2009b}.
Besides, the covering \PH{31} induces the degree 4 Heun-to-Heun transformation~(\ref{eq:quadtr4}).

The transformation
$\hpgde{\fr12,\fr14,\,\alpha}\pback{4}\heunde{\fr12,\fr12,\,2\alpha,\,2\alpha}$
is induced by two distinct coverings: \PH{31} and~\PH{35}.  Induced
by~\PH{31}, this transformation is the $\beta=1/4$ specialization
of~(\ref{eq:compDD}); induced by~\PH{35}, this transformation is a new one
suggested by the branching pattern given in Table~\ref{tab:clas2}.
Both transformations have the factorization
\begin{equation}
\hpgde{\fr12,\,\fr14,\,\alpha}\pback{2}\heunde{1/2,1/2,1/2,\,2\alpha}
\pback{\HT2}\heunde{1/2,1/2,\,2\alpha,\,2\alpha},
\end{equation}
but they have different sets of $t$ parameters.
Both \PH{31} and \PH{35} appear as parts of the degree 8
composite transformation \PHfortyone.

\section{Existence and uniqueness of coverings}
\label{sec:6}

This section presents an elegant way to conclude that there are no Belyi coverings
with some branching patterns. The idea is to deduce a pull-back transformation
of Fuchsian equations that is not possible, because it would relate an equation
with finite monodromy to an equation with infinite monodromy group,
or the pulled-back equation would not exist. We apply this idea to all cases 
of non-existent coverings of Tables~\ref{tab:clas}, \ref{tab:clas1}, \ref{tab:clas2}.
Moreover, in \S \ref{subsec:BH} this approach is applied to most cases of non-existent 
coverings in the Miranda--Persson list \cite{Miranda89} 
of K3 elliptic surfaces.

As an immediate example, consider the non-existent covering of degree 4 in Table ~\ref{tab:clas}.
If it would exist, the specialization $\alpha=1/2$ would give a pull-back 
from  $\hpgde{1/2,1/2,\beta}$ to a Fuchsian equation with two singularities and (generally) non-equal
exponent differences  $\beta,3\beta$ at them, contradicting part \refpart{ii} of 
Lemma \ref{lem:logpoint} below.  Or one can further specialize $\beta=1$ or $\beta=1/3$ 
and get a contradiction  with part \refpart{i} of the same lemma. 
In \S \ref{subsec:principallemmas} we prove several assertions 
from which we make non-existence conclusions. 
Table \ref{tab:nonexist} outlines the non-existence proofs.
In~\S\,\ref{subsec:uniqueness}, we seek to show uniqueness (up to M\"obius transformations)
of the Belyi coverings with the encountered branching patterns,
by considering implied pull-backs between Fuchsian equations
with finite monodromy groups.

\subsection{Principal lemmas}
\label{subsec:principallemmas}

The easiest way to conclude non-existence of a Belyi covering with a certain branching
pattern is to deduce a pull-back transformation to a non-existent Fuchsian equation. 
Here are two basic situations.
\begin{lemma}
\label{lem:logpoint}
\begin{enumerate}
\item There is no Fuchsian equation on\/ $\PP^1$ that has exactly\/ one
  relevant singular point.
\item If a Fuchsian equation on\/ $\PP^1$ has exactly\/ $2$ singular points,
  their exponent differences are equal.
\end{enumerate}
\end{lemma}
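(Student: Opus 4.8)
The plan is to prove both statements by sending the singular points to $\{0,\infty\}$ with a M\"obius transformation and then classifying the very few Fuchsian equations that can result, using that a Fuchsian equation on $\PP^1$ has rational-function coefficients whose behaviour at $\infty$ is governed by the regular-singular-point conditions. Throughout I will use two facts. First, a prefactor-only transformation $Y=\theta(x)\,y$ leaves every exponent difference unchanged (it is the $k=1$ case of Lemma~\ref{lem:genrami}\refpart{a}), so whether a point is relevant, irrelevant or ordinary is a gauge invariant. Second, taking $\theta(x)=\prod_i (x-x_i)^{r_i}$ shifts the exponents at each $x_i$ by $r_i$, shifts the exponents at $\infty$ by $-\sum_i r_i$, and touches no other point.

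For statement~\refpart{b}, I would first send the two singular points to $0$ and $\infty$. The coefficients of $y'$ and $y$ are then rational with poles confined to $\{0,\infty\}$; the regular-singular conditions at $0$ (a pole of order at most $1$, resp.\ $2$) together with the decay conditions at $\infty$ ($O(1/x)$, resp.\ $O(1/x^2)$) force the equation into the Euler form $x^2y''+A\,xy'+B\,y=0$. Its indicial equation at $0$ is $\rho^2+(A-1)\rho+B=0$, and the substitution $x=1/t$ turns the equation into another Euler equation whose exponents at $\infty$ are precisely the negatives of those at $0$. Hence the two exponent differences coincide up to sign, which is exactly the assertion.

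For statement~\refpart{a}, suppose an equation had exactly one relevant singular point, and move it to $\infty$. Every remaining singular point is then finite and irrelevant, hence non-logarithmic with exponents $\alpha_i,\alpha_i+1$; choosing $\theta(x)=\prod_i (x-x_i)^{-\alpha_i}$ converts each into an ordinary point while only shifting the exponents at the relevant point $\infty$, where the shift is harmless since the exponent difference is preserved. The gauged equation now has rational coefficients with no finite poles, so these coefficients are polynomials; the decay conditions at $\infty$ then force both to vanish, leaving $y''=0$. But $y''=0$ has solutions $1,x$, so its exponents at $\infty$ are $0,-1$, making $\infty$ irrelevant for the gauged equation. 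Since the prefactor preserves relevance, this contradicts the assumption that $\infty$ was relevant, proving~\refpart{a}.

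The one genuinely delicate point is the bookkeeping around irrelevant singularities in~\refpart{a}: one must move the relevant point to $\infty$ \emph{before} clearing the finite irrelevant points, so that the exponent shift the prefactor necessarily produces at $\infty$ lands on the relevant point (harmless) rather than converting an ordinary point at $\infty$ into a spurious irrelevant singularity. Once that ordering is fixed, the whole argument rests on the elementary classification of Fuchsian equations with singularities inside $\{0,\infty\}$ (Euler type for two points, and $y''=0$ for one), so no serious computation remains; I would only need to state the regular-singular conditions at $\infty$ precisely enough to justify the vanishing of the polynomial coefficients.
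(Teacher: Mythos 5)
Your proof is correct and follows essentially the same route as the paper's: move the singular locus into $\{0,\infty\}$, clear irrelevant singularities by a prefactor, and read off the conclusion from the forced form of the equation ($y''=0$ for one point, the Euler equation for two). The only cosmetic differences are that the paper passes to the Liouville normal form $x^2y''=cy$ in~\refpart{b} where you keep the general Euler form, and in~\refpart{a} it phrases the endgame as a dichotomy (irrelevant vs.\ irregular) where you invoke the Fuchsian decay condition at $\infty$ directly.
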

\begin{proof}
If a Fuchsian equation has just one relevant singularity, we can move it to infinity
and make all points in $\CC$ ordinary. The differential equation then has the form
$y''+Py'+Qy=0$, where $P,Q$ are polynomials (in the differentiation variable $x$).
If $P=Q=0$, then the local exponents at the infinity are $0,-1$, thus
$x=\infty$ will be an irrelevant singularity. Otherwise $x=\infty$ is an irregular singularity, 
and the equation will not be Fuchsian.

If a Fuchsian equation has 2 singularities, we can assume them to be $x=0$, $x=\infty$.
The Liouville normal form of the equation is then $x^2y''=cy$ with $c\in\CC$.
The exponent differences of this equation equal $\sqrt{1+4c}$ at both singular points.
\end{proof}

Another type of non-existent transformation is a pull-back of
a hypergeometric equation with finite monodromy to a hypergeometric equation
with infinite monodromy.  (A~Fuchsian equation has finite monodromy if and only if its
solution space has a basis consisting of algebraic functions.)  The
following lemma characterizes some hypergeometric equations
with finite (or infinite) monodromy groups.

\begin{lemma}
\label{lem:finitem}
Consider a hypergeometric equation \/ $E=\hpgde{\alpha,\beta,\gamma}$ on\/ $\PP^1$.
\begin{enumerate}
\item Suppose that\/ $\alpha,\beta,\gamma$ are rational numbers, each
  having denominator\/ $3$.  Then the monodromy of\/ $E$ will be finite
  if and only if the sum of the numerators of\/ $\alpha,\beta,\gamma$ is even.
\item If\/ $\alpha$ is a half-odd-integer, and\/ $\beta,\gamma$ are
  rational numbers, each having denominator\/ $4$, then the monodromy 
  of\/ $E$ is not finite.
\item Suppose that\/ $\alpha,\beta,\gamma$ are integers.  Then the
  monodromy of\/ $E$ will be trivial if and only if the sum
  $\alpha+\beta+\gamma$ is odd, and the triangle inequalities
  $\gamma<\alpha+\beta$, $\beta<\alpha+\gamma$, $\alpha<\beta+\gamma$ are
  satisfied; otherwise the monodromy is not finite.
\item Suppose that\/ $\alpha$ is an integer while\/ $\beta,\gamma$ are
  half-odd-integers.  The set\/
  $\{\left|\beta-\gamma\right|,\allowbreak\beta+\gamma\}$ contains two
  integers of different parity; let\/ $k$ be the integer in this set such
  that\/ $k+\alpha$ is odd.  Then the monodromy group of\/ $E$ will be
  isomorphic to\/ $\ZZ/2\ZZ$ if and only if\/ $k<\alpha$; otherwise the
  monodromy will not be finite.
\end{enumerate}
\end{lemma}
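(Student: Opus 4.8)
The plan is to reduce every part to the classical description of the projective monodromy of a hypergeometric equation. Recall (as noted in the Remark following Proposition~\ref{prop:nonbelyi} and revisited in \S\ref{sec:6}) that a hypergeometric equation has finite monodromy precisely when its solutions are algebraic, i.e.\ when the projective monodromy group is one of the finite subgroups of $\mathrm{PSU}(2)$: cyclic, dihedral, $A_4$, $S_4$ or $A_5$. By rigidity of the hypergeometric equation, this projective group is determined up to conjugacy by the three local monodromy classes, and hence by the exponent-difference triple $(\alpha,\beta,\gamma)$ taken modulo the operations of permutation, sign change $\lambda\mapsto-\lambda$, and integer translations whose total is \emph{even}. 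The group is finite exactly when this equivalence class meets Schwarz's table of fifteen spherical-triangle triples \cite{Erdelyi53,Poole36}. The local monodromy at a point with exponent difference $\lambda$ is a rotation whose projective order equals the denominator of $\lambda$; in the non-logarithmic case it is semisimple, whereas an integer $\lambda$ may produce a logarithmic (unipotent, hence infinite-order) generator. The whole argument is the bookkeeping of this dictionary, and the one place it needs genuine care is when an exponent difference is an integer, because there the clean ``reduce and compare'' step can fail.

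First I would dispose of part \refpart{b}, which needs none of this subtlety. Under sign changes each denominator-$4$ difference reduces to $\pm1/4$ and the half-odd-integer difference to $1/2$, so the reduced triple has denominators exactly $(2,4,4)$. Inspecting Schwarz's list, no finite triple has these denominators: the dihedral triples carry two $2$'s, and every tetrahedral, octahedral or icosahedral entry contains a denominator $3$ or $5$ (the two octahedral triples being $(1/2,1/3,1/4)$ and $(2/3,1/4,1/4)$). Since a denominator-$2$ element cannot acquire denominator $3$ or $5$ under the allowed operations, the class meets no list entry and the monodromy is infinite. Part \refpart{a} is the same comparison pushed to the Euclidean boundary: the reduced denominators are $(3,3,3)$, which are compatible with two Schwarz situations, the \emph{finite} tetrahedral triple $(2/3,1/3,1/3)$ and the \emph{infinite} Euclidean triple $(1/3,1/3,1/3)$. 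These lie in different equivalence classes, and the invariant separating them is exactly the parity of the numerator sum, which is preserved by sign changes, permutations, and even-total translations. The Euclidean class is the odd one and the tetrahedral class the even one, so $\hpgde{\alpha,\beta,\gamma}$ has finite monodromy iff the numerator sum is even. The point to check carefully is that, among denominator-$3$ triples, there are no further classes, i.e.\ every even-sum triple is equivalent to $(2/3,1/3,1/3)$ and every odd-sum one to $(1/3,1/3,1/3)$.

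Parts \refpart{c} and \refpart{d} are the degenerate cases and carry the real work. For \refpart{c}, the normalization of equation~(\ref{eq:GHE}), which fixes the exponent $0$ at $z=0$ and $z=1$, forces all six local exponents to be integers once the three differences are; every local monodromy is then either $I$ or a nontrivial unipotent. Consequently the monodromy is trivial when the equation is non-logarithmic at all three points, and otherwise contains a unipotent of infinite order, so there is no finite-but-nontrivial case. It remains to identify the non-logarithmic locus, and this is the classical criterion for $\hpgde{\alpha,\beta,\gamma}$ to possess a basis of rational (polynomial-type) solutions: the triangle inequalities guarantee a polynomial solution of the correct degree, while the parity of $\alpha+\beta+\gamma$ fixes that this degree is a nonnegative integer of the right residue. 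For part \refpart{d}, the two half-odd-integer differences give two half-turns which, together with the integer-difference point, generate a projective dihedral group; this is finite, and equal to $\ZZ/2\ZZ$, exactly when the two half-turns share an axis and the integer point is non-logarithmic. Translating ``non-logarithmic'' through the same polynomial-solution analysis as in \refpart{c} produces the numeric test: among $\{\lvert\beta-\gamma\rvert,\beta+\gamma\}$ one selects the integer $k$ with $k+\alpha$ odd (the parity condition), and the axes coincide with a removable singularity precisely when $k<\alpha$, while failing this yields an infinite dihedral group or a unipotent generator. The principal obstacle throughout is thus the logarithmic analysis of the integer points in \refpart{c} and \refpart{d}: one cannot read the answer off Schwarz's table and must instead invoke the explicit apparent-singularity condition and track the $\mathrm{SL}_2$ sign, namely the numerator parity, that a naive reduction modulo $1$ would discard.
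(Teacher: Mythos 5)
Your proposal is correct and follows essentially the same route as the paper: parts \refpart{a} and \refpart{b} by reducing the triple modulo permutation, sign change and contiguity and comparing with the Schwarz list (the paper likewise isolates tetrahedral type III and the two octahedral types IV, V), and parts \refpart{c} and \refpart{d} by analysing whether the integer-difference singularities are apparent or logarithmic. The only difference is one of packaging: where you re-derive the non-logarithmic criteria from unipotent local monodromies and the classical polynomial-solution condition (and, for \refpart{d}, from the two half-turns generating a projective dihedral group), the paper instead cites the explicit degenerate solution $\hpgoppa21{-n,\,\ell+1}{-n-m}{z}$ of \cite{Vidunas2007} for \refpart{c} and quotes \cite[Theorem~5.1]{Vidunas2008a} verbatim for \refpart{d}.
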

\begin{proof}
We use the Schwarz classification of hypergeometric equations 
with finite monodromy for the first two statements; see \cite{Schwarz1872} or \cite[\S\,2.7.2]{Erdelyi53}.
The only possible projective monodromy in statement ~\refpart{a} is the
tetrahedral group~$\mathrak{A}_4$.  There are two Schwarz types (II and III) 
for this group: the hypergeometric equation must be contiguous either
to $\hpgde{1/2,1/3,1/3}$ or to $\hpgde{1/3,1/3,2/3}$. 
We must have the latter Schwarz type III. Contiguous relations shift the
exponent differences by integers whose sum is even. That does not change
the parity of the numerator sum (of the three integers divided by 3),
even if an exponent difference is multiplied by $-1$.

We do not find the denominator pattern of the statement \refpart{b} in the Schwarz list.
In particular, the two Schwarz types  (IV and V) for the octahedral group $\mathrak{S}_4$
are contiguous to $\hpgde{1/2,1/3,1/4}$ or $\hpgde{2/3,1/4,1/4}$. 

For the claim \refpart{c}, a representative solution of the generic hypergeometric equation with
trivial monodromy is $\hpgoppa21{-n,\,\ell+1}{-n-m}{z}$, with $n,m,\ell$
non-negative integers; see \cite[Theorem~2.4(5)]{Vidunas2007}.  Up~to a
permutation, one has 
that $\alpha=n+m+1$,
$\beta=n+\ell+1$, $\gamma=m+\ell+1$; that is
\begin{equation}
n=\frac{\alpha+\beta-\gamma-1}2,\qquad m=\frac{\alpha+\gamma-\beta-1}2,\qquad
\ell=\frac{\beta+\gamma-\alpha-1}2.
\end{equation}
If one of these three numbers is a negative integer, the singular point
with the largest exponent difference is logarithmic~\cite[\S\,9]{Vidunas2007}. 
If each of the above three numbers is a half-odd-integer, all three singular points are
logarithmic~\cite[\S\,5]{Vidunas2007}.

The assertion~\refpart{d} is a reformulation
of~\cite[Theorem~5.1]{Vidunas2008a}, stated in the context of hypergeometric
equations with either logarithmic solutions or the $\ZZ/2\ZZ$ monodromy group.
\end{proof}

Existence (and uniqueness) of coverings with a given
branching pattern can also be decided on the basis of transformations of
some hypergeometric equations with infinite monodromies.  
The following lemma implies that there are no transformations of $\hpgde{1/2,1/4,1/4}$
into itself of degrees 6, 12, 14, 21, 22, 24, or generally, of degrees $\equiv 3\pmod 4$,
even if suitable branching patterns of these degrees exist. 
Similarly, there are no transformations of $\hpgde{1/2,1/3,1/6}$ or $\hpgde{1/3,1/3,1/3}$
into themselves of degrees 6, 10, 15, 18, 22, 24, or generally, of degrees $\equiv 2\pmod 3$.
This lemma eludicates the non-uniqueness of \PH{44} and  \PH{21}.
\begin{lemma} 
\label{lem:elliptic}
\begin{enumerate}
\item Up to M\"obius transformations, the number of degree-$D$ pull-back
  coverings of $\hpgde{1/2,1/4,1/4}$ into itself is equal to the number of
  integer solutions $(a,b)$ with $a\ge 0$, $b>0$, of the equation
  $D=a^2+b^2$.
\item Up to M\"obius transformations, the number of degree-$D$ pull-back
  coverings of $\hpgde{1/2,1/3,1/6}$ or $\hpgde{1/3,1/3,1/3}$ into itself
  is equal to the number of integer solutions $(a,b)$ with $a\ge 0$, $b>a$,
  of the equation $D=a^2-ab+b^2$.
\end{enumerate}
\end{lemma}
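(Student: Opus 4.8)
The plan is to recognize these three hypergeometric equations as exactly the \emph{Euclidean} ones and to translate self-pull-backs into the arithmetic of the Gaussian and Eisenstein integers. In each case the exponent differences sum to $1$ (namely $\tfrac12+\tfrac14+\tfrac14$, $\tfrac12+\tfrac13+\tfrac16$, $\tfrac13+\tfrac13+\tfrac13$), so the Schwarz map develops $\PP^1_z\setminus\{0,1,\infty\}$ into the Euclidean plane $\CC$ with orientation-preserving monodromy in the group of plane isometries $w\mapsto\zeta w+\tau$, $|\zeta|=1$. These are precisely the three Euclidean triangle groups $(2,4,4)$, $(2,3,6)$, $(3,3,3)$; their orientation-preserving monodromy groups are the wallpaper groups $p4$, $p6$, $p3$, with translation lattices $\Lambda=\ZZ[i]$ in the first case and $\Lambda=\ZZ[\omega]$ in the other two (up to a scaling fixed once and for all). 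First I would record that $\PP^1_z$ is thereby identified with the flat quotient orbifold $\CC/\Gamma^+$, the three singular points being the cone points and $z$ the associated automorphic function.

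Next I would show that a pull-back covering $\varphi\colon\PP^1_x\to\PP^1_z$ with $\varphi^*E\cong E$ is exactly an orbifold self-covering of $\CC/\Gamma^+$, hence is induced by an affine self-map $w\mapsto\lambda w+c$ of $\CC$ normalizing $\Gamma^+$. Holomorphy forces the multiplier $\lambda$ to be a single complex number; since multiplication by $\lambda$ commutes with the rotational parts, the normalization condition reduces to $\lambda\Lambda\subseteq\Lambda$, i.e.\ $\lambda\in\mathrm{End}(\Lambda)=R$, where $R=\ZZ[i]$ or $R=\ZZ[\omega]$. Conversely every $\lambda\in R$ descends to such a covering. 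For the degree I would use that the induced endomorphism of the torus $\CC/\Lambda$ has degree $[\Lambda:\lambda\Lambda]=N(\lambda)=|\lambda|^2$, and that passing to the quotient by the finite point group (which commutes with $\lambda$) preserves the generic fibre count; hence $\deg\varphi=N(\lambda)$.

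The remaining step is the count up to M\"obius transformations. Two multipliers give M\"obius-equivalent coverings precisely when they differ by pre- and post-composition with holomorphic automorphisms of $(\PP^1,E)$; the translation $c$ is absorbed (it amounts to a choice of base point), so the invariant is $\lambda$ modulo the group of \emph{linear parts} of these automorphisms. I would check that in each case this group of linear parts is exactly the unit group $R^*$: for $(2,3,6)$ the point group already is $R^*=C_6$; for $(3,3,3)$ the orbifold automorphism group is an $S_3$ whose linear parts run through all six units; and for $(2,4,4)$ the point group contributes $R^*=C_4$, while the involution swapping the two order-$4$ cone points has trivial linear part (it is realized by the translation $w\mapsto w+\tfrac{1+i}2$). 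Thus the coverings of degree $D$ are counted by $\{\lambda\in R:N(\lambda)=D\}/R^*$.

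Finally I would exhibit the stated fundamental domains. For $R=\ZZ[i]$, writing $\lambda=a+bi$ with $N(\lambda)=a^2+b^2$, the unit group $C_4$ acts by the rotation $(a,b)\mapsto(-b,a)$, for which $\{a\ge0,\;b>0\}$ contains exactly one representative of each nonzero orbit; this gives \refpart{a}. For $R=\ZZ[\omega]$, writing $\lambda=a+b\omega$ with $N(\lambda)=a^2-ab+b^2$, the six units act through the maps induced by $\times(-\omega)$, etc., and a short check shows $\{a\ge0,\;b>a\}$ is a fundamental domain; this gives \refpart{b}, identically for $\hpgde{1/2,1/3,1/6}$ and $\hpgde{1/3,1/3,1/3}$ since they share $R=\ZZ[\omega]$. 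As a sanity check, $D=5$ yields the two solutions $(1,2),(2,1)$ over $\ZZ[i]$ and $D=7$ the two solutions $(1,3),(2,3)$ over $\ZZ[\omega]$, explaining the conjugate pairs \PHfortyfour\ and \PH{21}. I expect the main obstacle to be the rigor of the middle step: justifying that a self-pull-back is \emph{exactly} an affine orbifold endomorphism with multiplier in $R$, and tracking the degree correctly through the point-group quotient, together with the bookkeeping that pins the M\"obius ambiguity down to precisely $R^*$.
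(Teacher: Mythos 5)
Your proof is correct and, at bottom, performs the same count as the paper: elements of $\ZZ[i]$ (resp.\ $\ZZ[\omega]$) of norm $D$ modulo the unit group, with your half-open sectors $\{a\ge0,\ b>0\}$ and $\{a\ge0,\ b>a\}$ being fundamental domains equivalent to the paper's $\arg\in[0,\pi/2)$ and $\arg\in[0,\pi/3)$. The difference lies in how the correspondence between self-pull-backs and ring elements is obtained. The paper simply cites \cite[\S\,8]{Vidunas2009}: solutions of these equations are periods of holomorphic differentials on the CM elliptic curves with $j=1728$ or $j=0$, so self-pull-backs correspond to isogenies, the endomorphism ring is $\ZZ[i]$ or $\ZZ[\omega]$, the degree is the norm, and the trivial and M\"obius transformations are the units. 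You instead derive the same facts from scratch by uniformizing $\PP^1_z$ as the Euclidean orbifold $\CC/\Gamma^+$ for the triangle groups $(2,4,4)$, $(2,3,6)$, $(3,3,3)$ and lifting a self-covering to an affine map $w\mapsto\lambda w+c$ with $\lambda\Lambda\subseteq\Lambda$; the two pictures are equivalent, since the elliptic curve is $\CC/\Lambda$ and $\PP^1_z$ is its quotient by the point group. What your route buys is self-containedness, plus an explicit justification of a point the paper leaves implicit: that the M\"obius ambiguity is \emph{exactly} $R^*$, which for $(2,4,4)$ and $(3,3,3)$ requires checking that the extra orbifold automorphisms permuting cone points of equal order contribute no new linear parts (your observation that the swap of the two order-$4$ cone points lifts to the translation by $(1+i)/2$). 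The steps you flag as delicate --- that a self-pull-back is precisely an orbifold self-covering, that rotation centers go to rotation centers so that $\lambda\Lambda\subseteq\Lambda$ really suffices, and that the degree passes unchanged through the point-group quotient --- are exactly the content of the cited isogeny correspondence, so nothing is missing in substance; and your sanity checks for $D=5$ and $D=7$ reproduce the conjugate pairs \PHfortyfour{} and \PH{21} exactly as the paper uses the lemma.
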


\begin{proof}
According to \cite[\S\,8]{Vidunas2009}, the transformations of
$\hpgde{1/2,1/4,1/4}$ into itself correspond to isogenies of the 
$j=1728$ elliptic curve $y^2=x^3-x$.  The ring of isogenies
is isomorphic to the ring $\ZZ[i]$ of Gaussian integers, and the
degree of a pull-back is equal to the norm $a^2+b^2$ of the
corresponding $a+bi$.  In particular, the trivial and fractional-linear 
transformations correspond to the units $\pm1,\pm i$.  Therefore
one must count $a+bi\in \ZZ[i]$ such that $\left|a+bi\right|^2=D$ and $\arg(a+bi)\in [0,\pi/2)$.

Similarly \cite[\S\,8]{Vidunas2009}, the transformations of
$\hpgde{1/2,1/3,1/6}$ or $\hpgde{1/3,1/3,1/3}$ into itself correspond to
isogenies of the 
$j=0$ elliptic curves $y^2=x^3-1$ or $x^3+y^3=1$.  The
ring of isogenies is isomorphic to the ring of Eisenstein integers
$\ZZ[\omega]$.
The degree of a pull-back  is equal to the norm $a^2-ab+b^2$ of the corresponding
$a+b\omega$.  Trivial or M\"obius transformations correspond to the units
$\pm1,\pm\omega,\pm(\omega+1)$.  Therefore one must count
$a+b\omega\in\ZZ[\omega]$ such that $\left|a+b{\omega}\right|^2=D$ and
$\arg(a+b\omega)\in [0,\pi/3)$.
\end{proof}

\begin{table}
\begin{center}
{
\small
\begin{tabular}{cclcll} 
\hline  
Nonexistent  & Deg. & Branching pattern & Lemma & \multicolumn{2}{c}            {Exponent differences} \\ 
\cline{5-6} covering & $D$ & above singular points && 
\hfil hypergeom. & \hfil pulled-back \\
\hline
\PN{1} & 12 & \branch{\brep26}{\brep34}{7+3+1+1} &
\ref{lem:finitem}\refpart{a}&
$\led{\fr12,\fr13,\fr13}$ & $\led{\fr73,\,\fr13,\fr13}$ \\
\PN{2} &   & \branch{\brep26}{\brep34}{7+2+2+1} &
\ref{lem:logpoint}\refpart{b}&
$\led{\fr12,\fr13,\fr12}$ & $\led{\fr12,\,\fr72}$ \\
\PN{3} &   & \branch{\brep26}{\brep34}{6+4+1+1}  &
\ref{lem:finitem}\refpart{b} &
$\led{\fr12,\fr13,\fr14}$ & $\led{\fr32,\,\fr14,\fr14}$ \\
\PN{4} &   & \branch{\brep26}{\brep34}{6+2+2+2}  &
\ref{lem:logpoint}\refpart{a} &
$\led{\fr12,\fr13,\fr12}$ &  $\led{3}$ \\
\PN{5} &   & \branch{\brep26}{\brep34}{5+4+2+1} &
\ref{lem:finitem}\refpart{d} 
& $\led{\fr12,\fr13,\fr12}$ &  $\led{2,\,\fr12,\,\fr52}$ \\
\PN{6} &   &  \branch{\brep26}{\brep34}{5+3+3+1} &
\ref{lem:logpoint}\refpart{b} &
$\led{\fr12,\fr13,\fr13}$ & $\led{\fr13,\,\fr53}$ \\
\PN{7} &   &  \branch{\brep26}{\brep34}{5+3+2+2} &
\ref{lem:logpoint}\refpart{b}&
$\led{\fr12,\fr13,\fr12}$ & $\led{\fr32,\,\fr52}$ \\
\PN{8} &   &  \branch{\brep26}{\brep34}{4+4+3+1}  &
\ref{lem:logpoint}\refpart{b}&
$\led{\fr12,\fr13,\fr14}$ & $\led{\fr14,\,\fr34}$ \\
\PN{9} &   &  \branch{\brep26}{\brep34}{4+3+3+2} &
\ref{lem:logpoint}\refpart{b}&
$\led{\fr12,\fr13,\fr13}$ & $\led{\fr23,\,\fr43}$ \\
\PN{10} & 10 &  \branch{\brep25}{\brep33+1}{6+3+1}  &
\ref{lem:elliptic}\refpart{b} 
& $\led{\fr12,\fr13,\fr16}$ & $\led{\fr12,\fr13,\fr16}$ \\
\PN{11} &   & \branch{\brep25}{\brep33+1}{6+2+2} &
\ref{lem:logpoint}\refpart{b}&
$\led{\fr12,\fr13,\fr12}$ & $\led{\fr13,\,3}$ \\
\PN{12} &   & \branch{\brep25}{\brep33+1}{4+4+2} &
\ref{lem:logpoint}\refpart{b}&
$\led{\fr12,\fr13,\fr14}$  & $\led{\fr13,\,\fr12}$ \\
\PN{13} &   & \branch{\brep25}{\brep33+1}{4+3+3} &
\ref{lem:logpoint}\refpart{b}&
$\led{\fr12,\fr13,\fr13}$ & $\led{\fr13,\,\fr43}$ \\
\PN{14} & 9 & \branch{\brep24+1}{\brep33}{5+2+2} &
\ref{lem:logpoint}\refpart{b}&
$\led{\fr12,\fr13,\fr12}$ & $\led{\fr12,\,\fr52}$ \\
\PN{15}  &   & \branch{\brep24+1}{\brep33}{4+4+1} &
\ref{lem:logpoint}\refpart{b}&
$\led{\fr12,\fr13,\fr14}$ & $\led{\fr12,\,\fr14}$ \\
\PN{16} &  & \branch{\brep24+1}{\brep33}{3+3+3} & \ref{lem:logpoint}\refpart{a} &
$\led{\fr12,\fr13,\fr13}$ & $\led{\fr12}$ \\
\PN{17} & 8 & \branch{\brep24}{\brep32+2}{4+3+1} & \ref{lem:finitem}\refpart{a} &
$\led{\fr12,\fr13,\fr13}$ & $\led{\fr13,\fr23,\,\fr43}$ \\
\PN{18} &  & \branch{\brep24}{\brep32+2}{4+2+2}  & \ref{lem:logpoint}\refpart{b} &
$\led{\fr12,\fr13,\fr12}$ & $\led{2,\,\fr23}$ \\
\PN{19}  &  & \branch{\brep24}{\brep32+1+1}{5+3}  & \ref{lem:finitem}\refpart{a} &
$\led{\fr12,\fr13,\fr13}$ & $\led{\fr13,\fr13,\,\fr53}$ \\
\PN{20}& 6 & \branch{\brep23}{\brep31+2+1}{3+3} & \ref{lem:logpoint}\refpart{b} &
$\led{\fr12,\fr13,\fr13}$ & $\led{\fr13,\fr23}$ \\
\hline 
\PN{21} & 8 & \branch{\brep24}{\brep42}{5+1+1+1} & \ref{lem:finitem}\refpart{c} &
$\led{\fr12,\fr12,\,1}$ & $\led{2,\,2,\,5}$ \\
\PN{22} &   & \branch{\brep24}{\brep42}{3+2+2+1} & \ref{lem:logpoint}\refpart{b} &
$\led{\fr12,\fr14,\fr12}$ & $\led{\fr12,\,\fr32}$ \\
\PN{23} & 6 & \branch{\brep23}{\brep41+2}{4+1+1} & \ref{lem:logpoint}\refpart{b} &
$\led{\fr12,\fr12,\,1}$  & $\led{4,\,2}$ \\
\PN{24} &   & \branch{\brep23}{\brep41+2}{2+2+2} & \ref{lem:logpoint}\refpart{a} &
$\led{\fr12,\fr14,\fr12}$ & $\led{\fr12}$ \\
\hline 
\PN{25} & 6 & \branch{\brep23}{\brep51+1}{2+2+2} & \ref{lem:logpoint}\refpart{a} &
$\led{\fr12,\fr15,\fr12}$ & $\led{\fr15}$ \\
\hline 
\PN{26} & 6 & \branch{\brep32}{\brep32}{3+1+1+1} & \ref{lem:logpoint}\refpart{a} &
$\led{\fr13,\fr13,\,1}$ & $\led{3}$ \\
\hline 
\PN{27} & 4 & \branch{\brep22}{3+1}{2+2} & \ref{lem:logpoint}\refpart{a} &
 $\led{\fr12,\fr13,\fr12}$  & $\led{1/3}$ \\
 \hline
\end{tabular}
}
\bigskip
\caption{Unrealizable branching patterns, with a proof indication}
\label{tab:nonexist} 
\end{center} 
\end{table}

\subsection{Nonexistence of coverings}
\label{subsec:nonexist}

Tables \ref{tab:clas1}, \ref{tab:clas2} have 27 entries with nonexistent Belyi
coverings.  One branching pattern appears twice among the type $(2,4)$ candidates, 
hence the two tables actually have 26 different branching patterns 
with no covering. They are labeled $\PN{1},\dots,\PN{26}$. The repeating
branching pattern is labelled $\PN{23}$.
Nonexistence is in each case an immediate consequence of some lemma 
in \S \ref{subsec:principallemmas}.  Mostly by specialization of the free parameter, 
one either derives a pull-back from a hypergeometric equation 
to a nonexistent Fuchsian equation, or a pull-back of a hypergeometric equation
with finite monodromy to a hypergeometric equation with infinite monodromy, 
or a nonexistent pull-back of 
$\hpgde{\fr12,\fr13,\fr16}$ into itself. The unrealizable branching patterns and
the applicable lemmas are listed in Table~\ref{tab:nonexist}.

The non-existent covering of Table~\ref{tab:clas} is given the last number \PN{27}.
Its non-existence was already demonstrated at the beginning of this section.

Only for \PN{21} and \PN{23} the used implied transformation 
is not a specialization of a respective Gauss-to-Heun pull-back 
of  the classification in \S \ref{sec:4}. 
To prove \PN{21} by the specialization $\alpha=1/5$, 
one would need to inspect the 10 icosahedral Schwarz types in \cite[\S\,2.7.2]{Erdelyi53}.
The case \PN{23} can be proved using the specialization $\alpha=1/4$ of either
of the two candidate transformations in Table \ref{tab:clas2},
by invoking Lemma \ref{lem:elliptic}\refpart{b}.
Note that to use a hypergeometric equation with only two relevant singularities,
one must ensure that it is of the form $\hpgde{1,\alpha,\alpha}$.
In particular, Lemma \ref{lem:logpoint}\refpart{b} does not apply 
to the branching covering \brep26=\brep34=9+1+1+1 and 
its pull-backs from $\hpgde{1/2,1/3,1}$, because logarithmic singularities
rather than ordinary points appear.
And indeed, the covering $H_1$ exists.

\subsection{The Miranda-Persson classification}
\label{subsec:BH}


The lemmas of \S \ref{subsec:principallemmas}
can be applied to the problem of the existence of Belyi maps that 
would yield semi-stable elliptic fibrations of K3 surfaces with 6~singular fibers,
sorted out by Miranda, Person \cite{Miranda89} and Beukers, Montanus \cite{Beukers2008}. 
The degree of the relevant Belyi maps is 24, and their branching patterns
have the form \branch{\brep2{12}}{\brep38}{$P$}, 
where $P=a$+$b$+$c$+$d$+$e$+$f$ is a partition of~24 with exactly 6~parts.
There are 199 of these branching patterns in total. 
Miranda and Persson \cite{Miranda89} proved that Belyi coverings 
(and elliptic $K3$ surfaces) exist in 112 cases,
and do not exist in the remaining 87 cases. 
Beukers and Montanus ~\cite{Beukers2008} computed 
all\footnote{As pointed~out in the AMS MathSciNet review by David~P. Roberts, 
the table in \cite{Beukers2008} omits one Belyi covering for the
partition 10+6+4+2+1+1. Our computation confirms existence 
of two (rather than one) Belyi coverings for this partition:
\begin{align*}
& \frac{(144x^8+384x^7+1120x^6-784x^3+756x^2-240x+25)^3}
{108\,x^6\,(14x-5)^4\,(4x-1)^2\,(9x^2+24x+70)},\\
& \frac{(144x^8-1536x^7+5248x^6-5568x^5-720x^4+512x^3+192x^2+24x+1)^3}
{108\,(8x+1)^6\,x^4\,(x-3)^2\,(9x^2-42x-5)}.
\end{align*}
The second covering is missing in the Beukers--Montanus list. 
In total, there are 59 branching patterns (among the 112 indicated by Miranda and Persson)
with a unique Belyi map up to M\"obius transformations; 125 Galois orbits of the Belyi maps,
of size at most 4; and 191 different Belyi maps or {\em dessins d\'enfant}. }
those Belyi maps and checked non-existence for the 87 partitions. 

The non-existence proof in \cite{Miranda89} broadly relies on two techniques.
First, Miranda and Persson widen the space of considered branching patterns 
to include partitions $P$ with more than six 
parts\footnote{Therefore coverings with more than 3 branching fibers 
are allowed. Instead of the coverings, permutation representations 
of their monodromy are considered in \cite{Miranda89}. }
and conclude non-existence of coverings for a partition $a_1+\ldots+a_s$
from non-existence for a partition $a_1+\ldots+a_{s-1}+a_s'+a_s''$ with \mbox{$a_s=a_s'+a_s''$},
using \cite[Lemma (2.4)]{Miranda89}. 
Secondly, they get contradicting conclusions about the torsion of the assumed
elliptic surfaces in several non-existing cases.
In~\cite{Beukers2008}, non-existence is concluded either by using
a sum over the characters of $\mathrak{S}_{24}$ that counts coverings 
(not necessarily connected, with some rational weights) with a given branching pattern, 
or by direct computation. 
Let $\Sigma$ denote the counting character sum just mentioned,
given in \cite[Theorem 3.2]{Beukers2008}.
The large table in \cite{Beukers2008} does not list the 47
partitions  (out of the total 87) for which $\Sigma=0$.

Here we show that most of the non-existent cases in the Miranda--Persson list can be
deduced using the methods of \S \ref{subsec:principallemmas}. 
Here are 22 partitions out of the 40 ones with $\Sigma\neq0$ for which the non-existence 
can be proved by using Lemmas \ref{lem:logpoint}, \ref{lem:finitem},~\ref{lem:elliptic} directly:
\begin{center}
14+\brep25, 9+\brep35, 15+\brep24+1, 13+3+\brep24,
12+4+\brep24, 11+5+\brep24, 10+6+\brep24,\\
11+\brep34+1, 10+\brep34+2,  8+4+\brep34,
13+4+\brep23+1,  11+6+\brep23+1,  11+4+3+\brep23, 
10+4+4+\brep23,  9+8+\brep23+1, 9+6+3+\brep23,
8+7+3+\brep23, 8+5+5+\brep23, 7+7+4+\brep23, \\
10+\brep43+1+1, 6+\brep43+3+3, \brep63+3+2+1.
\end{center}
The choice of the starting $\hpgde{\fr12,\fr13,\fr1k}$ that yields a
non-existent covering is indicated by the \brep{k}{n} notation.
Next, here are 22 partitions out of the 47 ones with $\Sigma=0$ to which our lemmas apply directly:
\begin{center}
9+7+\brep24, 7+5+\brep34, 7+\brep44+1, 6+\brep44+2, 5+\brep44+3, \brep54+3+1, \\
9+5+4+\brep23, 7+6+5+\brep23, 
13+\brep33+1+1, 11+\brep33+2+2, 10+4+\brep33+1, 8+6+\brep33+1, 8+5+\brep33+2,
7+7+\brep33+1, 7+6+\brep33+2, 7+4+4+\brep33, 6+5+4+\brep33, 5+5+5+\brep33,
9+\brep43+2+1, 6+5+\brep43+1, 7+\brep43+3+2, 5+5+\brep43+2.
\end{center}
Additionally, the four cases 7+\brep53+1+1, 6+\brep53+2+1, \brep53+4+4+1, \brep53+4+3+2
with \mbox{$\Sigma=0$} are concluded by inspecting the icosahedral hypergeometric equations
in the Schwarz table  \cite[\S\,2.7.2]{Erdelyi53}. In total, this shows 48 out of the 87 cases.

More cases of non-existence can be deduced from implied pull-backs to Fuchsian equations
with 3 non-apparent singularities and a few apparent singularities. These equations are 
gauge ``contiguous" to target hypergeometric equations (with infinite or infinite monodromy)
as the local exponent differences differ at all points by integers. The total shift of the exponent differences,
including those from the difference 1 for ordinary points of hypergeometric equations, 
must be an even integer. In this way, non-existence for the following 7 partitions with $\Sigma\neq0$
can be shown:
\begin{center}
10+6+\brep32+1+1, 9+9+\brep31+1+1+1, 8+6+\brep32+2+2,
7+6+6+\brep31+1+1, 7+6+4+\brep32+1, 6+5+5+\brep32+2,  8+6+\brep42+1+1.
\end{center}
In each case, the apparent singularities are represented by the branching orders that are 
integer multiples of the bracketed numbers. And here are 7 partitions with $\Sigma=0$
that can be handled in the same way:
\begin{center}
9+7+\brep32+1+1,  9+5+\brep32+2+2, 9+4+4+\brep32+1, 
6+6+5+\brep31+2+2, 6+6+4+4+\brep31+1, 8+5+\brep42+2+1, 8+\brep42+3+3+2.
\end{center}
Besides, a pull-back from $\hpgde{1/2,1/3,1/3}$ can be applied 
to show  the non-existence for 9+6+6+1+1+1, with $\Sigma\neq0$. 
It is trickier to combine parts \refpart{c}, \refpart{d} of Lemma \ref{lem:finitem} with gauge shifts.

Of the remaining $87-48-7-7-1=24$ partitions, the following 6 (with $\Sigma\neq0$)
and 11 (with $\Sigma=0$) partitions could be handled with a full knowledge 
of Heun equations  with finite monodromy (that are not classified yet):
\begin{center}
10+8+\brep22+1+1, 13+\brep42+1+1+1, 11+\brep42+2+2+1, 
9+\brep42+3+2+2, 9+\brep52+3+1+1, 
8+\brep52+4+1+1; \\ 
9+6+4+\brep22+1, 8+8+3+\brep22+1, 7+7+5+\brep22+1, 7+6+4+3+\brep22,  \\
10+5+\brep32+2+1, 7+5+5+\brep32+1, 9+5+\brep42+1+1, 7+5+\brep42+2+2,  \\
 8+\brep52+3+2+1, 6+\brep52+4+3+1, 6+\brep52+4+2+2.
\end{center}
Besides, a pull-back from $\hpgde{1/2,1/3,1/4}$ could be then applied to two partitions
with $\Sigma=0$: 12+8+1+1+1+1, 8+8+5+1+1+1.
Other 3 partitions  (with $\Sigma\neq0$)
\begin{center}
12+5+\brep41+1+1+1, 10+\brep51+4+3+1+1, 
9+8+\brep41+1+1+1,   
\end{center}
could be decided by Fuchsian equations with 4+1 singularities 
(i.e., 4 non-apparent and 1 apparent).
There remain only two partitions: 7+7+6+2+1+1 with $\Sigma\neq 0$, and 
7+7+4+3+2+1 with $\Sigma=0$. Their non-existence might be decided by using implied pull-backs 
from $\hpgde{1/2,1/3,1/2}$ to Fuchsian equations with 4+1 singularities and the monodromy 
group $\mathrak{D}_2$ or $\ZZ/2\ZZ$.

\subsection{Uniqueness of coverings}
\label{subsec:uniqueness}

Uniqueness of Gauss-to-Heun transformations (and of their coverings)
with a plausible branching pattern can be concluded from uniqueness of 
specialized Gauss-to-Gauss transformations.
In particular, the coverings \PH{32}\,--\PH{35}, \PH{43}, \PH{47} appear in the classical
hypergeometric transformations listed by Goursat \cite{Goursat1881}.
The coverings \PH{1}, \PH{2}, \PH{7}, \PH{8}, \PH{11}, \PH{18}, \PH{42} 
appear in the hypergeometric transformations from $\hpgde{k,\ell,m}$ with $k,\ell,m$
positive integers satisfying $1/k+1/\ell+1/m<1$. As determined in \cite{Vidunas2005}
(and \cite[\S 9]{Vidunas2009}), these pull-backs are unique up to M\"obius transformations as well.
The coverings \PH{31}, \PH{39}, \PH{41}, \PH{45} apply to hypergeometric transformations
from $\hpgde{1/2,1/2,\alpha}$ with infinite dihedral monodromy \cite[\S 4]{Vidunas2012}. 
The pulled-back equations have infinite cyclic or dihedral monodromy. 
They are, respectively,
\begin{equation*}
\hpgde{1,2\alpha,2\alpha}, \quad \hpgde{1/2,1/2,6\alpha}, \quad
\hpgde{1,4\alpha,4\alpha}, \quad \hpgde{1/2,1/2,5\alpha}.
\end{equation*}
The cyclic covering \PH{48} gives the pull-back
$\hpgde{1,\alpha,\alpha}\pback{4}\hpgde{1,4\alpha,4\alpha}$ of hypergeometric equations
with infinite cyclic monodromy. 

Non-unique Gauss-to-Gauss transformations appear when hypergeometric equations
$\hpgde{k,\ell,m}$ are pulled-back, with $k,\ell,m$ positive integers satisfying $1/k+1/\ell+1/m\ge 1$. 
It the equality holds, these hypergeometric functions are integrals of holomorphic differentials
on $j=1728$ or $j=0$ elliptic curves \cite[\S 8]{Vidunas2009}. 
Lemma \ref{lem:elliptic} counts the coverings \PH{3}, \PH{12}, \PH{21}, \PH{40}, \PH{44}, \PH{46}.
If can be established (by identifying transformations of holomorphic differentials
on the curves $y^2=x^3-1$ and $x^3+y^3=1$, $y^2=x^6+1$) that the transformations from 
$\hpgde{1/2,1/3,1/6}$  to $\hpgde{1/3,1/3,1/3}$ or $\hpgde{2/3,1/6,1/6}$ are compositions 
of the pull-backs of Lemma  \ref{lem:elliptic} with quadratic transformations. 
This applies to the coverings \PH{15}, \PH{19}, \PH{38}.

The hypergeometric equations $\hpgde{k,\ell,m}$ with $1/k+1/\ell+1/m>1$
have finite monodromy groups. The hypergeometric solutions are thereby algebraic functions.
These equations play a fundamental role in the classical heory of algebraic solutions
of second order Fuchsian equations:
\begin{itemize}
\item $\hpgde{1,\,1/k,\,1/k}$, with the finite cyclic monodromy $\mathrak{C}_k$.
\item $\hpgde{1/2,\,1/2,\,1/k}$, with the dihedral projective monodromy $\mathrak{D}_k$.
\item $\hpgde{1/2,\,1/3,\,1/3}$, with the tetrahedral projective monodromy $\mathrak{A}_4$.
\item $\hpgde{1/2,\,1/3,\,1/4}$, with the octahedral projective monodromy $\mathrak{S}_4$.
\item $\hpgde{1/2,\,1/3,\,1/5}$, with the icosahedral projective monodromy $\mathrak{A}_5$.
\end{itemize}
By a celebrated theorem of Klein~\cite{Klein1877}, 
all second order Fuchsian equations on~$\PP^1$ with a finite monodromy group are pull-backs
of one of these standard hypergeometric equations, with the same projective monodromy group. 
These {\em Klein transformations} are known to be unique up to M\"obius transformations
~\cite{Baldassarri79}. However, pull-back transformations between hypergeometric equations
with different projective monodromy need not to be unique. 
Litcanu \cite[Theorem 2.1]{Litcanu2004} noted non-uniqueness of the pull-backs
from $\hpgde{1/2,1/3,1/4}$ to $\hpgde{1/2,1/2,1/2}$ and $\hpgde{1,1/2,1/2}$, 
of degree 6 and 12 respectively.
The non-uniqueness is caused by pairs of different branching patterns though,
e.g.,~2+2+2=3+3=2+2+2 and 2+2+1+1=3+3=4+2. 
The example of  $\hpgde{1/2,1/2,1/5}\pback{10}\hpgde{1/2,1/2,2}$ in \cite[\S 5.4]{Vidunas2012}
shows that non-unique coverings with the same branching pattern easily occur 
for pull-backs to equations with apparent singularities.
Besides, many compositions of
\begin{equation} \label{eq:ico2tet}
\PH{37}: \hpgde{1/2,1/3,1/5} \pback5 \hpgde{1/2,1/3,1/3}
\end{equation}
with transformations from the tetrahedral equation are not unique either, 
because the properly normalized $H_{37}$ is defined over $\QQ(\sqrt{-15})$;
see formula \cite[(50)]{Vidunas2009}. 

In Table \ref{tab:coverings}, the coverings
\PH{9}, \PH{10}, \PH{13}, \PH{16}, \PH{22}, \PH{24} give Klein transformations
of $\hpgde{1/2,\,1/3,\,1/5}$ to the following hypergeometric equations,
respectively:
\begin{align*}
\hpgde{1/3,1/5,4/5},\quad \hpgde{1/3,2/5,3/5},\quad \hpgde{1/2,1/5,3/5},\\
\hpgde{2/3,1/5,2/5},\quad \hpgde{1/2,1/3,2/5},\quad \hpgde{1/3,2/3,1/5}.
\end{align*} 
This illustrates the Schwarz types VIII, XV, IX, X, XIV, XII, respectively. 
The other icosahedral Schwarz types are represented 
by $\hpgde{1/3,1/3,2/5}$, $\hpgde{1/5,1/5,4/5}$, $\hpgde{2/5,2/5,2/5}$, 
and the standard $\hpgde{1/2,1/3,1/5}$. 
Uniqueness of the coverings \PH{14}, \PH{17}, \PH{23}, \PH{29}, \PH{30}
is established by noting these Klein transformations: 
\begin{align*}
\hpgde{1/2,1/3,1/3}\pback{9} \hpgde{1/2,2/3,4/3},\quad 
\hpgde{1/2,1/2,1/3}\pback{8} \hpgde{3/2,3/2,2/3},\\
\hpgde{1/2,1/3,1/4}\pback{7} \hpgde{1/2,1/3,3/4},\quad
\hpgde{1/2,1/3,1/4}\pback{5} \hpgde{1/2,2/3,1/4},\\ 
\hpgde{1/2,1/3,1/3}\pback{5} \hpgde{1/2,2/3,2/3}.
\end{align*} 

These considerations of reduction to hypergeometric transformations do not
immediately establish uniqueness of 10 coverings in Table \ref{tab:coverings}.
Those coverings induce rather attractive transformations between hypergeometric equations
with different finite monodromy. In particular, \PH{6}, \PH{28} pull-back 
$\hpgde{1/2,1/3,1/3}$ 
to $\hpgde{1,1,1}$ and $\hpgde{1,1/2,1/2}$; 
then \PH{5}, \PH{20}, \PH{25}, \PH{27}, \PH{36} transform 
$\hpgde{1/2,1/3,1/4}$ to $\hpgde{1,1/2,1/2}$, $\hpgde{1,1/3,1/3}$, $\hpgde{1/2,1/3,2/3}$,
$\hpgde{1/2,1/2,1/2}$, $\hpgde{1/2,1/2,1/3}$, respectively;
and finally, \PH{4}, \PH{26}, \PH{37} pull-back $\hpgde{1/2,1/3,1/5}$ to 
$\hpgde{1,1/5,1/5}$, \mbox{$\hpgde{1/2,1/2,1/5}$} and (\ref{eq:ico2tet}).
Many of the coverings pull-back $\hpgde{1/2,1/3,1/2}$ or other 
dihedral hypergeometric equations to hypergeometric
equations with simpler dihedral or cyclic monodromy.
 
\begin{acknowledgement}
The authors are very grateful to Robert S. Maier for sharing his knowledge of
literature and ongoing developments related to the subject of this article, and
a coordination discussion.

R.V.~ is supported by  supported by the JSPS grant No 20740075.
Calculations by G.F. were obtained in the Interdisciplinary Centre for
Mathematical and Computational Modelling (ICM), Warsaw University, under
grant no.\ G34-18.  R.V. and~G.F. are grateful for the hospitality provided
by IMPAN and the organizers of the XVth Conference on Analytic Functions
and Related Topics, held in Chelm in July 2009.
\end{acknowledgement}




\end{document}